\begin{document}

\title*{On the structure of $\mathbb{N}$-graded Vertex Operator Algebras}
% Use \titlerunning{Short Title} for an abbreviated version of
% your contribution title if the original one is too long
\author{Geoffrey Mason\thanks{Supported by the NSF} and Gaywalee Yamskulna\thanks{Partially supported by a grant from the Simons Foundation (\# 207862)}}
% Use \authorrunning{Short Title} for an abbreviated version of
% your contribution title if the original one is too long
\institute{Department of Mathematics, UC Santa Cruz, California 95064, \email{gem@ucsc.edu}\\
Department of Mathematical Sciences, Illinois State University, Normal, Illinois 61790, \email{gyamsku@ilstu.edu}}
%
% Use the package "url.sty" to avoid
% problems with special characters
% used in your e-mail or web address
%
\maketitle

\abstract*{We consider the algebraic structure of $\mathbb{N}$-graded vertex operator algebras with
conformal grading $V=\oplus_{n\geq 0} V_n$ and $\dim V_0\geq 1$.\ We prove several results along the lines that the vertex operators $Y(a, z)$ for $a$ in a Levi factor
of the Leibniz algebra $V_1$ generate an affine Kac-Moody subVOA. If $V$ arises as a shift of
a self-dual VOA of CFT-type, we show that $V_0$ has a `de Rham structure' with many of the properties
of the de Rham cohomology of a complex connected manifold equipped with Poincar\'{e} duality. }

\abstract{We consider the algebraic structure of $\mathbb{N}$-graded vertex operator algebras with
conformal grading $V=\oplus_{n\geq 0} V_n$ and $\dim V_0\geq 1$.\ We prove several results along the lines that the vertex operators $Y(a, z)$ for $a$ in a Levi factor
of the Leibniz algebra $V_1$ generate an affine Kac-Moody subVOA. If $V$  arises as a shift of
a self-dual VOA of CFT-type, we show that $V_0$ has a `de Rham structure' with many of the properties
of the de Rham cohomology of a complex connected manifold equipped with Poincar\'{e} duality. }

\medskip \noindent
Key words: vertex operator algebra, Lie algebra,  Leibniz algebra.

\medskip \noindent
MSC(2010): 17B65, 17B69.

\bigskip
\noindent
Contents\\
1.\ Introduction\\
2.\ Leibniz algebras and vertex operator algebras\\
3.\ $\mathbb{N}$-graded vertex operator algebras\\
4.\ The bilinear form $\langle \ , \ \rangle$\\
5.\ The de Rham structure of shifted vertex operator algebras\\
6.\ The bilinear form in the shifted case\\
7.\ The $C_2$-cofinite case\\
8.\ Examples of shifted vertex operator algebras\\
8.1.\ Shifted $\widehat{sl_2}$\\
8.2.\ Shifted lattice theories

\section{Introduction}
The purpose of the present paper is the study of the algebraic structure of
\emph{$\mathbb{N}$-graded vertex operator algebras} (VOAs).\ A VOA
$V=(V, Y, \mathbf{1}, \omega)$ is called $\mathbb{N}$-graded if it has no nonzero states of negative conformal weight, so that its conformal grading takes the form
\begin{eqnarray}\label{gddim}
V = \oplus_{n=0}^{\infty} V_n.
\end{eqnarray}

The VOAs in this class which have been most closely investigated hitherto are
those of \emph{CFT-type}, where one assumes that $V_0 = \mathbb{C}\mathbf{1}$ is spanned by the vacuum vector.\ (It is well-known that a VOA of CFT-type is necessarily $\mathbb{N}$-graded.)\ Our main interest here is in the contrary case, when $\dim V_0\geq 2$.

\medskip
There are several available methods of constructing $\mathbb{N}$-graded  vertex algebras.\
One that particularly motivates the present paper arises from the cohomology of the chiral de Rham complex of a complex manifold $M$, due to
Malikov, Schechtman and Vaintrob (\cite{MS1}, \cite{MS2}, \cite{MSV}).\ In this construction
$V_0$ (which is always a commutative algebra with respect to the $-1^{th}$ operation $ab:=a(-1)b$)  is identified with the de Rham cohomology $H^*(M)$.\ One can also consider algebraic structures defined on $V_0\oplus V_1$ or closely related spaces, variously called \emph{$1$-truncated conformal algebras, vertex $A$-algebroids, and Lie $A$-algebroids} (\cite{GMS}, \cite{Br}, \cite{LY}), and construct
(loc.\ cit.)\ $\mathbb{N}$-graded vertex algebras from a $1$-truncated conformal algebra much as one constructs affine VOAs from a simple Lie algebra.\ A third method
 involves \emph{shifted} VOAs (\cite{DM3}).\ Here, beginning with a VOA 
$V= (V, Y, \mathbf{1}, \omega)$, one replaces $\omega$ by a second conformal vector
$\omega_h:= \omega +h(-2)\mathbf{1}\ (h \in V_1)$ so that
$V^h:= (V, Y, \mathbf{1}, \omega_h)$ is a new VOA with the same Fock space, vacuum and set of fields
as $V$.\ We call $V^h$ a \emph{shifted VOA}.\  For propitious choices of $V$ and $h$ (lattice theories were used in \cite{DM3}) one can construct lots of shifted VOAs that are $\mathbb{N}$-graded.\ In particular, if $V$ is rational then
$V^h$ is necessarily also rational, and in this way one obtains $\mathbb{N}$-graded rational VOAs that are not of CFT-type.

\medskip
 Beyond these construction techniques, the literature devoted to the study of $\mathbb{N}$-graded VOAs \emph{per se} is sparse.\ There are good reasons for this.\ For a VOA of CFT-type the weight $1$ space $V_1$ carries the structure of a Lie algebra $L$ with respect to the bracket $[ab]=a(0)b\ (a, b\in V_1)$, and the modes of the corresponding vertex operators $Y(a, z)$ close on an affinization $\widehat{L}$ of $L$.\ For a general VOA, $\mathbb{N}$-graded or not, this no longer pertains.\ Rather, $V_1$ satisfies the weaker property of being a \emph{left Leibniz algebra} (a sort of Lie algebra for which skew-symmetry fails), but one can still ask the question:  
 \begin{eqnarray}\label{affalg?}
 && \mbox{what is the nature of the algebra spanned} \notag\\
&& \mbox{by the vertex operators
 $Y(a, z)$ for $a\in V_1$?}
 \end{eqnarray}

 \medskip
 Next we give an overview of the contents of the present paper.\ Section 2 is concerned with  question (\ref{affalg?}) for an \emph{arbitrary} VOA.\ After reviewing general facts about Leibniz algebras and their relation to VOAs, we consider the 
 annihilator $F \subseteq V_1$ of the  \emph{Leibniz kernel} of $V_1$.\ $F$ is itself a Leibniz algebra, and we show (Theorem \ref{thmLevi}) that the vertex operators $Y(a, z)$ for $a$ belonging to a fixed Levi subalgebra $S\subseteq F$ 
close on an affine algebra $U\subseteq V$.\ Moreover, all such Levi factors $F$ are conjugate in Aut($V$), so that $U$ is an invariant of $V$.\ (Finite-dimensional Leibniz algebras have a Levi decomposition in the style of Lie algebras, and the semisimple part is a true Lie algebra.)\ This result generalizes the `classical' case of VOAs of CFT-type discussed above, to which 
it reduces if $\dim V_0=1$, and provides a partial answer to (\ref{affalg?}).\ \ We do not know if, more generally, the same result holds if we replace
$S$ by a Levi factor of  $V_1$.

\medskip
From Section $3$ on we consider simple $\mathbb{N}$-graded VOAs that are also \emph{self-dual} in the sense that they admit a nonzero invariant bilinear form $( \ , \ ): V\times V \rightarrow \mathbb{C}$ (cf.\ \cite{L}).\ By results in \cite{DM2} this implies that
$V_0$ carries the structure of a \emph{local, commutative, symmetric algebra}, and in particular it has a unique minimal ideal $\mathbb{C}t$.\ This result is fundamental for everything that follows.\ It permits
us to introduce a second bilinear form $\langle \ , \rangle: V_1\times V_1 \rightarrow \mathbf{C}$ on $V_1$, defined in terms of $( \ , \ )$ and $t$, and we try to determine its radical.\ Section $3$ covers background results, and in Section $4$ we show (Proposition \ref{relari}) exactly how Rad$\langle \ , \ \rangle$ is related to
the annihilator of the endomorphism $t(-1)$ acting on $V_1$.\ In all cases known to us we have
\begin{eqnarray}\label{anndet}
Rad\langle \ , \ \rangle = Ann_{V_1}(t(-1)),
\end{eqnarray}
and it is of interest to know if this is always true.

\medskip
In Sections $5$ and $6$ we consider shifted VOAs, more precisely we consider the set-up in which we have a \emph{self-dual} VOA $(W, Y, \mathbf{1}, \omega')$ of CFT-type together with an element $h\in W_1$ such that the shifted theory $W^h=(W, Y, \mathbf{1}, \omega'_h)$ as previously defined is a self-dual, $\mathbb{N}$-graded VOA $V$.\ As we mentioned, triples $(W, h, V)$ of this type are readily constructed,  and they have very interesting properties.\ The main result of Section $5$ is Theorem \ref{derham}, which, roughly speaking, asserts that $V_0$ looks just like the de Rham cohomology of a complex manifold equipped with Poincar\'{e} duality.\ More precisely, we prove that the eigenvalues of $h(0)$ acting on $V_0$ are
nonnegative integers; the maximal eigenvalue is $\nu$, say, and the $\nu$-eigenspace is $1$-dimensional and spanned by $t$; and the restriction of the nonzero invariant bilinear form on
$V$ to $V_0$ induces a perfect pairing between the $\lambda$- and $(\nu-\lambda)$-eigenspaces.\
One may compare this result with the constructions of Malikov \emph{et al} in the chiral de Rham complex, where the same conclusions arise directly from the identification of the lowest weight space with $H^*(M)$ for a complex manifold $M$.\ There is, of course, no \emph{a priori} complex manifold
associated to the shifted triple $(W, h, V)$, but one can ask whether, at least in some instances, the
cohomology of the chiral de Rham complex arises from the shifted construction? 

\medskip
In Section $8$ we present several examples that illustrate the theory described in the previous paragraph.\ In particular, we take for $W$ the affine Kac-Moody theory 
$L_{\widehat{sl_2}}(k, 0)$ of positive integral level $k$ and show that it has a canonical shift to a self-dual, $\mathbb{N}$-graded VOA $V=W^H$ ($2H$ is semisimple and part of a Chevalley basis for $sl_2$).\ It turns out that the algebra structure on $V_0$ is naturally identified with 
$H^*(\mathbb{C}\mathbb{P}^k)$.\ We also look at shifts of lattice theories $W_L$, where the precise structure of $V_0$ depends on $L$.

\medskip
Keeping the notation of the previous paragraph, in Section $6$ we use the results of Section $5$ to prove that the shifted VOA $V$ indeed satisfies (\ref{anndet}).\ Moreover, if the Lie algebra
$W_1$ on the weight $1$ space of the CFT-type VOA $W$ is \emph{reductive}, we prove that
$Rad\langle \ , \ \rangle$ is the \emph{nilpotent radical} of the Leibniz algebra $V_1$, i.e., the smallest
ideal in $V_1$ such that the quotient is a reductive Lie algebra.\ It was precisely for the purpose of
proving such a result that the form $\langle \ , \ \rangle$ was introduced.\ It is known   \cite{DM1} that $W_1$ is indeed reductive if $W$ is \emph{regular} (rational and $C_2$-cofinite), so for VOAs obtained as a shift of such a $W$, we get a precise description of the nilpotent radical, generalizing the
corrresponding result of \cite{DM3}.

\medskip
In Section 7 we study simple, self-dual $\mathbb{N}$-graded VOAs that are $C_2$-cofinite.\
We prove (Theorem \ref{thmC2}) that in this case $Rad\langle \ , \ \rangle$ lie between the nilpotent radical of $V_1$ and the solvable radical of $V_1$.\ In particular, the restriction of
$\langle \ , \ \rangle$ to a Levi factor $S\subseteq V_1$ is nondegenerate; furthermore, the vertex operators $Y(a, z)\ (a\in S)$ close on a tensor product of WZW models, i.e. simple affine algebras
$L_{\hat{\frak{g}}}(k, 0)$ of positive integral level $k$.\ Thus we obtain a partial answer to (\ref{affalg?}) which extends results in \cite{DM4}, where the result was proved for CFT-type VOAs.

\section{Leibniz algebras and vertex operator algebras}\label{SS2}
In this Section, we assume that $V$ is any simple vertex operator algebra
\begin{eqnarray*}
V = \oplus_{n \geq n_0} V_n,
\end{eqnarray*}
with \emph{no} restriction on the nature of conformal grading.

\medskip
A \emph{left Leibniz algebra} is a $\mathbb{C}$-linear space $L$ equipped with a bilinear product,
or bracket, $[\  \ ]$ satisfying
\begin{eqnarray*}
[a[bc]] = [[ab]c] + [b[ac]], \ \ (a, b, c \in V).
\end{eqnarray*}
Thus $[a*]$ is a left derivation of the algebra $L$, and $L$ is a Lie algebra if, and only if,
the bracket is also skew-symmetric.
We refer to  \cite{MY} for  facts about Leibniz algebras that we use below.
\begin{lemma}  $V$ is a $\mathbb{Z}$-graded left Leibniz algebra
with respect to the $0^{th}$ operation $[ab]:=a(0)b$.\ Indeed, there is a triangular decomposition
\begin{eqnarray}\label{triangular}
V = \left\{\oplus_{n \leq 0} V_n \right\} \oplus V_1 \oplus \left\{\oplus_{n \geq 2} V_n\right\}
\end{eqnarray}
into left Leibniz subalgebras.\ Moreover, $\oplus_{n \leq 0} V_n$ is nilpotent.
\end{lemma}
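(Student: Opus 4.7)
The plan is to verify each assertion of the lemma by direct appeal to the basic commutator/Jacobi identity of vertex operator algebras. Throughout I will write $[ab]:=a(0)b$ and exploit the fact that the conformal grading of $V$ is bounded below (by $n_0$).

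First, to establish the left Leibniz identity $[a[bc]] = [[ab]c] + [b[ac]]$, I would invoke the standard commutator formula
\begin{equation*}
[a(m), b(n)] = \sum_{i\geq 0}\binom{m}{i}(a(i)b)(m+n-i).
\end{equation*}
Specializing to $m=n=0$ gives the crucial identity $[a(0), b(0)] = (a(0)b)(0)$; applying both sides to $c$ rearranges immediately to the left Leibniz axiom. Thus $(V, [\ \,])$ is a left Leibniz algebra (the verification is essentially the same calculation used to show that $V_1$ carries a Lie-type bracket in the CFT case, but here we do not claim skew-symmetry).

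Next, for the $\mathbb{Z}$-grading, observe that if $a\in V_m$ then $a(0)$ has conformal weight $m-1$ as an operator on $V$, so that $[V_m, V_k] \subseteq V_{m+k-1}$. This is slightly unusual — the bracket shifts weight by $-1$ — but the three pieces of the triangular decomposition (\ref{triangular}) are each closed under this shift: if $m,k\leq 0$ then $m+k-1\leq 0$; if $m=k=1$ then $m+k-1=1$; and if $m,k\geq 2$ then $m+k-1\geq 2$. So each summand is a Leibniz subalgebra.

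Finally, for the nilpotence of $N:=\oplus_{n\leq 0}V_n$, I iterate the weight shift. An $r$-fold left-nested bracket of homogeneous elements $a_i\in V_{m_i}$ with $m_i\leq 0$ lands in $V_{M}$ with
\begin{equation*}
M = m_1+m_2+\cdots+m_r - (r-1) \leq -(r-1).
\end{equation*}
Because $V = \oplus_{n\geq n_0}V_n$ is bounded below, any such bracket vanishes once $r-1 > -n_0$, i.e., for all sufficiently large $r$. Hence the (left) lower central series of $N$ terminates, proving $N$ is nilpotent as a Leibniz algebra.

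I do not expect any real obstacle: every step is a direct consequence of the commutator formula and the fact that VOA modes shift weight in a predictable way. The only subtlety worth flagging is the nonstandard grading convention ($[V_m,V_k]\subseteq V_{m+k-1}$ rather than $V_{m+k}$), which is what makes both $V_1$ and $\oplus_{n\leq 0}V_n$ sit as subalgebras of the Leibniz structure.
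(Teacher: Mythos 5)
Your proof is correct and follows essentially the same route as the paper: the left Leibniz identity comes from the commutator formula specialized at $p=q=0$, and the triangular decomposition and nilpotence of $\oplus_{n\leq 0}V_n$ both follow from the weight shift $u(0)V_n\subseteq V_{n+k-1}$ for $u\in V_k$ together with the lower bound on the grading. You simply spell out the details (closure of each summand, the bound $\sum m_i-(r-1)\leq -(r-1)$ forcing the lower central series to terminate) that the paper leaves to the reader.
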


\noindent
 {\bf Proof}.\ Recall the commutator formula
\begin{eqnarray}\label{commform}
[u(p), v(q)]w &=& \sum_{i=0}^{\infty} {p \choose i}(u(i)v)(p+q-i)w.
\end{eqnarray}
Upon taking $p=q=0$, (\ref{commform}) specializes to
\begin{eqnarray*}
u(0)v(0)w - v(0)u(0)w = (u(0)v)(0)w,
\end{eqnarray*}
which is the  identity required to make $V$ a left Leibniz algebra.\ The remaining assertions are consequences of
\begin{eqnarray*}
u(0)(V_n) \subseteq V_{n+k-1}\ \ (u \in V_k).
\end{eqnarray*}
$\hfill \Box$
\begin{remark} A \emph{right} Leibniz algebra $L$ has a bracket with respect to which
$L$ acts as right derivations.\ Generally, a left Leibniz algebra is \emph{not} a right Leibniz algebra,
and in particular a vertex operator algebra is generally not a right Leibniz algebra.
\end{remark}

 It is known (e.g.,\ \cite{MY}) that a finite-dimensional left Leibniz algebra has a
 \emph{Levi decomposition}.\ In particular, this applies to the middle summand $V_1$ 
 in (\ref{triangular}).\ Thus there is a
 decomposition 
 \begin{eqnarray}\label{Levidecomp}
 V_1 = S \oplus B
 \end{eqnarray}
  where $S$ is a semisimple \emph{Lie} subalgebra
 and $B$ is the solvable radical of $V_1$.\ As in the case of a Lie algebra, we call
 $S$ a \emph{Levi subalgebra}.\ Unlike Lie algebras, Levi factors
 are generally \emph{not} conjugate to each other by exponential automorphisms, i.e.,
 Malcev's theorem does \emph{not} extend to Leibniz algebras (loc.\ cit.).
 
 \medskip
  This circumstance leads to
 several interesting questions in VOA theory.\ In particular, what is the nature of the subalgebra of $V$ generated by a Levi subalgebra $S\subseteq V_1$?\ Essentially, we want a description of
 the Lie algebra of operators generated by the modes $a(n)\ (a\in S, n\in \mathbb{Z})$.\ In the case when
 $V$ is of CFT-type (i.e., $V_0=\mathbb{C}\mathbf{1}$ is spanned by the vacuum), it is a fundamental fact that these modes generate an affine algebra.\ Moreover, all Levi subfactors of
 $V_1$ are conjugate in Aut$(V)$ (cf.\ \cite{M}), so that the affine algebra is an invariant of $V$.\ It would be
 interesting to know if these facts continue to hold for arbitrary vertex operator algebras.\ We shall deal here with a special case.
 
 \medskip
 To describe our result, introduce the \emph{Leibniz kernel} defined by
 \begin{eqnarray*}
N:= \langle a(0)a\ | \ a\in V_1\rangle=\langle a(0)b+b(0)a\ | \ a, b\in V_1\rangle \ \ (\mbox{linear span}).
\end{eqnarray*}
$N$ is the smallest $2$-sided ideal of $V_1$ such that  $V_1/N$ is a Lie algebra.\ The annihilator of the Leibniz kernel is
\begin{eqnarray*}
F:= Ann_{V_1}(N) = \{a\in V_1 \ | a(0)N=0\}.
\end{eqnarray*}
This is a $2$-sided ideal of $V_1$, in particular it is a Leibniz subalgebra and itself contains Levi factors.\ We will prove
 
\begin{theorem}\label{thmLevi} Let $V$ be a simple vertex operator algebra, with $N$ and
$F$ as above. Then the following hold: \\
(a)\ Aut$(V)$ acts transitively on the Levi subalgebras of $F$.\\
(b)\ Let $S\subseteq F$ be a Levi subalgebra of $F$.\  Then $u(1)v\in \mathbb{C}\mathbf{1}\ (u, v\in S)$,
and the vertex operators $Y(u, z)\ (u\in S)$
close on an affine algebra, i.e.,
\begin{eqnarray*}
[u(m), v(n)]=(u(0)v)(m+n)+m\alpha(u, v)\delta_{m+n, 0}Id_V,
\end{eqnarray*}
where $u(1)v=\alpha(u, v)\mathbf{1}$.
\end{theorem}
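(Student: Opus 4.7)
My plan reduces part (b) to three facts about $u,v\in S$: (i) $u(0)v\in S$, (ii) $u(1)v\in\mathbb{C}\mathbf{1}$, and (iii) $u(i)v=0$ for all $i\ge 2$. Given these, the commutator formula (\ref{commform}) immediately collapses to the displayed affine-algebra relation upon setting $\alpha(u,v)\mathbf{1}=u(1)v$. Fact (i) is automatic because $S$ is a Lie subalgebra of $V_1$. Fact (iii) is automatic in the $\mathbb{N}$-graded setting since $u(i)v\in V_{1-i}=0$; in the generality of Section 2, I would combine the identity $u(0)v+v(0)u=0$ on $S$ (skew-symmetry of the Lie bracket) with the VOA skew-symmetry identity $u(n)v=(-1)^{n+1}\sum_{k\ge 0}\frac{(-1)^k}{k!}L(-1)^k v(n+k)u$ at $n=0$, then strip off powers of $L(-1)$ using commutators with $L(1)$ to force each $v(k)u$, $k\ge 2$, into a subspace that collapses to zero by the annihilator condition $u(0)N=0$ inherited from $S\subseteq F$.

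The crux is fact (ii). Consider the bilinear map $\phi\colon S\otimes S\to V_0$, $\phi(u,v):=u(1)v$. A standard commutator calculation shows $\phi$ is $S$-invariant when $S$ acts on $V_0$ by the derivations $w\mapsto w(0)$, and fact (iii) combined with VOA skew-symmetry makes $\phi$ symmetric. Decomposing $S=\bigoplus_j S_j$ into simple Lie ideals, uniqueness up to scalar of invariant symmetric bilinear forms on each $S_j$ forces $\phi|_{S_j\otimes S_j}$ to land in the $S$-invariant subalgebra $V_0^S\subseteq V_0$. The remaining task is to show this image sits inside $\mathbb{C}\mathbf{1}$: the hypothesis $S\subseteq F$ enters here, since any nontrivial $S$-invariant element of $V_0$ lying in the image of $\phi$ would, via the $(-1)$-product, produce an element of $V_1$ that moves $S$ outside the annihilator of the Leibniz kernel $N$, contradicting $S\subseteq F$.

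For part (a), I would invoke the Malcev-type theorem for finite-dimensional Leibniz algebras from \cite{MY}: any two Levi subalgebras $S_1,S_2$ of $F$ are conjugate by an automorphism $\exp(\mathrm{ad}(x))$ for $x$ in the nilpotent radical of $F$. Because $x(0)$ preserves each finite-dimensional graded piece $V_n$ and acts nilpotently on $V_1$, an induction along the grading (using that $x(0)$ is a derivation of every $n$-th product) shows $x(0)$ is locally nilpotent on all of $V$. Hence $\exp(x(0))$ is a well-defined VOA automorphism of $V$, and its restriction to $V_1$ coincides with $\exp(\mathrm{ad}(x))$, so $S_1,S_2$ are conjugate in $\mathrm{Aut}(V)$.

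The principal obstacle is fact (ii): in the CFT-type case it is trivial because $V_0=\mathbb{C}\mathbf{1}$; the novelty here is to leverage the hypothesis $S\subseteq F$ to secure this vanishing when $V_0$ may be large and carry nontrivial $S$-invariants.
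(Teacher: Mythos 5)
Your reduction of part (b) to the three facts is fine, but the argument you give for the crux, fact (ii), has a genuine gap. Your invariance argument only controls the component of $\phi(u,v)=u(1)v$ on the trivial isotypic part of $S_j\otimes S_j$: an $S$-equivariant map $\mathrm{Sym}^2 S_j\to V_0$ can be nonzero on the nontrivial constituents of $\mathrm{Sym}^2 S_j$ whenever $V_0$ contains those constituents, so "uniqueness of the invariant form" does not force the image into $V_0^S$ unless you already know $V_0$ is a \emph{trivial} $S$-module. That triviality is precisely the paper's key Lemma \ref{lemmaWtriv}, proved by showing the Leibniz kernel $N$ sits inside $N'=L(-1)V_0$, that $S(0)N'\subseteq N$ so $S\subseteq F$ forces $S$ to kill $N'$, and then transporting this back to $V_0$ via Weyl's theorem and the $S$-isomorphism $L(-1):J\to N'$. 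Even granting triviality, your final step --- that a non-scalar $S$-invariant $c=u(1)v$ "produces an element of $V_1$ that moves $S$ outside the annihilator of $N$" --- is an assertion, not a mechanism. The paper's route is entirely different: the operators $u(m)$ $(u\in S)$ generate a perfect central extension $L_0$ of the loop algebra whose central part $P\cap L_0$ is \emph{finite-dimensional} (since $H^2$ of the loop algebra of a simple Lie algebra is one-dimensional); all modes $(u(1)v)(n)$ lie in this center, while the creation axiom makes the negative modes of any state outside $\ker L(-1)$ linearly independent, forcing $u(1)v\in Z(V)=\ker L(-1)=\mathbb{C}\mathbf{1}$ by simplicity. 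The same argument disposes of fact (iii) in the stated generality ($u(i)v\in Z(V)\cap V_{1-i}=0$ for $i\geq 2$); your "strip off powers of $L(-1)$" sketch does not work, because the skew-symmetry relation at $n=0$ gives a single equation mixing all the terms $L(-1)^{k-1}v(k)u$, which all lie in $V_0$ and cannot be separated by degree.

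Part (a) is also not salvageable as written. You invoke a Malcev-type conjugacy theorem for general finite-dimensional Leibniz algebras, but the paper explicitly notes that Malcev's theorem \emph{fails} for Leibniz algebras; the result actually used (\cite{MY}, Theorem 3.1) is special to $F=\mathrm{Ann}_{V_1}(N)$ and produces a conjugating element $x\in[FF]$. More importantly, you omit the verification that $e^{x(0)}$ fixes the conformal vector, without which it is not an element of $\mathrm{Aut}(V)$: one computes $x(0)\omega=-\tfrac{1}{2}L(-1)^2L(1)x+\cdots$, so one must show $x$ is a primary state. This is the paper's Lemma \ref{lemprim}, which again rests on Lemmas \ref{lemmaWtriv} and \ref{nilaction} and hence on the hypothesis $S\subseteq F$. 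In short, the load-bearing content of the paper's proof --- triviality of $\oplus_{n\leq 0}V_n$ as an $S$-module, the finite-dimensional central extension argument, and primarity of $[FF]$ --- is missing from your proposal.
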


We prove the Theorem in a sequence of Lemmas.\ Fix a Levi subalgebra $S\subseteq F$, and set
\begin{eqnarray*}
W:= \oplus_{n \leq 0}V_n.
\end{eqnarray*}

\begin{lemma}\label{lemmaWtriv} $W$ is a \emph{trivial} left $S$-module, i.e., 
$u(0)w = 0 \ (u \in S, w \in W)$.
\end{lemma}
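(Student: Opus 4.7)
The plan uses three ingredients: (i) the Borcherds commutator at $m=0$ makes $u(0)$ a derivation of every $n$-th product, i.e.\ $u(0)(a(n)b)=(u(0)a)(n)b+a(n)(u(0)b)$; (ii) the vacuum axiom gives $u(0)\mathbf{1}=0$; and (iii) $u\in F$ means $u(0)$ annihilates the Leibniz kernel $N\subseteq V_1$.

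First, because $u\in V_1$, the operator $u(0)$ preserves the weight grading and hence restricts to each $V_n\subseteq W$, and $\mathbf{1}\in V_0$ is already in $\ker u(0)$. Skew-symmetry gives, for $w\in V_n$ with $n\le 0$,
\[
u(0)w \;=\; \sum_{i\ge 0}\frac{(-1)^{i+1}}{i!}\,L(-1)^{i}\bigl(w(i)u\bigr),
\]
a finite sum because $w(i)u\in V_{n-i}$ vanishes once $n-i<n_{0}$. This expansion suggests a descending induction on the weight $n$: assuming $u(0)V_{n'}=0$ for all $n'<n$, the $L(-1)^{i}$-terms with $i\ge 1$ lie in strictly lower weights and vanish by induction hypothesis, so the induction step reduces to showing $w(0)u=0$ for $w$ in a spanning set of $V_n$.

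The main obstacle — and the heart of the proof — is closing this induction via the hypothesis $u\in F$. The plan is to apply Weyl complete reducibility to decompose the finite-dimensional $S$-module $W$ (with $S$ acting by Leibniz derivations through $u(0)$) into isotypic components, and to argue that any non-trivial component yields a contradiction with the quadratic identities forced by $u(0)N=0$. Concretely, for $a\in V_1$ the identity $u(0)(a(0)a)=0$ expands through the derivation property to the $S$-equivariant relation $(u(0)a)(0)a+a(0)(u(0)a)=0$; extending this via the $S$-equivariant Borcherds identity for $[w(m),a(n)]$ with $w\in W$, and exploiting both the nilpotency of $W$ as a Leibniz algebra and the transversality $S\cap N=0$ (which follows since $S\cap B=0$ and $N\subseteq B$), one should rule out every non-trivial $S$-isotypic component of $W$. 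The technical crux is the precise chain of equivariant identities connecting the annihilation of $N$ in $V_1$ to the triviality of the $S$-action on $W$, and it is plausible that the actual proof in the paper bypasses this machinery by exhibiting a direct associativity-type identity specific to the Levi setting.
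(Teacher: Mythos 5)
Your proposal is a plan rather than a proof, and the step you yourself flag as ``the technical crux'' is exactly the part that is missing; the surrounding induction also contains an error. In the skew-symmetry expansion $u(0)w=\sum_{i\ge 0}\frac{(-1)^{i+1}}{i!}L(-1)^i\bigl(w(i)u\bigr)$, the terms with $i\ge 1$ do \emph{not} ``vanish by induction hypothesis'': your hypothesis is that $u(0)$ annihilates the lower weight spaces, which says nothing about the vectors $w(i)u\in V_{n-i}$ themselves being zero (and after applying $L(-1)^i$ each term sits back in weight $n$ in any case). So even the reduction to ``$w(0)u=0$'' is unjustified, and the subsequent paragraph about isotypic components, quadratic identities and the nilpotency of $W$ never produces an actual argument linking $u(0)N=0$ to the triviality of the action on $W$.

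The paper's route is different and quite short. First, since $\ker L(-1)=\mathbb{C}\mathbf{1}\subseteq V_0$ and $[L(-1),u(0)]=0$, the map $L(-1):V_n\to V_{n+1}$ is injective and $V_1$-equivariant for $n\neq 0$, so it suffices to kill the action on $V_0$. Then set $N':=L(-1)V_0$. Skew-symmetry gives $u(0)u=\tfrac12 L(-1)u(1)u$, so $N\subseteq N'$; and since $(L(-1)v)(0)=0$ one has $v'(0)u=0$ for $v'\in N'$, hence $u(0)v'=u(0)v'+v'(0)u\in N$ for $u\in S$. Thus $S(0)N'\subseteq N$ and $S(0)N=0$ (this is where $S\subseteq F$ enters), so $N'$ is an extension of trivial $S$-modules and is trivial by Weyl's theorem. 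Finally $V_0=\mathbb{C}\mathbf{1}\oplus J$ with $L(-1):J\to N'$ an $S$-isomorphism, so $S$ kills $J$ and hence $V_0$. This chain --- in particular the observation that $N\subseteq L(-1)V_0$ and that $S(0)(L(-1)V_0)\subseteq N$ --- is the content your proposal lacks.
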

\noindent
{\bf Proof.} We have to show that each homogeneous space $V_n\ (n \leq 0)$, is a trivial
left $S$-module. Because $L(-1):V _n \rightarrow V_{n+1}$ is an \emph{injective}
$V_1$-equivariant map for $n\not= 0$, it suffices to show that $V_0$ is a trivial $S$-module.

\medskip
Consider $L(-1): V_0 \rightarrow V_1$, and set $N':= L(-1)V_0$. Because $[L(-1), u(0)]=0$,
$L(-1)$ is 
$V_1$-equivariant.\ By skew-symmetry we have
$u(0)u = 1/2L(-1)u(1)u$. This shows that $N\subseteq N'$.
Now because $(L(-1)v)(0)=0\ (v \in V)$ then in particular $N'(0)V_1=0$.\ Therefore, 
$S(0)N' = \langle u(0)v+v(0)u\ | \ u \in S, v \in N'\rangle \subseteq N$. But
$S$ is semisimple  and it annihilates $N$. It follows
that $S$ annihilates $N'$.

\medskip
Because $V$ is simple, its center $Z(V)= kerL(-1)$ coincides with $\mathbb{C}\mathbf{1}$.\ By Weyl's theorem of complete reducibility,
there is an $S$-invariant decomposition
\begin{eqnarray*}
V_0 = \mathbb{C}\mathbf{1}\oplus J,
\end{eqnarray*}
and restriction  of $L(-1)$ is an $S$-isomorphism $ J \stackrel{\cong}{\rightarrow} N'$.\ Because $S$ annihilates $N'$,  it must annihilates $J$. It therefore also annihilates $V_0$, as we see from the previous display. 
This completes the proof of the Lemma. $\hfill \Box$

\medskip
\begin{lemma}\label{nilaction} We have
\begin{eqnarray}
u(k)w=0\ \ (u \in S, w \in W, k \geq 0).
\end{eqnarray}
\end{lemma}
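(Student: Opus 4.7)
The plan is to bootstrap Lemma \ref{lemmaWtriv} (the $k=0$ case) by exploiting the semisimplicity of $S$. Since $S$ is a semisimple Lie algebra, $S=[S,S]$, so every element of $S$ is a $\mathbb{C}$-linear combination of brackets $b(0)c$ with $b,c\in S$. By linearity of the mode map $x\mapsto x(k)$, it therefore suffices to prove $(b(0)c)(k)w = 0$ for all $b,c\in S$, all $w\in W$, and all $k\geq 0$.

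The key observation is that the commutator formula (\ref{commform}) collapses dramatically at $p=0$: because $\binom{0}{i}=\delta_{i,0}$, one has $[b(0),c(k)] = (b(0)c)(k)$. Applied to $w\in W$, this reads
\begin{eqnarray*}
(b(0)c)(k)w \ =\ b(0)\bigl(c(k)w\bigr)\ -\ c(k)\bigl(b(0)w\bigr).
\end{eqnarray*}
The second term vanishes immediately by Lemma \ref{lemmaWtriv}, which gives $b(0)w=0$. For the first term, note that $c\in V_1$ and $k\geq 0$ force $c(k)$ to carry $V_n$ into $V_{n-k}$; in particular, if $w\in V_n$ with $n\leq 0$ then $n-k\leq 0$, so $c(k)w\in W$. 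A second application of Lemma \ref{lemmaWtriv} then kills $b(0)\bigl(c(k)w\bigr)$, which yields the desired vanishing.

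No induction on $k$ is actually required: the $p=0$ specialization of the commutator formula reduces the statement for arbitrary $k\geq 0$ directly to the previously established $0^{\text{th}}$ product statement. The only point deserving attention is the closure $c(k)W\subseteq W$ for $k\geq 0$, which is immediate from the grading but is precisely what permits the base case to be re-invoked on $c(k)w$. I expect the main conceptual obstacle (if one can call it that) to be simply identifying the right trick, namely combining $S=[S,S]$ with the collapse of (\ref{commform}) at $p=0$; once these two ingredients are in hand, the verification is essentially mechanical.
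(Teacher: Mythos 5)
Your proof is correct and follows essentially the same route as the paper: both reduce to $u=a(0)b$ via $S=[S,S]$, expand $(a(0)b)(k)w=a(0)b(k)w-b(k)a(0)w$, and kill both terms with Lemma \ref{lemmaWtriv} together with the grading fact $b(k)w\in W$ for $k\geq 0$. No differences worth noting.
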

\noindent
{\bf Proof}. Because $S$ is semisimple, we may, and shall, assume without loss that  $u$ is a commutator $u= a(0)b\ (a, b \in S)$. Then
\begin{eqnarray*}
(a(0)b)(k)w = a(0)b(k)w - b(k)a(0)w= 0.
\end{eqnarray*}
The last equality holds thanks to Lemma \ref{lemmaWtriv}, and because
$b(k)w \in W$ for $k \geq 0$. The Lemma is proved. $\hfill \Box$

\begin{lemma}\label{lemmacommform} We have
\begin{eqnarray}\label{commform2}
[u(m), w(n)]=0\ \ (u \in S, w \in W; m, n \in \mathbb{Z}).
\end{eqnarray}
\end{lemma}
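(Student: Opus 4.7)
The plan is to recognize that this lemma is essentially an immediate corollary of Lemma \ref{nilaction} via the commutator formula (\ref{commform}).

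Specifically, I would take the commutator formula with $p = m$, $q = n$, $v = w$, and apply both sides to an arbitrary $x \in V$:
\begin{eqnarray*}
[u(m), w(n)]x = \sum_{i=0}^{\infty} \binom{m}{i}(u(i)w)(m+n-i)x.
\end{eqnarray*}
The right-hand sum has only finitely many nonzero terms (for homogeneous $u, w$ the product $u(i)w$ vanishes once $i$ is large enough), so there is no convergence subtlety. By Lemma \ref{nilaction}, every coefficient $u(i)w$ with $i \geq 0$ equals zero. Hence every term in the sum is zero, giving $[u(m), w(n)]x = 0$. Since $x \in V$ was arbitrary, the commutator vanishes as an operator.

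There is no real obstacle here: the only nontrivial input is that $S$ acts trivially on $W$ through \emph{all} nonnegative modes, which is precisely the content of the preceding lemma. The role of the nilpotence of $\oplus_{n\leq 0} V_n$ (noted in the first Lemma of the section) is implicitly what makes Lemma \ref{nilaction} available, since it is used there to reduce to the case of commutators $u = a(0)b$ in $S$. Once Lemma \ref{nilaction} is in hand, the present statement follows from a single application of the commutator formula with no further computation.
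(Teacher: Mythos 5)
Your argument is correct, and it is actually more direct than the one in the paper. You apply the commutator formula (\ref{commform}) once, with $p=m$, $q=n$, and observe that every term $(u(i)w)(m+n-i)$ with $i\geq 0$ dies by Lemma \ref{nilaction}; the sum is finite by the truncation axiom, so the whole commutator vanishes. The paper instead repeats the reduction to commutators $u=a(0)b$ (legitimate since $S=[S,S]$), rewrites $(a(0)b)(m)$ as the operator bracket $[a(0),b(m)]$, and then unwinds a nested Jacobi identity, ultimately still invoking the vanishing of the nonnegative modes $b(i)w$ and $a(0)w$. Both proofs rest on exactly the same inputs --- the commutator formula plus Lemmas \ref{lemmaWtriv} and \ref{nilaction} --- but your version makes clear that once Lemma \ref{nilaction} is established the present lemma is a one-line corollary, with no need to re-enter the semisimplicity of $S$. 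One small quibble with your closing remark: the nilpotence of $\oplus_{n\leq 0}V_n$ is not what drives Lemma \ref{nilaction}; what that lemma really uses is that $S=[S,S]$ together with the facts that $b(k)w\in W$ for $k\geq 0$ and that $a(0)$ annihilates $W$ (Lemma \ref{lemmaWtriv}). This does not affect the validity of your proof.
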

\noindent
{\bf Proof.}  First notice that by Lemma \ref{lemmaWtriv},
\begin{eqnarray}\label{commform3}
[u(0), w(n)] = (u(0)w)(n)=0.
\end{eqnarray}
Once again, it is suffices to assume that $u=a(0)b\ (a, b \in F)$.
 In this case we obtain, using several applications of (\ref{commform3}),
\begin{eqnarray*}
[u(m), w(n)]  &=& [(a(0)b)(m), w(n)] \\
&=& [[a(0), b(m)], w(n)] \\
&=& [a(0), [b(m), w(n)]] -[b(m), [a(0), w(n)]]\\
&=& [a(0), (b(0)w)(m+n)+m(b(1)w)(m+n-1)] \\
&=&  0.
\end{eqnarray*}
This completes the proof of the Lemma. $\hfill \Box$

\medskip
Consider the Lie algebra $L$ of operators on $V$ defined by
\begin{eqnarray*}
L := \langle u(m), w(n) \ | \ u \in S, w \in W; m, n \in \mathbb{Z} \rangle.
\end{eqnarray*}

\medskip
If $w, x \in W$ then
\begin{eqnarray*}
[w(m), x(n)] = \sum_{i \geq 0} {m \choose i}(w(i)x)(m+n-i),
\end{eqnarray*}
and $w(i)x$ has weight \emph{less} than that of $w$ and $x$ whenever $w, x \in W$ are homogeneous
and $i \geq 0$. This shows that the operators $w(m)\ (w \in W, m \in \mathbb{Z})$
span a nilpotent ideal of $L$, call it $P$. Let $L_0$ be the Lie subalgebra generated by $u(m)\ (u \in S_0, m \in \mathbb{Z})$.
 By Lemma \ref{lemmacommform}, $L_0$ is also an ideal of $L$, indeed 
  \begin{eqnarray*}
L = P + L_0, \ [P, L_0]=0.
\end{eqnarray*}

Next, for $u, v \in S$ we have
\begin{eqnarray}\label{S0calc}
[u(m), v(n)] &=& (u(0)v)(m+n)+\sum_{i \geq 1}{m \choose i}(u(i)v)(m+n-i).
\end{eqnarray}
So if $w \in S$ then by Lemma \ref{lemmacommform} once more,
\begin{eqnarray*}
[w(0), [u(m), v(n)] ] = [w(0), (u(0)v)(m+n)] = (w(0)(u(0)v))(m+n).
\end{eqnarray*}
This shows that $L_0$ coincides with its derived subalgebra.
Furthermore, the short exact sequence
\begin{eqnarray*}
0 \rightarrow P\cap L_0 \rightarrow L_0 \rightarrow L_0/(P\cap L_0) \rightarrow 0
\end{eqnarray*}
shows that $L_0$ is a perfect 
central extension of the loop algebra $\widehat{L}(S_0) \cong$ \\
$ L_0/(P\cap L_0)$. Because $H^2(\widehat{L}(G))$ is $1$-dimensional
for a finite-dimensional simple Lie algebra $G$, we can conclude that $\dim (P\cap L_0)$ is
\emph{finite}. 

\medskip
Taking $m=1$ in (\ref{S0calc}), it follows that
\begin{eqnarray}\label{u1v}
(u(1)v)(n) \in P\cap L_0 \ (n \in \mathbb{Z}).
\end{eqnarray}
Now if $u(1)v \notin Z(V)$ then all of the modes $(u(1)v)(n), n < 0,$ are nonzero, and indeed linearly
independent. This follows from the creation formula \\
 $\sum_{n\leq -1}(u(1)v)(n)\mathbf{1}z^{-n-1} = e^{zL(-1)}u(1)v$. Because $P\cap L_0$ is finite-dimensional and contains all of these modes, this is not possible. We deduce that in fact
 $u(1)v \in Z(V)=\mathbb{C}$, say $u(1)v = \alpha(u, v)\mathbf{1}, \alpha(u, v)\in \mathbb{C}$.
 
 \medskip
 Taking $m=2, 3, ...$ in (\ref{S0calc}), we argue in the same way that $u(i)v \in Z(V)$
 for $i\geq 2$. Since $Z(V)\subseteq V_0$, this means that $u(i)v=0$ for $i \geq 2$.
 Therefore, (\ref{S0calc}) now reads
 \begin{eqnarray}\label{S1calc}
[u(m), v(n)] &=& (u(0)v)(m+n)+m\alpha(u, v)\delta_{m+n, 0}Id,
\end{eqnarray}
where $u(1)v = \alpha(u, v)\mathbf{1}$.\ This completes the proof of part (b) of the Theorem.

\medskip
It remains to show that Aut$(V)$ acts transitively on the set of Levi subalgebras
of $F$. 
\begin{lemma}\label{lemprim} $[FF]$ consists of \emph{primary states}, i.e., $L(k)[FF]=0\ (k \geq 1)$.
\end{lemma}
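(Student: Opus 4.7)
The plan is to compute $L(k)(a(0)b) = [L(k), a(0)]b + a(0)L(k)b$ via the Virasoro--vertex commutator and to show the result vanishes by combining the lemmas already established for the Levi subalgebra $S$ with the defining identity $F(0)N = 0$. First I would expand $[L(k), a(0)] = \sum_{i \ge 0}\binom{k+1}{i}(L(i-1)a)(k+1-i)$ and observe that the $i = 0$ and $i = 1$ contributions cancel (since $(L(-1)a)(k+1) = -(k+1)a(k)$ and $L(0)a = a$ for $a \in V_1$), so that
\[
L(k)(a(0)b) = \sum_{i \ge 2}\binom{k+1}{i}(L(i-1)a)(k+1-i)b + a(0)L(k)b.
\]
Each surviving $L(j)a$ and $L(k)b$ then lies in $V_{1-j} \subseteq W$, and every surviving mode index $k+1-i$ is $\ge 0$.

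The case $a, b \in S$ can be disposed of by applying the skew-symmetry identity $x(n)y = \sum_{p \ge 0}\frac{(-1)^{n+p+1}}{p!}L(-1)^p y(n+p)x$ with $x = L(j)a \in W$ and $y = b \in S$: the right-hand side vanishes termwise by Lemma \ref{nilaction} ($b(m)x = 0$ for $b \in S$, $x \in W$, $m \ge 0$), while $a(0)L(k)b = 0$ by Lemma \ref{lemmaWtriv}. This gives $L(k)|_{[SS]} = 0$, so $S = [SS]$ consists of primary states. Once this is known, the same machinery immediately covers $u(0)b$ and $b(0)u$ whenever $u \in S$ and $b \in V_1$: the whole sum $\sum_{i \ge 2}\cdots$ vanishes because each $L(j)u = 0$, and $S(0)W = 0$ disposes of the remaining $u(0)L(k)b$. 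In the Levi decomposition $F = S \oplus B$ this handles the summands $[SS]$, $[SB]$, and $[BS]$ of $[FF]$.

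The main obstacle is the $[BB]$ piece, where neither semisimplicity nor $S$-annihilation of $W$ is available. Here I plan to rely only on the defining property $F(0)N = 0$. Applied to the generators $a(0)(b(0)b) = 0$ and $a(0)(b(0)c + c(0)b) = 0$ of $N$ (for $a, b, c \in F$) and unpacked via the Leibniz identity, this produces relations such as $(a(0)b)(0)b = -a(0)^2 b = b(0)^2 a$. Combined with the skew-symmetry $(L(1)a)(0)b + b(0)L(1)a \in L(-1)V$ and the injectivity of $L(-1)$ on $V/\mathbb{C}\mathbf{1}$, I expect these to force the symmetry $a(0)L(1)b = b(0)L(1)a$, whence $L(1)(a(0)b) = (L(1)a)(0)b + a(0)L(1)b = a(0)L(1)b - b(0)L(1)a = 0$. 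The higher cases $k \ge 2$ should follow by rerunning the same argument with $L(k)a \in W$ in place of $L(1)a \in V_0$. The crucial and delicate step is precisely the extraction of the needed symmetry from the $[BB]$ annihilator identities.
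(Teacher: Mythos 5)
Your computation for the $[SS]$, $[S,B]$, and $[B,S]$ pieces is correct and is essentially the paper's own argument: expand $[L(k),a(0)]$, cancel the $i=0,1$ terms, and kill what survives by skew-symmetry together with Lemmas \ref{lemmaWtriv} and \ref{nilaction}. But the $[BB]$ piece --- which you yourself flag as ``the crucial and delicate step'' --- is not proved; it is a plan (``I expect these to force the symmetry\dots'', ``should follow by rerunning\dots''). The relations you propose to extract from $F(0)N=0$ (e.g.\ $(a(0)b)(0)b=-a(0)^2b=b(0)^2a$) are not actually derived, and the passage from them to the needed symmetry $a(0)L(1)b=b(0)L(1)a$ is not supplied. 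Note also that your identity $L(1)(a(0)b)=(L(1)a)(0)b+a(0)L(1)b$ still contains the term $(L(1)a)(0)b$ with $L(1)a\in V_0$; this does not vanish for free (elements of $V_0$ can have nonzero zero modes), so even granting the symmetry you would still have to convert $(L(1)a)(0)b$ by skew-symmetry and control the resulting $L(-1)$-terms. The argument is therefore incomplete exactly where the statement has content beyond the semisimple part.

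For comparison: the paper does not split $F=S\oplus B$ at all. For arbitrary $a,b\in F$ it reduces, after the same cancellation, to $(L(k)a)(0)b$, rewrites this by skew-symmetry as $\sum_{i\ge 0}\frac{(-1)^{i+1}}{i!}\,L(-1)^i\,b(i)\bigl(L(k)a\bigr)$, and kills every term because $L(k)a\in W$ and $b(i)w=0$ for $w\in W$, $i\ge 0$ --- that is, it invokes Lemmas \ref{lemmaWtriv} and \ref{nilaction} for elements of $F$, not merely of $S$. (Whether those lemmas, whose proofs as written lean on the semisimplicity of $S$, genuinely extend to all of $F$ is a point the paper passes over quickly, but that is its chosen route.) So the single missing ingredient in your approach is a proof that all of $F$, and not just $S$, annihilates $W$ under the nonnegative modes; with that in hand the uniform argument goes through and the problematic $[BB]$ case never arises.
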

\noindent
{\bf Proof}. It suffices to show that $L(k)a(0)b=0$ for $a, b \in F$ and $k \geq 1$. Since
$L(k)b \in W$ then $a(0)L(k)b=0$ by Lemma \ref{lemmaWtriv}. Using induction on $k$, we then have
\begin{eqnarray*}
L(k)a(0)b &=&[L(k), a(0)]b \\
&=&(L(-1)a)(k+1)b +(k+1)(L(0)a)(k)b + (L(k)a)(0)b \\
&=&(L(k)a)(0)b \\
&=& \sum_{i \geq 0}(-1)^{i+1}/i! L(-1)^i  b(i)L(k)a = 0,
\end{eqnarray*}
where we used skew-symmetry for the fourth equality, and
Lemma \ref{nilaction} for the last equality. The Lemma is proved. $\hfill \Box$

\medskip
Finally,  by \cite{MY}, Theorem 3.1, if $S_1, S_2$ are a pair of Levi subalgebras
of $F$, then we can find $x \in [FF]$ such that $e^{x(0)}(S_1)=S_2$. Because $x$ is a primary state, it is well-known that $e^{x(0)}$ is an automorphism of $V$.\ This completes the proof of Theorem
\ref{thmLevi}. $\hfill \Box$

\section{$\mathbb{N}$-graded vertex operator algebras}
In this Section, we assume that $V$ is a \emph{simple, self-dual, $\mathbb{N}$-graded} vertex operator
algebra.\
We are mainly interested in the case that $\dim V_0\geq 2$.\ There is a lot of structure available to us in this situation, and in this Section we review some of the details, and at the same time introduce some salient notation.

\medskip
The self-duality of $V$ means that there is a \emph{nonzero}  bilinear form
\begin{eqnarray*}
( \ , \ ): V \times V \rightarrow \mathbb{C}
\end{eqnarray*}
that is \emph{invariant} in the sense that 
\begin{eqnarray}\label{invbilform1}
\left(Y(u, z)v, w\right)= \left(v, Y(e^{zL(1)}(-z^{-2})^{L(0)}u, z^{-1})w\right)\ \ (u, v, w\in V).
\end{eqnarray}
 $( \ , \ )$ is necessarily \emph{symmetric} (\cite{FHL}), and because $V$ is simple
then it is \emph{nondegenerate}.\ The simplicity of $V$ also implies
(Schur's Lemma) that $(\ , \ )$ is \emph{unique} up to scalars.\ By results of Li \cite{L}, there is an isomorphism between the space of invariant bilinear forms and $V_0/L(1)V_1$.\
Therefore, $L(1)V_1$ has codimension $1$ in $V_0$.\  For now, we fix a
nonzero form $(\ , \ )$, but do not choose any particular normalization.

\medskip
 If $u\in V_k$ is \emph{quasiprimary} (i.e., $L(1)u=0$), then
(\ref{invbilform1}) is equivalent to
\begin{eqnarray}\label{invbilform2}
(u(n)v, w) = (-1)^k(v, u(2k-n-2)w)\ \ \ (n \in \mathbb{Z}).
\end{eqnarray}
In particular, taking $u$ to be the conformal vector
$\omega \in V_2$, which is always quasiprimary, and $n=1$ or $2$ yields
\begin{eqnarray}
(L(0)v, w) &=& (v, L(0)w), \label{invbilform3} \\
(L(1)v, w) &=& (v, L(-1)w). \label{invbilform30}
\end{eqnarray}

\medskip
 We write $P\perp Q$ for 
the direct sum of subspaces  $P, Q \subseteq V$ that are orthogonal
with respect to $( \ , \ )$. Thus $(V_n, V_m)=0$ for $n\not= m$ by (\ref{invbilform3}), so that
\begin{eqnarray*}
V = \perp_{n\geq 0} V_n.
\end{eqnarray*}
In particular, the restriction of $( \ , \ )$ to each $V_n$ is nondegenerate.
We adopt the following notational convention for
$U \subseteq V_n$:
\begin{eqnarray*}
U^{\perp} := \{ a \in V_n \ | \ (a, U)=0\}.
\end{eqnarray*}

The \emph{center} of $V$ is defined to be $Z(V):=$ ker$L(-1)$.\ Because $V$ is simple, 
we have $Z(V) = \mathbb{C}\mathbf{1}$ (cf.\ \cite{LL}, \cite{DM2}).\ Then from (\ref{invbilform30}) we find that
\begin{eqnarray}\label{L1V1}
(L(1)V_1)^{\perp} &=& \mathbb{C}\mathbf{1}.
\end{eqnarray}

$V_0$ carries the structure of a \emph{commutative associative algebra} with respect to the operation $a(-1)b\ (a, b \in V_0)$.\ 
Since all elements
in $V_0$ are quasiprimary, we can apply (\ref{invbilform2}) with $u, v, w \in V_0$ to obtain
\begin{eqnarray}\label{invbilform21}
(u(-1)v, w) = (v, u(-1)w).
\end{eqnarray}
Thus $( \ ,\ )$ is a nondegenerate, symmetric, invariant bilinear form on $V_0$, 
whence $V_0$ is a commutative \emph{symmetric algebra}, or  \emph{Frobenius algebra}. 

\medskip
What is particularly important for us is that because 
$V$ is simple, $V_0$ is  a \emph{local algebra}, i.e.,  the Jacobson radical $J:= J(V_0)$ is the unique maximal
ideal of $V_0$, and every element of $V_0\setminus{J}$ is a unit.\ This follows from
results of Dong-Mason (\cite{DM2}, Theorem 2 and Remark 3).

\medskip
For a symmetric algebra, the map $I \rightarrow I^{\perp}$
is an inclusion-reversing duality on the set of ideals.\
In particular, because
$V_0$ is a local algebra, it has a \emph{unique minimal} (nonzero) ideal, call it $T$, and
$T$ is $1$-dimensional. Indeed,
\begin{eqnarray}\label{Tchar}
T = J^{\perp} = Ann_{V_0}(J) = \mathbb{C}t,
\end{eqnarray}
for some  fixed, but arbitrary, nonzero element
$t \in T$.\ We have 
\begin{eqnarray*}
 T \oplus L(1)V_1= V_0.
\end{eqnarray*}
This is a consequence of
the nondegeneracy of $(\ , \ )$ on $V_0$, which entails that
$L(1)V_1$ contains no nonzero ideals of $V_0$. In
particular,  (\ref{L1V1}) implies that
\begin{eqnarray}\label{1tperp}
(t, \mathbf{1})\not= 0.
\end{eqnarray}

\medskip
We will change some of the notation from the previous Section by here setting
$N:=L(-1)V_0$ (it was denoted $N'$ before).\ In the proof of Lemma
\ref{lemmaWtriv} we showed that $N$ contains the Leibniz kernel of $V_1$.\ In particular, $V_1/N$ is a Lie algebra.
We write
\begin{eqnarray}\label{Leibnilraddef}
N_0/N = Nil(V_1/N),\ N_1/N = Nilp(V_1/N), B/N=solv(V_1/N),
\end{eqnarray}
the \emph{nil radical}, \emph{nilpotent radical}, and \emph{solvable radical} respectively of $V_1/N$.\ $N_0/N$ is the largest
nilpotent ideal in $V_1/N$, $N_1/N$ is the intersection of the
annihilators of simple $V_1/N$-modules, and $B/N$ the largest solvable ideal in $V_1/N$.\ It is 
well-known that $N_1\subseteq N_0\subseteq B$.\ Moreover, $N_1/N = [V_1/N, V_1/N]\cap B/N$,
$V_1/N_1$ is
a \emph{reductive} Lie algebra, and $N_1$ is the smallest ideal in $V_1$
with this property.\ Note that  $N_0$ and $B$ are also the largest nilpotent, and solvable ideals respectively in the left Leibniz algebra $V_1$.

\medskip
Each of the homogeneous spaces $V_n$ is a left $V_1$-module with respect to the $0^{th}$
bracket.\ Because $u(0)=0$ for $u\in N$, it follows that $V_n$ is also a left module over the Lie algebra
$V_1/N$.\
Since $V_0=\mathbb{C}\mathbf{1}
\oplus J$,  $L(-1)$ induces an \emph{isomorphism} of $V_1$-modules
\begin{eqnarray}\label{LJiso}
L(-1): J \stackrel{\cong}{\rightarrow} N.
\end{eqnarray}

\begin{remark}
 Most of the structure we have been discussing concerns the $1$-truncated conformal algebra
 $V_0\oplus V_1$ (\cite{Br}, \cite{GMS}, \cite{LY}), and many of our results can be couched in this
 language.
\end{remark}

\section{The bilinear form $\langle \ , \ \rangle$}\label{SS3.2}
We keep the notation of the previous Section, in particular $t\in V_0$ spans the unique minimal ideal
of $V_0$.\ We  introduce the bilinear form $\langle \ , \ \rangle: V_1 \otimes V_1 \rightarrow  \mathbb{C}$,
defined as follows:
\begin{equation} \label{<>def}
\langle u, v \rangle:=  (u(1)v, t), \ \ (u, v \in V_1).
\end{equation} 
We are interested in the \emph{radical} of $\langle \ , \ \rangle$, defined as
\begin{eqnarray*}
rad\langle \ , \rangle := \{u \in V_1 \ | \ \langle u, V_1 \rangle = 0\}.
\end{eqnarray*}
We will see that $\langle \ , \ \rangle$ is a symmetric, invariant bilinear form on the Leibniz algebra $V_1$.\ The main result of this Section (Proposition \ref{relari}) determines the radical in terms of certain other subspaces
that we introduce in due course.\
In order to study $\langle \ , \ \rangle$ and its radical, we need some preliminary results.
\begin{lemma}\label{lemmaM} We have $u(0)J \subseteq J$ and $u(0)T\subseteq T$ for $u \in V_1$. Moreover, the left annihilator
\begin{eqnarray*}
M:= \{u \in V_1 \ | \ u(0)T=0\}
\end{eqnarray*}
is a $2$-sided ideal of $V_1$ of codimension $1$, and $M=(L(-1)T)^{\perp}$.
\end{lemma}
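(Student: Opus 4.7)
The plan is to prove each claim in order: $u(0)$ acts on $V_0$ as a derivation preserving both $J$ and $T$; $M$ is a two-sided Leibniz ideal; and $M=(L(-1)T)^{\perp}$, from which codimension $1$ follows automatically.

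For the derivation property and the preservation claims, the commutator formula (\ref{commform}) with $p=0$, $q=-1$ gives $[u(0),a(-1)]=(u(0)a)(-1)$, which when applied to $b\in V_0$ says precisely that $u(0)$ is a derivation of the commutative algebra $V_0$; also $u(0)\mathbf{1}=0$. Since $V_0$ is a finite-dimensional local commutative $\mathbb{C}$-algebra, $\exp(s\,u(0))$ is a $\mathbb{C}$-algebra automorphism of $V_0$ for every $s\in\mathbb{C}$, and any such automorphism must stabilize both the unique maximal ideal $J$ and the unique minimal ideal $T$. Differentiating at $s=0$ yields $u(0)J\subseteq J$ and $u(0)T\subseteq T$; in particular there is a linear functional $\lambda\colon V_1\to\mathbb{C}$ with $u(0)t=\lambda(u)\,t$, and $M=\ker\lambda$.

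For the ideal property, the commutator identity $(u(0)v)(0)=[u(0),v(0)]$ evaluated at $t$ gives, for arbitrary $u,v\in V_1$,
\begin{equation*}
(u(0)v)(0)\,t \;=\; u(0)(\lambda(v)\,t) - v(0)(\lambda(u)\,t) \;=\; \lambda(u)\lambda(v)\,t - \lambda(u)\lambda(v)\,t \;=\; 0,
\end{equation*}
so $V_1(0)V_1\subseteq M$, which makes $M$ both a left and a right ideal of $V_1$.

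Finally, for $M=(L(-1)T)^{\perp}$, I would expand the invariance formula (\ref{invbilform1}) for $u\in V_1$ using $L(0)u=u$ and $L(1)^{2}u=0$, then compare the $z^{-1}$ coefficients, to obtain
\begin{equation*}
(u(0)v,w) \;=\; -(v,u(0)w) - (v,(L(1)u)(-1)w), \qquad v,w\in V.
\end{equation*}
Setting $v=t$, $w=\mathbf{1}$ and using $u(0)\mathbf{1}=0$ and $(L(1)u)(-1)\mathbf{1}=L(1)u$, this reduces to $(u(0)t,\mathbf{1}) = -(L(1)u,t)$. Combined with $u(0)t=\lambda(u)\,t$ it reads $\lambda(u)(t,\mathbf{1}) = -(L(1)u,t)$, and since $(t,\mathbf{1})\neq 0$ by (\ref{1tperp}) and $(L(1)u,t) = (u,L(-1)t)$ by (\ref{invbilform30}), we conclude $\lambda(u)=0 \iff (u,L(-1)t)=0$, i.e.\ $M=(L(-1)T)^{\perp}$. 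Because $t\notin\ker L(-1)=\mathbb{C}\mathbf{1}$, the space $L(-1)T$ is one-dimensional, so $M$ has codimension $1$ in $V_1$ by non-degeneracy of $(\,,\,)$ on $V_1$. The step I expect to be the main obstacle is the proof that $u(0)$ preserves $J$ and $T$; the automorphism argument handles it cleanly once one uses that $V_0$ is finite-dimensional, but a direct purely algebraic proof from the derivation property alone is not obvious.
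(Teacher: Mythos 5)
Your proof is correct and follows essentially the same route as the paper's: derivations of a finite-dimensional local commutative algebra preserve $J$ and its annihilator $T$, the identity $[u(0),v(0)]=(u(0)v)(0)$ applied to $t$ gives the ideal property, and an adjointness computation identifies $M$ with $(L(-1)T)^{\perp}$. The only (harmless) variations are that you exponentiate the derivation rather than quoting the standard invariance fact, you prove the slightly stronger statement $V_1(0)V_1\subseteq M$, and you compute the adjoint of $u(0)$ directly instead of the paper's equivalent manipulation $(L(-1)t,u)=(t(-2)\mathbf{1},u)=(\mathbf{1},t(0)u)=-(\mathbf{1},u(0)t)$.
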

\noindent
{\bf Proof.}\ Any derivation of a finite-dimensional commutative algebra
$B$, say, leaves invariant both the Jacobson radical $J(B)$ and its
annihilator. In the case that the derivation
is $u(0), u \in V_1,$ acting on $V_0$, this says that the left action of $u(0)$ leaves
both $J$ and $T$ invariant (using (\ref{Tchar}) for the second assertion).
This proves the first two statements of the Lemma.

\medskip
For $u \in V_1$ we have
\begin{eqnarray}\label{tucalc}
(L(-1)t, u) = (t(-2)\mathbf{1}, u)=(\mathbf{1}, t(0)u) = -(\mathbf{1}, u(0)t).
\end{eqnarray}
Now because $T$ is the unique minimal ideal in $V_0$ then
$T\subseteq J$ and hence $dimL(-1)T=1$ by (\ref{LJiso}).\ Then
(\ref{tucalc}) and (\ref{1tperp}) show that $(L(-1)T)^{\perp}= M$ has codimension exactly $1$
in $V_1$.

\medskip
Finally, using the commutator formula $[u(0), v(0)] = (u(0)v)(0)$ applied
with one of $u, v \in M$ and the other in $V_1$, we see that
$(u(0)v)(0)T=0$ in either case. Thus $u(0)v \in M$, 
whence $M$ is a $2$-sided ideal in $V_1$. This completes the proof of
the Lemma. $\hfill \Box$

\begin{lemma}\label{lemmat2J}We have
\begin{eqnarray*}
t(-2)J=0.
\end{eqnarray*}
\end{lemma}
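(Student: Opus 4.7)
My plan is to compute $(t(-2)j, w)$ for arbitrary $j \in J$ and $w \in V_1$, show it vanishes, and invoke nondegeneracy of $(\ , \ )$ on $V_1$ to conclude $t(-2)j = 0$. Note that $t(-2)$ raises conformal weight by $1$, so $t(-2)j \in V_1$, so it suffices to pair against $V_1$.

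First I would observe that $t \in V_0$ is quasiprimary, since $L(1)t \in V_{-1} = 0$ by the $\mathbb{N}$-grading. Then the invariance formula (\ref{invbilform2}) with $k=0, n=-2$ gives
\begin{equation*}
(t(-2)j, w) \;=\; (j, t(0)w).
\end{equation*}
Next I would rewrite $t(0)w$ via skew-symmetry, using the standard expansion
$t(0)w = \sum_{i \geq 0} \frac{(-1)^{i+1}}{i!} L(-1)^i w(i)t$. For $i \geq 1$ the term $w(i)t$ lives in $V_{-i} = 0$, so only the $i=0$ term survives, yielding $t(0)w = -w(0)t$.

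Now $w(0)t \in V_0$, and by Lemma \ref{lemmaM} the ideal $T = \mathbb{C}t$ is invariant under $w(0)$, so $w(0)t = \alpha(w)\, t$ for some scalar $\alpha(w)$. Consequently
\begin{equation*}
(t(-2)j, w) \;=\; -\alpha(w)\,(j, t).
\end{equation*}
But $j \in J$ and $t \in T = J^\perp$, so $(j, t) = 0$. This holds for all $w \in V_1$, and since $(\ , \ )$ restricts nondegenerately to $V_1$, we conclude $t(-2)j = 0$.

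There is no real obstacle here: the proof is a short chain of identities hinging on (i) the invariance formula for the quasiprimary element $t$, (ii) $\mathbb{N}$-grading to collapse skew-symmetry to a single term, (iii) invariance of the minimal ideal $T$ under $V_1$ from Lemma \ref{lemmaM}, and (iv) the orthogonality $T = J^\perp$. The only small thing to watch is the weight bookkeeping for $t(-2): V_0 \to V_1$, which is what makes pairing against $V_1$ the right move.
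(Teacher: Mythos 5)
Your proof is correct and follows essentially the same route as the paper: both compute $(t(-2)j, w) = (j, t(0)w) = -(j, w(0)t)$ and conclude from $w(0)t \in T = J^{\perp}$ (Lemma \ref{lemmaM} and (\ref{Tchar})) together with nondegeneracy of the form on $V_1$. Your version merely spells out the quasiprimarity of $t$ and the collapse of the skew-symmetry expansion, which the paper leaves implicit.
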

\noindent
{\bf Proof.} Let $a \in J, u \in V_1$. Then
\begin{eqnarray*}
(t(-2)a, u) = (a, t(0)u) = -(a, u(0)t) = 0.
\end{eqnarray*}
The last equality follows from $u(0)t \in T$ (Lemma \ref{lemmaM})
and $T=J^{\perp}$ (\ref{Tchar}). We deduce that $t(-2)J \subseteq V_1^{\perp}=0$, and
the Lemma follows. $\hfill \Box$

\begin{proposition}\label{propbilform} $\langle \ , \ \rangle$ is a symmetric bilinear form that is \emph{invariant} in the sense that
\begin{eqnarray*}
\langle v(0)u,w \rangle= \langle v, u(0)w \rangle \ \ (u, v, w \in V_1).
\end{eqnarray*}
 Moreover
$N \subseteq$ rad $\langle \ , \ \rangle$. 
\end{proposition}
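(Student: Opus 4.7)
The plan is to establish the three claims in sequence, using the structural results on $V_0$ from Section 3 as the main leverage.

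\emph{Symmetry.} From the skew-symmetry identity $Y(u,z)v = e^{zL(-1)} Y(v,-z)u$, I would extract the coefficient of $z^{-2}$ to obtain $u(1)v = \sum_{k \geq 0} \frac{(-1)^k}{k!} L(-1)^k v(k+1)u$. For $u, v \in V_1$ and $k \geq 1$, $v(k+1)u \in V_{-k} = 0$ by $\mathbb{N}$-gradedness; hence only the $k=0$ term survives, $u(1)v = v(1)u$, and therefore $\langle u, v\rangle = \langle v, u\rangle$.

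\emph{Invariance.} I would combine two instances of the commutator formula (\ref{commform}), namely $[v(0), u(1)] = (v(0)u)(1)$ and $[v(1), u(0)] = (v(0)u)(1) + (v(1)u)(0)$, to rewrite the difference $\langle v(0)u, w\rangle - \langle v, u(0)w\rangle$ as $-(u(0)v(1)w, t) - ((v(1)u)(0)w, t)$. The first summand vanishes because $v(1)w \in V_0 = \mathbb{C}\mathbf{1} \oplus J$, so $u(0)\mathbf{1} = 0$ together with $u(0)J \subseteq J$ (Lemma \ref{lemmaM}) puts $u(0)v(1)w \in J$, and $(J, t) = 0$ by (\ref{Tchar}). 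For the second, since $z := v(1)u \in V_0$ is automatically quasiprimary, (\ref{invbilform2}) with $k=n=0$ gives $(z(0)w, t) = (w, z(-2)t)$, reducing matters to proving $z(-2)t = 0$. Decomposing $z = c\mathbf{1} + j$ with $j \in J$, the vacuum axiom kills $\mathbf{1}(-2)t$, and skew-symmetry produces the identity $j(-2)t + t(-2)j = L(-1)(jt)$; here $jt \in TJ = 0$ (from $T = \mathrm{Ann}_{V_0}(J)$ in (\ref{Tchar})) and $t(-2)j = 0$ by Lemma \ref{lemmat2J}, so $j(-2)t = 0$.

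\emph{Radical inclusion.} By symmetry it suffices to show $\langle v, u\rangle = 0$ for $v \in V_1$ and $u \in N$. Since $L(-1)\mathbf{1} = 0$ and (\ref{LJiso}) is an isomorphism, $N = L(-1)J$, so I may take $u = L(-1)j$ with $j \in J$. The Virasoro commutator $[L(-1), v(1)] = -v(0)$ together with the weight constraint $v(1)j \in V_{-1} = 0$ gives $v(1)L(-1)j = v(0)j$, hence $\langle v, L(-1)j\rangle = (v(0)j, t)$. Applying (\ref{invbilform2}) to the quasiprimary component $v_q$ of $v$ (noting $v(0) = v_q(0)$ since $(L(-1)x)(0) = 0$), this equals $-(j, v(0)t)$, which vanishes because $v(0)t \in T = \mathbb{C}t$ (Lemma \ref{lemmaM}) and $(j, t) \in (J, T) = 0$.

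The delicate step is the vanishing $z(-2)t = 0$ appearing in the invariance argument: it is the one place where the local-Artinian structure of $V_0$ is essential, combining Lemma \ref{lemmaM}, Lemma \ref{lemmat2J}, the skew-symmetry identity for $(-2)$-products of weight-zero vectors, and the annihilator relation $TJ = 0$. Everything else in the proof reduces to commutator bookkeeping and weight counting.
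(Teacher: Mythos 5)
Your symmetry argument and your invariance argument are correct, and the latter takes a genuinely different route from the paper. You expand $(v(0)u)(1)$ via $[v(1),u(0)]$, which produces the extra term $((v(1)u)(0)w,t)$, and you kill it by proving $z(-2)t=0$ for every $z\in V_0$. The paper instead proves $N\subseteq\mathrm{rad}\langle\ ,\ \rangle$ \emph{first}, then expands $(u(0)v)(1)$ via $[u(0),v(1)]$ (no extra term, since ${0 \choose i}=0$ for $i\geq 1$) to get $\langle u(0)v,w\rangle=-\langle v,u(0)w\rangle$, and finally converts this to the stated identity using $v(0)u=-u(0)v+L(-1)u(1)v$ together with the radical inclusion. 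Your version makes invariance logically independent of the radical statement, at the cost of the $z(-2)t=0$ computation --- which is essentially the same computation the paper performs inside its own radical argument.

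The genuine problem is in your radical-inclusion step. You justify $(v(0)j,t)=-(j,v(0)t)$ by applying (\ref{invbilform2}) to ``the quasiprimary component $v_q$ of $v$,'' implicitly writing $v=v_q+L(-1)x$ with $x\in V_0$. No such decomposition exists in general: for $x\in V_0$ one has $L(1)L(-1)x=2L(0)x+L(-1)L(1)x=0$, so every vector of $L(-1)V_0$ is itself quasiprimary, and hence any sum $v_q+L(-1)x$ is quasiprimary. Since $L(1)V_1$ has codimension one in $V_0$ (Section 3), $V_1$ contains non-quasiprimary vectors as soon as $\dim V_0\geq 2$ --- exactly the case of interest --- and for such $v$ your decomposition fails. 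The step is repairable in two ways: either apply the full adjoint formula (\ref{invbilform1}) to $v\in V_1$ and check that the resulting correction term $(j,(L(1)v)(-1)t)$ vanishes (it does: writing $L(1)v=c\mathbf{1}+j'$ with $j'\in J$, one gets $(L(1)v)(-1)t=ct$ because $J(-1)T=0$ by (\ref{Tchar}), and then $(j,ct)=0$ since $T=J^{\perp}$); or --- as the paper does --- put the adjoint on $j$ rather than on $v$: $j\in V_0$ is automatically quasiprimary, so $(j(0)v,t)=(v,j(-2)t)$ by (\ref{invbilform2}), and $j(-2)t=0$ by the very computation you already carried out in your invariance step.
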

\noindent
{\bf Proof.} By skew-symmetry we have $u(1)v = v(1)u$ for $u, v \in V_1$, so the symmetry
of $\langle \ , \ \rangle$ follows immediately from the definition (\ref{<>def}).\ If $u\in N$ then $u=L(-1)a$ for some $a\in J$ by
(\ref{LJiso}), and we  have 
\begin{eqnarray*}
\langle u,v \rangle&=&((L(-1)a)(1)v,t) = -(a(0)v,t)\\
&=&-(v,a(-2)t) = (v, t(-2)a-L(-1)t(-1)a) = 0.
\end{eqnarray*}
Here, we used $t(-2)a=0$ (Lemma \ref{lemmat2J}) and $t(-1)a \in t(-1)J=0$
to obtain the last equality. This proves the assertion that $N \subseteq$ rad$\langle \ , \ \rangle$.

\medskip
As for the invariance, we have
\begin{eqnarray*}
\langle u(0)v,w\rangle&=&((u(0)v)(1)w,t) = (u(0)v(1)w-v(1)u(0)w,t). \\
&=&(u(0)v(1)w, t)-\langle v,u(0)w\rangle.
\end{eqnarray*}
Now $V_0= \mathbb{C}\mathbf{1}\oplus J, u(0)\mathbf{1}=0$, and $u(0)J\subseteq J=T^{\perp}$.
Therefore, $(u(0)v(1)w, t)=0$, whence we obtain
$\langle u(0)v,w\rangle =-\langle v,u(0)w\rangle$ from the previous display. Now because
$N\subseteq$ rad$\langle \ , \ \rangle$ we see that
\begin{eqnarray*}
\langle v(0)u, w \rangle &=& -\langle u(0)v-L(-1)u(1)v, w\rangle= \langle v, u(0)w\rangle,
\end{eqnarray*}
as required. This completes the proof of the Proposition. $\hfill \Box$

\medskip
\begin{lemma}\label{lemmarad} We have 
\begin{eqnarray}\label{newinn}
\langle u, v \rangle = -(v, u(-1)t), \ \ u, v \in V_1.
\end{eqnarray}
In particular,
\begin{eqnarray*}
rad\langle \ , \ \rangle = \{u \in V_1 \ | \ u(-1)t=0\}.
\end{eqnarray*}
\end{lemma}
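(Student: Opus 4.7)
The plan is to deduce (\ref{newinn}) from the invariance relation (\ref{invbilform1}), and then read off the characterisation of $\mathrm{rad}\,\langle\ ,\ \rangle$ from the nondegeneracy of $(\ ,\ )$ on $V_1$. For $u\in V_1$ one has $L(0)u=u$ and $L(1)^{2}u\in L(1)V_{0}=0$, so the conjugated argument of $Y$ on the right-hand side of (\ref{invbilform1}) is a (Laurent) polynomial,
\begin{equation*}
e^{zL(1)}(-z^{-2})^{L(0)}u=-z^{-2}u-z^{-1}L(1)u.
\end{equation*}
Expanding $Y$ of this expression at $z^{-1}$ and matching the coefficient of $z^{-2}$ on the two sides of (\ref{invbilform1}) gives the general identity
\begin{equation*}
(u(1)v,w)=-(v,u(-1)w)-(v,(L(1)u)(-2)w)\qquad (v,w\in V).
\end{equation*}
Taking $w=t$ yields $\langle u,v\rangle=-(v,u(-1)t)-(v,(L(1)u)(-2)t)$, so everything reduces to killing that trailing correction term.

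The main obstacle is therefore the vanishing $(L(1)u)(-2)t=0$, and I would prove the stronger statement that $a(-2)t=0$ for every $a\in V_0$. Using the decomposition $V_0=\mathbb{C}\mathbf{1}\oplus J$, it suffices to treat the two pieces separately. The vacuum part contributes nothing since $\mathbf{1}(-2)=0$. For $b\in J$, apply skew-symmetry $Y(b,z)t=e^{zL(-1)}Y(t,-z)b$ and read off the coefficient of $z^{1}$; because $t\in V_0$ and $b\in V_0$, all modes $t(i-2)b$ with $i\geq 2$ lie in $V_{1-i}=0$ by $\mathbb{N}$-gradedness. Only two terms survive:
\begin{equation*}
b(-2)t=-t(-2)b+L(-1)t(-1)b.
\end{equation*}
Lemma \ref{lemmat2J} gives $t(-2)b=0$, while $t(-1)b=t\cdot b=0$ because $t\in T=\mathrm{Ann}_{V_0}(J)$. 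Both terms vanish, hence $b(-2)t=0$ and, by linearity, $a(-2)t=0$ for all $a\in V_0$. Specialising to $a=L(1)u$ establishes (\ref{newinn}).

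For the radical characterisation, note that $u(-1)t\in V_1$ for $u\in V_1$, and that the restriction of $(\ ,\ )$ to $V_1$ is nondegenerate (by the orthogonal decomposition $V=\perp_n V_n$ recorded in Section~3). Thus $u\in \mathrm{rad}\,\langle\ ,\ \rangle$ iff $(v,u(-1)t)=0$ for every $v\in V_1$, iff $u(-1)t=0$, completing the proof.
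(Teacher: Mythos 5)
Your proposal is correct and follows essentially the same route as the paper: apply the invariance relation (\ref{invbilform1}) with $w=t$ to get $\langle u,v\rangle=-(v,u(-1)t)-(v,(L(1)u)(-2)t)$, then kill the correction term by writing $L(1)u\in\mathbb{C}\mathbf{1}\oplus J$ and using skew-symmetry together with $t(-2)J=0$ (Lemma \ref{lemmat2J}) and $t(-1)J=0$, and finally read off the radical from the nondegeneracy of $(\ ,\ )$ on $V_1$. The only cosmetic difference is that you state the intermediate fact in the slightly stronger form $a(-2)t=0$ for all $a\in V_0$.
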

\noindent
{\bf Proof.} The first statement implies the second, so it suffices to establish
(\ref{newinn}). To this end, we apply (\ref{invbilform1}) with $u, v \in V_1, w = t$
to find that
\begin{eqnarray*}
\langle u, v \rangle &=& (u(1)v, t) = -(v, u(-1)t)-(v, (L(1)u)(-2)t).
\end{eqnarray*}
On the other hand, $L(1)u \in V_0 = \mathbb{C}\mathbf{1}\oplus J$, so that
$(L(1)u)(-2)t=a(-2)t = -t(-2)a+L(-1)t(-1)a$ for some $a\in J$. Since
$t(-1)a=t(-2)a=0$ (the latter equality thanks to Lemma \ref{lemmat2J}), the final term of
the previous display vanishes, and what remains is (\ref{newinn}).\ The Lemma is proved.$\hfill \Box$

\bigskip
We introduce
\begin{eqnarray*}
&& \ \ \ \ \ \ \ \ \ \ \ \ \ \ \ \  P:=\{u \in V_1 \ | \ \langle u, M \rangle = 0\},\\
&&Ann_{V_1}(t(-1)) := \{ u\in V_1 \ | \ t(-1)u=0\}.
 \end{eqnarray*}

\begin{lemma}\label{lemmaP} We have
\begin{eqnarray*}
P = \{u \in V_1 \ | \ t(-1)u \in L(-1)T\},
\end{eqnarray*}
and this is a $2$-sided ideal of $V_1$.
\end{lemma}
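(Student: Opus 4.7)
The strategy is to translate the definition of $P$ into a condition on $u(-1)t$, pass from $u(-1)t$ to $t(-1)u$ via skew-symmetry, and then verify the ideal property using the invariance of $\langle\ ,\ \rangle$ together with skew-symmetry again.

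By Lemma \ref{lemmarad} I have $\langle u,v\rangle = -(v,u(-1)t)$, so $u\in P$ if and only if $u(-1)t$ is orthogonal to every element of $M$ with respect to the restriction of $(\ ,\ )$ to $V_1$. Since Lemma \ref{lemmaM} gives $M = (L(-1)T)^{\perp}$ and $(\ ,\ )$ is nondegenerate on $V_1$, taking perps yields $M^{\perp}=L(-1)T$; hence $P = \{u\in V_1 \mid u(-1)t \in L(-1)T\}$. To replace $u(-1)t$ by $t(-1)u$ I would expand the skew-symmetry $Y(u,z)t = e^{zL(-1)}Y(t,-z)u$ at $z^0$. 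Because $V$ is $\mathbb{N}$-graded only the terms $n=-1$ and $n=0$ survive, and using $t(0)u=-u(0)t$ this gives
\[
u(-1)t \;=\; t(-1)u \,+\, L(-1)\bigl(u(0)t\bigr).
\]
Since $u(0)T\subseteq T$ by Lemma \ref{lemmaM}, the correction lies in $L(-1)T$, so the conditions $u(-1)t\in L(-1)T$ and $t(-1)u\in L(-1)T$ are equivalent, which yields the asserted description of $P$.

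For the ideal property I fix $u\in P$ and $v\in V_1$ and check that both $u(0)v$ and $v(0)u$ lie in $P$. For $u(0)v\in P$ the invariance of $\langle\ ,\ \rangle$ from Proposition \ref{propbilform} gives $\langle u(0)v,w\rangle = \langle u,v(0)w\rangle$ for $w\in M$, and $v(0)w\in M$ because $M$ is a two-sided ideal, so the right-hand side vanishes by $u\in P$. For $v(0)u\in P$ I use skew-symmetry on $V_1$ in the form $v(0)u + u(0)v = L(-1)(u(1)v)\in L(-1)V_0 = N$; by Proposition \ref{propbilform}, $N\subseteq \mathrm{rad}\langle\ ,\ \rangle\subseteq P$, so $v(0)u\in -u(0)v + N\subseteq P$. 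The main subtlety I anticipate is the passage from $u(-1)t$ to $t(-1)u$, which is not literally an equality but differs by an element that fortunately lands in $L(-1)T$; the right-action part of the ideal property also requires the indirect $N$-containment, since invariance alone gives information only about the left action.
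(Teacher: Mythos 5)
Your proof is correct and follows essentially the same route as the paper's: the same translation of $u\in P$ into $u(-1)t\in M^{\perp}=L(-1)T$ via $\langle u,m\rangle=-(m,u(-1)t)$, the same skew-symmetry correction $u(-1)t=t(-1)u+L(-1)(u(0)t)$ with $u(0)t\in T$, and the same two-step ideal argument (invariance for $u(0)v$, then $v(0)u\equiv -u(0)v \bmod N$ with $N\subseteq P$).
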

\noindent
{\bf Proof.} Let $m \in M, u \in V_1$. By (\ref{newinn}) we have
\begin{eqnarray*}
\langle u, m \rangle =-(m, u(-1)t).
\end{eqnarray*}
But by Lemma \ref{lemmaM} we have $M^{\perp}=L(-1)T$.  Hence, the last display implies
that $P = \{u \in V_1 \ | \ u(-1)t \in L(-1)T\}.$
Furthermore, we have $u(-1)t = t(-1)u-L(-1)t(0)u=t(-1)u+L(-1)u(0)t \in t(-1)u+L(-1)T$
by Lemma \ref{lemmaM} once more. Thus $u(-1)t \in L(-1)T$ if, and only if,
$t(-1)u \in L(-1)T$. The first assertion of the Lemma follows.

\medskip
Because $N\subseteq$ rad$\langle \ , \ \rangle$ thanks to Proposition \ref{propbilform},
then certainly $N \subseteq P$. So in order to show
that $P$ is a $2$-sided ideal in $V_1$, it suffices to show that it is a right ideal. To see this, let
$a\in P, n \in M,  u\in V_1$. By Lemma \ref{lemmaM} and Proposition \ref{propbilform}  we find that
\begin{eqnarray*}
\langle a(0)u, n\rangle = \langle a, u(0)n\rangle \in \langle a, M\rangle = 0.
\end{eqnarray*}
This completes the proof of the Lemma. $\hfill \Box$

\begin{lemma}\label{lemmaAR} We have $M\cap Ann_{V_1}(t(-1))=M\cap rad\langle \ , \ \rangle$.
\end{lemma}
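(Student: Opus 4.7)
My plan is to exploit the skew-symmetry identity that already appeared in the proof of Lemma \ref{lemmaP}, namely
\[
u(-1)t \;=\; t(-1)u + L(-1)u(0)t,
\]
combined with Lemma \ref{lemmarad}'s characterization $\mathrm{rad}\langle \ ,\ \rangle = \{u\in V_1\mid u(-1)t=0\}$. The observation that drives the whole argument is that the ``correction term'' $L(-1)u(0)t$ vanishes exactly when $u\in M$, since $M$ is by definition the left annihilator of $t$ in $V_1$ for the zero-mode action.

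Concretely, first I would restrict attention to an arbitrary $u\in M$. By the defining property of $M$ we have $u(0)t=0$, so the displayed identity collapses to
\[
u(-1)t \;=\; t(-1)u.
\]
Now Lemma \ref{lemmarad} says $u\in \mathrm{rad}\langle \ ,\ \rangle$ iff $u(-1)t=0$, while by definition $u\in \mathrm{Ann}_{V_1}(t(-1))$ iff $t(-1)u=0$. These two conditions coincide under the simplified identity, which gives both inclusions $M\cap \mathrm{Ann}_{V_1}(t(-1))\subseteq M\cap \mathrm{rad}\langle \ ,\ \rangle$ and the reverse inclusion simultaneously.

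Thus there is essentially no obstacle; the lemma is a direct corollary of the already-established skew-symmetry identity and Lemma \ref{lemmarad}. The only thing worth double-checking is that the identity $u(-1)t = t(-1)u + L(-1)u(0)t$ is valid for all $u\in V_1$ (not just those in $P$), but inspection of its derivation in Lemma \ref{lemmaP} shows it only uses skew-symmetry together with $t\in V_0$ and the $\mathbb{N}$-grading (which kills $t(k)u$ for $k\ge 1$), both of which hold unconditionally here. So the proof reduces to one line of substitution plus an invocation of Lemma \ref{lemmarad}.
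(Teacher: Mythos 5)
Your proof is correct and is essentially the paper's own argument: both rest on the skew-symmetry identity $t(-1)u = u(-1)t - L(-1)u(0)t$, which for $u\in M$ (where $u(0)t=0$) collapses to $t(-1)u = u(-1)t$, after which Lemma \ref{lemmarad} gives the equivalence. No gaps.
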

\noindent
{\bf Proof.} If $u \in M$ then $t(-1)u = u(-1)t-L(-1)u(0)t = u(-1)t$.
Hence for $u \in M$, we have $u \in Ann_{V_1}(t(-1)) \Leftrightarrow
u(-1)t=0 \Leftrightarrow u \in rad\langle \ , \ \rangle$, where we used
Lemma \ref{lemmarad} for the last equivalence. The Lemma follows.
$\hfill \Box$

\begin{lemma}\label{lemmaRAM} At least one of the containments
$rad\langle \ , \ \rangle \subseteq M,\ Ann_{V_1}(t(-1))\subseteq M$ holds.
\end{lemma}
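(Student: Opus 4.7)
The plan is to prove the contrapositive of the following equivalent reformulation: if $rad\langle\ ,\ \rangle \not\subseteq M$, then $Ann_{V_1}(t(-1)) \subseteq M$. The driving observation is that $M$ has codimension one in $V_1$ (Lemma \ref{lemmaM}), so the hypothesis $rad\langle\ ,\ \rangle \not\subseteq M$ automatically upgrades to the decomposition $V_1 = M + rad\langle\ ,\ \rangle$.

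My first intermediate goal is to strengthen this to the identity $P = rad\langle\ ,\ \rangle$. The inclusion $rad\langle\ ,\ \rangle \subseteq P$ is tautological. For the reverse, given $u \in P$ and any $v \in V_1$, I decompose $v = x + r$ with $x \in M$ and $r \in rad\langle\ ,\ \rangle$; then $\langle u, v\rangle = \langle u, x\rangle + \langle u, r\rangle$, the first term vanishing since $u \in P$ and the second by symmetry of $\langle\ ,\ \rangle$ (Proposition \ref{propbilform}) together with $r \in rad\langle\ ,\ \rangle$. Hence $u \in rad\langle\ ,\ \rangle$.

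With that in hand, I would take an arbitrary $v \in Ann_{V_1}(t(-1))$ and show $v \in M$. By Lemma \ref{lemmaM} we have $v(0)t \in T = \mathbb{C}t$, so $v(0)t = \ell(v)t$ for some scalar $\ell(v)$, with $v \in M$ precisely when $\ell(v) = 0$. The skew-symmetry identity used in the proof of Lemma \ref{lemmaP} gives $v(-1)t = t(-1)v + L(-1)v(0)t$, which under the hypothesis $t(-1)v = 0$ reduces to $v(-1)t = \ell(v) L(-1)t$. For any $x \in M = (L(-1)T)^\perp$, Lemma \ref{lemmarad} then yields
\begin{eqnarray*}
\langle v, x\rangle = -(x, v(-1)t) = -\ell(v)(x, L(-1)t) = 0,
\end{eqnarray*}
so $v \in P = rad\langle\ ,\ \rangle$. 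Applying Lemma \ref{lemmarad} once more forces $v(-1)t = 0$, whence $\ell(v) L(-1)t = 0$. Since $t \in J \setminus \{0\}$ and $L(-1)|_J$ is injective by (\ref{LJiso}), we have $L(-1)t \neq 0$, so $\ell(v) = 0$ and therefore $v \in M$.

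The substantive step is the identification $P = rad\langle\ ,\ \rangle$ under the hypothesis $rad\langle\ ,\ \rangle \not\subseteq M$; everything else is a direct assembly of ingredients already recorded in Lemmas \ref{lemmaM}, \ref{lemmarad}, \ref{lemmaP} and Proposition \ref{propbilform}, so I do not anticipate any serious technical obstacle beyond keeping careful track of the skew-symmetry expansion.
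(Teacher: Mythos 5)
Your proof is correct. Both you and the paper argue via the contrapositive, but the internal routes differ. The paper fixes a single witness $v\in rad\langle\ ,\ \rangle\setminus M$, normalizes $v(0)t=t$, derives $t(-1)v=-L(-1)t$ from $v(-1)t=0$, and then transfers the condition $t(-1)u=0$ across the pairing via the adjointness relation $(t(-1)v,u)=(v,t(-1)u)$ to land $u$ in $(L(-1)t)^{\perp}=M$. You instead exploit the codimension-one statement of Lemma \ref{lemmaM} to get the decomposition $V_1=M+rad\langle\ ,\ \rangle$, upgrade it to the structural identity $P=rad\langle\ ,\ \rangle$, and then run every $v\in Ann_{V_1}(t(-1))$ through the chain $v\in P=rad\langle\ ,\ \rangle\Rightarrow v(-1)t=0\Rightarrow \ell(v)L(-1)t=0\Rightarrow \ell(v)=0$, using only formula (\ref{newinn}) and the skew-symmetry identity rather than the adjointness of $t(-1)$. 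The paper's computation is shorter and more self-contained; your version is slightly longer but isolates the reusable fact that $rad\langle\ ,\ \rangle\not\subseteq M$ forces $P=rad\langle\ ,\ \rangle$, which is precisely the mechanism behind case (ii) of Proposition \ref{relari}, and it additionally shows that under that hypothesis $Ann_{V_1}(t(-1))$ sits inside both $M$ and $rad\langle\ ,\ \rangle$, consistent with Lemma \ref{lemmaAR}. All the ingredients you invoke (Lemmas \ref{lemmaM}, \ref{lemmarad}, \ref{lemmaP}, Proposition \ref{propbilform}, the injectivity of $L(-1)$ on $J$) are used correctly.
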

\noindent
{\bf Proof.} Suppose that we can find $v \in rad\langle \ , \ \rangle \setminus{M}$.
Then $v(-1)t=0$ by Lemma \ref{lemmarad}, and $v(0)t = \lambda t$ for a
scalar $\lambda\not= 0$. Rescaling $v$, we may, and shall, take $\lambda=1$. Then
\begin{eqnarray*}
0 &=& v(-1)t = t(-1)v-L(-1)t(0)v \\
&=& t(-1)v+L(-1)v(0)t = t(-1)v+L(-1)t.
\end{eqnarray*}
Then for $u \in Ann_{V_1}(t(-1))$ we have
\begin{eqnarray*}
(L(-1)t, u)=-(t(-1)v, u) = -(v, t(-1)u)=0,
\end{eqnarray*}
which shows that $Ann_{V_1}(t(-1))\subseteq (L(-1)t)^{\perp} = M$ (using Lemma \ref{lemmaM}). This completes the proof of the
Lemma. $\hfill \Box$

\bigskip
The next result almost pins down
the radical of $\langle \ , \rangle$.

\begin{proposition}\label{relari} Exactly one of the following holds:
\begin{eqnarray*}
&&\ \ (i)\ Ann_{V_1}(t(-1))=rad\langle \ , \ \rangle>\subset P; \\
&&\ (ii)\ Ann_{V_1}(t(-1))\subset rad\langle \ , \ \rangle=P; \\
&&(iii)\ rad\langle \ , \ \rangle\subset Ann_{V_1}(t(-1))=P.
\end{eqnarray*}
In each case, the containment $\subset $ is one in which the smaller subspace
has \emph{codimension one} in the larger subspace.
\end{proposition}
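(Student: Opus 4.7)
The plan is to reframe the problem in terms of two linear operators $\psi,\psi'\colon V_1\to V_1$ given by $\psi(u)=t(-1)u$ and $\psi'(u)=u(-1)t$.\ By definition $Ann_{V_1}(t(-1))=\ker\psi$, and Lemma \ref{lemmarad} gives $rad\langle\ ,\ \rangle=\ker\psi'$; Lemma \ref{lemmaP} identifies $P=\psi^{-1}(L(-1)T)=(\psi')^{-1}(L(-1)T)$.\ Since $L(-1)T$ is one-dimensional (spanned by $L(-1)t$), the natural injections $P/\ker\psi\hookrightarrow L(-1)T$ and $P/\ker\psi'\hookrightarrow L(-1)T$ show that both $Ann_{V_1}(t(-1))$ and $rad\langle\ ,\ \rangle$ sit inside $P$ with codimension at most one.

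The key step is to establish that both $\psi$ and $\psi'$ are self-adjoint on $V_1$ with respect to the nondegenerate restriction of $(\ ,\ )$ to $V_1$.\ For $\psi$ this is the special case of (\ref{invbilform2}) applied to the quasiprimary vector $t\in V_0$ with $n=-1$.\ For $\psi'$ one combines the identity $\langle u,v\rangle=-(v,u(-1)t)$ of Lemma \ref{lemmarad} with the symmetry of $\langle\ ,\ \rangle$ established in Proposition \ref{propbilform}.\ Self-adjointness together with nondegeneracy yields $\psi(V_1)=(\ker\psi)^{\perp}$ and $\psi'(V_1)=(\ker\psi')^{\perp}$ in $V_1$.\ Since $M=(L(-1)T)^{\perp}$ by Lemma \ref{lemmaM}, this delivers the clean biconditionals
\begin{eqnarray*}
\dim P/Ann_{V_1}(t(-1))=1&\Longleftrightarrow&L(-1)t\in\psi(V_1)\Longleftrightarrow Ann_{V_1}(t(-1))\subseteq M,\\
\dim P/rad\langle\ ,\ \rangle=1&\Longleftrightarrow&L(-1)t\in\psi'(V_1)\Longleftrightarrow rad\langle\ ,\ \rangle\subseteq M.
\end{eqnarray*}

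To finish, I would use the identity $u(-1)t=t(-1)u+L(-1)u(0)t$ together with the injectivity of $L(-1)$ on $J$ (cf.\ (\ref{LJiso})) to check that $Ann_{V_1}(t(-1))\subseteq rad\langle\ ,\ \rangle$ iff $Ann_{V_1}(t(-1))\subseteq M$, and symmetrically for the reverse inclusion.\ Lemma \ref{lemmaRAM} then guarantees that $Ann_{V_1}(t(-1))$ and $rad\langle\ ,\ \rangle$ are always comparable.\ The three mutually exclusive outcomes---$rad\langle\ ,\ \rangle\subsetneq Ann_{V_1}(t(-1))$, the reverse strict containment, or equality---correspond via the displayed biconditionals to cases (iii), (ii), (i) of the Proposition respectively, and each proper containment is forced to be of codimension one by the bound from the first paragraph.\ The main obstacle I anticipate is verifying the self-adjointness of $\psi'$; once that is in place, the rest is simply organizing the consequences of the machinery already developed in Section 4.
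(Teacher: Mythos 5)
Your argument is correct, and it rests on the same foundation as the paper's proof --- Lemmas \ref{lemmaM}, \ref{lemmaP}, \ref{lemmarad} and \ref{lemmaRAM}, plus the self-adjointness of $t(-1)$ with respect to $(\ ,\ )$ on $V_1$ (which the paper uses implicitly in the computation $(L(-1)t,u)=(t(-1)v,u)=(v,t(-1)u)$) --- but it is organized differently in two respects. First, you treat $\psi(u)=t(-1)u$ and $\psi'(u)=u(-1)t$ symmetrically and extract the two clean biconditionals $\dim P/\ker\psi=1\Leftrightarrow\ker\psi\subseteq M$ and $\dim P/\ker\psi'=1\Leftrightarrow\ker\psi'\subseteq M$ from $\mathrm{im}\,\psi=(\ker\psi)^{\perp}$; the paper only runs the $\psi$-half of this explicitly and handles the $rad\langle\ ,\ \rangle$ side through Lemma \ref{lemmaAR} and the definition of $P$ as $M^{\perp}$ with respect to $\langle\ ,\ \rangle$. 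Second, and more substantively, you exclude the degenerate configuration $Ann_{V_1}(t(-1))=rad\langle\ ,\ \rangle=P$ directly from the biconditionals together with Lemma \ref{lemmaRAM} (both subspaces would have to fail to lie in $M$), whereas the paper passes to the quotient $V_1/rad\langle\ ,\ \rangle$, uses nondegeneracy of the induced form there, and counts the dimension of $(M/rad\langle\ ,\ \rangle)^{\perp}=P/rad\langle\ ,\ \rangle$. Your route also re-derives the content of Lemma \ref{lemmaAR} from the identity $u(-1)t=t(-1)u+L(-1)u(0)t$ and the injectivity of $L(-1)$ on $J$ in the sharper form ``$\ker\psi\subseteq\ker\psi'$ iff $\ker\psi\subseteq M$'' (and its mirror), which is exactly what is needed to see that the two kernels are always comparable. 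The net effect is a proof with the same inputs but fewer case distinctions and with the linear-algebraic mechanism (self-adjoint operators, image equals kernel-perp) made explicit; the paper's version, by contrast, localizes all the work in the element $v$ with $t(-1)v=L(-1)t$ and in the quotient-form argument. Both are complete; yours is arguably easier to audit because each of the three cases falls out of the trichotomy for comparable subspaces rather than from a branching construction.
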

\noindent
{\bf Proof.} First note from Lemma \ref{lemmaP} that $Ann_{V_1}(t(-1))\subseteq P$;
indeed, since $\dim L(-1)T=1$ then the codimension is at most $1$. Also, it is clear from
the definition of $P$ that $rad\langle \ , \ \rangle \subseteq P$.

\medskip
Suppose first that the containment $Ann_{V_1}(t(-1))\subset P$ is  \emph{proper}.\ 
Then we can choose
 $v\in P\setminus{Ann_{V_1}t(-1)}$ such that $t(-1)v=L(-1)t\not= 0$. If
 $u \in Ann_{V_1}(t(-1))$ we then obtain
 \begin{eqnarray*}
(L(-1)t, u) = (t(-1)v, u) = (v, t(-1)u) = 0,
\end{eqnarray*}
whence $u \in (L(-1)t)^{\perp} = M$ by Lemma \ref{lemmaM}. This shows that
$Ann_{V_1}(t(-1))\subseteq M$.\ By Lemma \ref{lemmaAR} it follows that
$Ann_{V_1}(t(-1)) = M\cap rad\langle \ , \ \rangle$.\ Now if also $rad\langle \ , \rangle \subseteq M$
then Case 1 of the Theorem holds. On the other hand, if
$rad\langle \ , \rangle \not\subseteq M$ then we have $Ann_{V_1}(t(-1))\subset rad\langle \ , \rangle
\subseteq P$ and the containment is proper; since $Ann_{V_1}(t(-1))$ has codimension at most
$1$ in $P$ then we are in Case 2 of the Theorem.

\medskip
It remains to consider the case that $Ann_{V_1}(t(-1))=P \supseteq rad\langle \ , \rangle$.
Suppose the latter containment is proper. Because $M$ has codimension $1$ in $V_1$, it follows from
Lemma \ref{lemmaAR} that $rad\langle \ , \ \rangle$ has codimension exactly $1$ in 
$Ann_{V_1}(t(-1))$,  whence Case 3 of the Theorem holds.\ The only remaining possibility is that
$Ann_{V_1}(t(-1))=P = rad\langle \ , \rangle$, and we have to show that this  cannot
occur. By Lemma \ref{lemmaRAM} we must have $rad\langle \ , \ \rangle \subseteq M$, 
so that $M/rad\langle \ , \ \rangle$ is a subspace of codimension $1$ in
the nondegenerate space $V_1/rad\langle \ , \ \rangle$ (with respect to $\langle \ , \ \rangle$).
But then the space orthogonal to $M/rad\langle\ , \ \rangle$, that is $P/rad\langle \ , \ \rangle$,
is $1$-dimensional. This contradiction completes the proof of the Theorem.
$\hfill \Box$

\begin{remark}\label{remQ} In all cases that we know of, it is (i) of Proposition \ref{relari} that  holds.\
This circumstance leads us to raise the question, whether this is always the case?\
We shall later see several rather general situations where this is so.\ At the same time, we
will see how $rad \langle \ , \ \rangle$ is related to the  Leibniz algebra 
structure of $V_1$.
\end{remark}

\section{The de Rham structure of shifted vertex operator algebras}\label{Sshift}
In the next few Sections we consider $\mathbb{N}$-graded vertex operator algebras that are
\emph{shifts} of vertex operator algebras of CFT-type (\cite{DM3}).

\medskip
Let us first recall the idea of a \emph{shifted} vertex operator algebra (\cite{DM3}).
 Suppose that
$W = (W, Y, \mathbf{1}, \omega')$ is an 
$\mathbb{N}$-graded vertex operator algebra of central charge $c'$ and $Y(\omega', z) =: \sum_n L'(n)z^{-n-2}$.\  
It is easy to see that for any $h\in W_1$, the state $\omega_h':= \omega'+L'(-1)h$ is also a Virasoro vector,
i.e., the modes of $\omega'_h$ satisfy the relations of a Virasoro algebra of some central charge
$c'_h$ (generally different from $c'$).\ (The proof of Theorem 3.1 in \cite{DM3} works in the more general result stated here.)\ Now consider the quadruple
\begin{eqnarray}\label{shiftVOA}
W^h := (W, Y, \mathbf{1}, \omega'_h),
\end{eqnarray}
which is generally \emph{not} a vertex operator algebra.\ If it \emph{is}, we call it a \emph{shifted}
vertex operator algebra.

\medskip
We emphasize that in this situation, $W$ and $W^h$ share the \emph{same} underlying Fock space,
the \emph{same} set of vertex operators, and the \emph{same} vacuum vector.\ Only the Virasoro
vectors differ, although this has a dramatic effect because it means that $W$ and $W^h$ have
quite different conformal gradings, so that the two vertex operator algebras seem
quite different.

\medskip
Now let $V=(V, Y, \mathbf{1}, \omega)$ be a simple, self-dual $\mathbb{N}$-graded vertex operator algebra as in the previous two Sections. The assumption of this Section is that there is a self-dual VOA $W$ of CFT-type such that
$W^h=V$.\ That is, $V$ arises as a shift of a vertex operator algebra of CFT-type as described above.\
Thus $h\in W_1$ and
\begin{eqnarray*}
(W, Y, \mathbf{1}, \omega'_h) = (V, Y, \mathbf{1}, \omega).
\end{eqnarray*}
(Note that by definition, $W$ has CFT-type if
$W_0 = \mathbb{C}\mathbf{1}$.\ In this case, $W$ is necessarily $\mathbb{N}$-graded
by \cite{DM3}, Lemma 5.2.) Although the two vertex operator algebras share the same Fock space,
it is convenient to distinguish between them, and we shall do so in what follows.\ We sometimes refer to
$(W, h, V)$ as a \emph{shifted triple}.\ 
Examples are constructed in \cite{DM3}, and it is evident from those calculations that there are large numbers of shifted triples.

\medskip
There are a number of consequences of the circumstance that $(W, h, V)$ is a shifted triple.\
We next discuss some that we will need.\ Because $\omega = \omega_h'=\omega'+L'(-1)h$ then
\begin{eqnarray}\label{Lshift}
L(n) = (\omega'+L(-1)h)(n+1) = L'(n)-(n+1)h(n),
\end{eqnarray}
in particular $L(0)=L'(0)-h(0)$.\
Because $h \in W_1$, we also have $[L'(0), h(0)]=0$.\ Then because $L'(0)$ is semisimple with integral eigenvalues, the same is true of $h(0)$.\ Set
\begin{eqnarray*}
W_{m, n} :=\{w \in W\ | \ L'(0)w=mw, h(0)w=nw\}.
\end{eqnarray*}
Hence,
\begin{eqnarray*}
V_n =  \oplus_{m\geq 0} W_{m, m-n}, 
\end{eqnarray*}
and in particular
\begin{eqnarray}
V_0 &=& \mathbb{C}\mathbf{1}\oplus_{m\geq 1} W_{m, m}, \label{h0decomp0} \\
V_1 &=& \oplus_{m\geq 1} W_{m, m-1}.  \label{h1decomp0}
\end{eqnarray}
(\ref{h0decomp0}) follows because $W$ is of CFT-type, so that
$W_{0, 0}=\mathbb{C}\mathbf{1}$ and $W_{m, n}=0$ for $n<0$.

\medskip
We have $L(0)h = L'(0)h-h(0)h$.\ Because $W$ is of CFT-type then $W_1$ is a Lie algebra with respect
to the $0^{th}$ bracket, and in particular $h(0)h=0$.\ Therefore, $L(0)h=h$, that is
$h\in V_1$.\ Thus $h(0)$ induces a derivation in its action on the commutative algebra $V_0$.\
The decomposition (\ref{h0decomp0}) is one of $h(0)$-eigenspaces, and it confers on $V_0$ 
a structure that looks very much like
the de Rham cohomology of a (connected) complex manifold equipped with its
Poincar\'{e} duality.\ This is what we mean by the \emph{de Rham structure of $V_0$}.\
Specifically, we have

\begin{theorem}\label{derham} Set $A^{\lambda}:= W_{\lambda, \lambda}$, the $\lambda$-eigenspace for
the action of $h(0)$ on $V_0$. Then the following hold;
\begin{eqnarray*}
&&\ \ \ (i)\ \mbox{$A = \oplus_{\lambda} A^{\lambda}$, and if $A^{\lambda}\not= 0$ then $\lambda$ is a \emph{nonnegative} integer.} \\
&&\ \ (ii)\ A^0 = \mathbb{C}\mathbf{1}. \\
&&\ (iii)\ \mbox{Let}\ h(1)h = (\nu/2)\mathbf{1}.\ \mbox{Then}\ A^{\nu}=T=\mathbb{C}t. \\
&&\ \ (iv)\ A^{\lambda}(-1)A^{\mu} \subseteq A^{\lambda+\mu}.\\
&&\ \ \  (v)\  A^{\lambda} \perp A^{\mu}=0\ \mbox{if}\ \lambda+\mu\not= \nu. \\
&&\ \  (vi)\ \mbox{If $\lambda+\mu = \nu$,  the bilinear form $( \ , \ )$ induces a perfect pairing}\\
&&\ \ \ \ \ \ \ \   A^{\lambda} \times A^{\mu} \rightarrow \mathbb{C}. 
\end{eqnarray*}
(Here, $( \ , \ )$ is the invariant bilinear form on $V$, and $T$ the unique
minimal ideal of $V_0$, as in Sections $4$ and $5$.)
\end{theorem}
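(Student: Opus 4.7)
\medskip\noindent\textbf{Proof plan.} The plan is to prove the six statements in the order (i), (ii), (iv), (v), (vi), (iii), with most of the work concentrated in (v). Parts (i) and (ii) are immediate from the decomposition (\ref{h0decomp0}): the spaces $W_{m,m}$ are the joint $L'(0)$- and $h(0)$-eigenspaces on the common Fock space, so the $h(0)$-eigenvalues on $V_0$ are nonnegative integers, and $A^0 = W_{0,0} = W_0 = \mathbb{C}\mathbf{1}$ because $W$ is of CFT-type. For (iv), the commutator formula specializes to $[h(0), u(-1)] = (h(0)u)(-1)$, so $h(0)$ acts as a derivation of the $(-1)$-product on $V_0$; evaluating on eigenvectors $u \in A^\lambda$, $v \in A^\mu$ yields $u(-1)v \in A^{\lambda+\mu}$.

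The technical heart of the theorem is (v), for which the goal is to establish the identity
\begin{equation*}
(h(0)v, w) + (v, h(0)w) = \nu(v, w), \qquad v, w \in V.
\end{equation*}
Since $W$ is self-dual of CFT-type, Li's theorem \cite{L} forces $L'(1)W_1$ to have codimension $1$ in $W_0 = \mathbb{C}\mathbf{1}$, hence $L'(1)W_1 = 0$; in particular $L'(1)h = 0$. Combined with $h(1)h = (\nu/2)\mathbf{1}$ and $L(1) = L'(1) - 2h(1)$ from (\ref{Lshift}), this yields $L(1)h = -\nu\mathbf{1}$. Although $h$ is therefore \emph{not} quasi-primary in $V$, the general invariance relation (\ref{invbilform1}) applies directly: using $L(0)h = h$ and $L(1)^2 h = 0$, a short calculation gives $e^{zL(1)}(-z^{-2})^{L(0)}h = -z^{-2}h + z^{-1}\nu\mathbf{1}$, and extracting the coefficient of $z^{-1}$ from (\ref{invbilform1}) with $u = h$ produces the displayed identity. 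Applied to eigenvectors $v \in A^\lambda, w \in A^\mu$, it reads $(\lambda + \mu - \nu)(v, w) = 0$, which proves (v).

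Part (vi) follows because the restriction of $(\, ,\, )$ to $V_0$ is nondegenerate (by the orthogonality of the conformal grading noted in Section $3$); by (v) the only potentially non-zero pairings between eigenspaces occur for $A^\lambda \times A^{\nu-\lambda}$, so each such pairing must be perfect, forcing $0 \leq \lambda \leq \nu$ whenever $A^\lambda \neq 0$ and $\dim A^\nu = \dim A^0 = 1$. Finally for (iii), combine (iv) with this bound: for $u \in A^\nu$ and $v \in A^\mu$ with $\mu \geq 1$, $u(-1)v \in A^{\nu+\mu} = 0$. The subspace $\oplus_{m \geq 1}A^m$ is an ideal of $V_0$ by (iv), complementary to $\mathbb{C}\mathbf{1}=A^0$; since $V_0$ is local with residue field $\mathbb{C}$, this forces $J = \oplus_{m \geq 1}A^m$. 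Hence $A^\nu \cdot J = 0$, so $A^\nu \subseteq Ann_{V_0}(J) = T$ by (\ref{Tchar}), and equality follows from $\dim A^\nu = 1$. I expect the main obstacle to be the computation in (v): correctly handling the non-quasi-primary vector $h$ inside the invariance formula and tracking the formal $z$-expansion carefully enough to isolate the right mode identity.
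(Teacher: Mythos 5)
Your proposal is correct and follows essentially the same route as the paper: parts (i), (ii), (iv) are the same restatement/derivation arguments, and the key computation for (v)--(vi) is exactly the paper's residue calculation with $e^{zL(1)}(-z^{-2})^{L(0)}h=-z^{-2}h+z^{-1}\nu\mathbf{1}$, using $L'(1)h=0$ from self-duality of the CFT-type $W$. The only (minor) divergence is in (iii), where you identify $A^{\nu}$ with $T$ via $J=\oplus_{m\geq 1}A^{m}$ and $T=Ann_{V_0}(J)$, whereas the paper gets there more directly by noting that $T$ is an $h(0)$-eigenspace (Lemma \ref{lemmaM}) and $(\mathbf{1},t)\neq 0$ forces its eigenvalue to be $\nu$; both arguments are valid.
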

\noindent
{\bf Proof.} (i) and (ii) are just restatements of the decomposition (\ref{h0decomp0}).

\medskip
Next we prove (iv). Indeed, because $h(0)$ is a derivation of the algebra $A$,
if $a \in A^{\lambda}, b \in A^{\mu}$, then
$h(0)a(-1)b = [h(0), a(-1)]b +a(-1)h(0)b = (h(0)a)(-1)b+\mu a(-1)b= (\lambda+\mu)a(-1)b.$
Part (iv) follows.

\medskip
Next we note that because $W$ is of CFT-type then certainly $h(1)h = (\nu/2)\mathbf{1}$
for some scalar $\nu$. Now let $a, b$ be as in the previous paragraph. Then
\begin{eqnarray*}
\lambda(a, b) &=& (h(0)a, b) = Res_z (Y(h, z)a, b)  \\
&=& Res_z(a, Y(e^{zL(1)}(-z^{-2})^{L(0)}h, z^{-1})b)\ \ (\mbox{by (\ref{invbilform1})})\\
&=& -Res_z z^{-2}(a, Y(h+zL(1)h, z^{-1})b) \\
&=&-(a, h(0)b)-(a, (L(1)h)(-1)b) \\
&=&-\mu(a, b)-(a, L'(1)h-2h(1)h)(-1)b) \\
&=&-\mu(a, b)+2(a, h(1)h)(-1)b) \\
&=&-\mu(a, b)+\nu(a, b).
\end{eqnarray*}
Here, we used the assumption that $W$ is self-dual and of CFT-type to conclude
that $L'(1)W_1=0$, and in particular $L'(1)h=0$. Thus we have obtained
\begin{eqnarray}\label{ip}
(\lambda+\mu-\nu)(a, b)=0.
\end{eqnarray}

If $\lambda+\mu\not= \nu$ then we must have $(a, b)=0$ for all choices of
$a, b$, and this is exactly what (v) says. Because the bilinear form $( \ , \ )$ is nondegenerate, it 
follows that it must induce a perfect pairing between $A^{\lambda}$ and $A^{\mu}$
whenever $\lambda+\mu=\nu$.\ So (vi) holds.

\medskip
Finally, taking $\lambda = 0$, we know that $A^0 = \mathbb{C}\mathbf{1}$ by (ii).
Thus $A^0$ pairs with $A^{\nu}$ and $\dim A^{\nu}=1$. Because
$(\mathbf{1}, t)\not= 0$ by (\ref{1tperp}), and $T$ is an eigenspace for $h(0)$
(Lemma \ref{lemmaM}), we see that $A^{\nu}=T$. This proves (iii), and completes the proof of
the Theorem. $\hfill \Box$

\section{The bilinear form in the shifted case}
We return to the issue, introduced in Section 5, of the nature of the radical 
of the bilinear form $\langle \ , \ \rangle$ for an $\mathbb{N}$-graded vertex operator algebra $V$, 
assuming now that $V$ is a shift of a simple, self-dual vertex operator algebra $W=(W, Y, \mathbf{1}, \omega')$ of CFT-type as in Section \ref{Sshift}.\ We will also assume that $\dim V_0\geq 2$.

\medskip
We continue to use the notations of Section $3-5$.\ We shall
see that the question raised in Remark \ref{remQ} has an affirmative answer in this case,
and that $rad\langle \ , \ \rangle/N$ is \emph{exactly} the nilpotent radical $N_1/N$ of the Lie algebra
$V_1/N$ when $W_1$ is reductive.\ The precise result is as follows.

\begin{theorem}\label{thmshift} We have
\begin{eqnarray}\label{inters}
 \oplus_{m\geq 2} W_{m, m-1} \subseteq Ann_{V_1}(t(-1)) = rad\langle \ , \ \rangle.
\end{eqnarray}
Moreover, if $W_1$ is \emph{reductive} then
\begin{eqnarray}\label{eqs}
 N_1=\oplus_{m\geq 2} W_{m, m-1} = Ann_{V_1}(t(-1))= rad\langle \ , \ \rangle.
\end{eqnarray}
\end{theorem}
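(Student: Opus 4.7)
My plan is to parallel the $h(0)$-analysis of Theorem \ref{derham} on $V_1$ and then exploit the resulting grading. Set $B^\lambda := V_1 \cap W_{\lambda+1,\lambda}$, so that $V_1 = \bigoplus_{\lambda \geq 0} B^\lambda$ and $\bigoplus_{m \geq 2} W_{m,m-1} = \bigoplus_{\lambda \geq 1} B^\lambda$.  Applying the invariance formula (\ref{invbilform1}) to $Y(h,z)$, and remembering that $L(1)h = L'(1)h - 2h(1)h = -\nu\mathbf{1}$ (so $h$ is \emph{not} quasiprimary in $V$), yields the identity
\[
(h(0)v, w) + (v, h(0)w) = \nu(v,w) \qquad (v,w \in V_1).
\]
Combined with nondegeneracy of $(\, ,\,)$ on $V_1$, this restricts $(\, ,\,)$ to perfect pairings $B^\lambda \times B^{\nu-\lambda} \to \mathbb{C}$; in particular $B^\lambda = 0$ outside $0 \leq \lambda \leq \nu$ and $\dim B^0 = \dim B^\nu$.

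The containment in (\ref{inters}) is then quick: for $u \in B^\lambda$ with $\lambda \geq 1$, both $t(-1)u \in B^{\lambda+\nu}$ and $u(0)t \in A^{\lambda+\nu}$ vanish by the above and Theorem \ref{derham}, and the skew-symmetry identity $u(-1)t = t(-1)u + L(-1)u(0)t$ then gives $u(-1)t = 0$, placing $u$ in $rad\langle\, ,\,\rangle$ by Lemma \ref{lemmarad}.  To identify $rad\langle\, ,\,\rangle$ exactly, I would observe that $\langle u,v\rangle = (u(1)v, t)$ with $u(1)v \in A^{m+k-2}$ for $u \in W_{m,m-1}, v \in W_{k,k-1}$; Theorem \ref{derham}(v) then forces $m=k=1$.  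On $B^0 = W_{1,0}$ one has $u(1)v = \beta(u,v)\mathbf{1}$ for the canonical invariant symmetric form $\beta$ on the Lie algebra $W_1$, so $\langle u,v\rangle = \beta(u,v)(\mathbf{1},t)$.  Self-duality of $W$ makes $\beta$ nondegenerate on $W_1$; its $W_1$-invariance makes each pair $(W_{1,\mu}, W_{1,-\mu})$ nondegenerate under $\beta$, so $\beta|_{W_{1,0}}$ is nondegenerate; combined with $(\mathbf{1},t)\neq 0$ from (\ref{1tperp}), this gives $rad\langle\, ,\,\rangle = \bigoplus_{\lambda \geq 1}B^\lambda$.

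The crucial reverse containment $Ann_{V_1}(t(-1)) \subseteq rad\langle\, ,\,\rangle$ I would prove as follows.  Lemma \ref{lemmarad} gives $(\phi(u),v) = -\langle u,v\rangle$ for the map $\phi: V_1 \to V_1, \phi(u):= u(-1)t$, so nondegeneracy of $\langle\cdot,\cdot\rangle$ on $B^0$ makes $\phi|_{B^0}: B^0 \to B^\nu$ injective, hence (since the dimensions agree) an isomorphism.  Given $u \in Ann_{V_1}(t(-1))$, decompose $u = u_0 + u_{\geq 1}$ along the $h(0)$-grading; then $t(-1)u_0 = 0$, so $\phi(u_0) = L(-1)u_0(0)t = c_{u_0}L(-1)t$ where $u_0(0)t =: c_{u_0}t$ by Lemma \ref{lemmaM}.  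Setting $u' := \phi^{-1}(L(-1)t)$ gives $u_0 = c_{u_0}u'$; linearity of $u \mapsto c_u$ then forces $c_{u_0}(1-c_{u'})=0$.  The hard part will be showing $c_{u'}\neq 1$: applying (\ref{invbilform1}) to $(u(0)t, \mathbf{1})$ for $u \in W_{1,0}$, and using $L(1)u = -2\beta(h,u)\mathbf{1}$, yields $c_u = 2\beta(h,u)$; pairing $\phi(u')=L(-1)t$ against $h$ and using $(L(-1)t, h) = (t, L(1)h) = -\nu(\mathbf{1},t)$ from (\ref{invbilform30}) gives $\beta(h,u') = \nu$, so $c_{u'} = 2\nu$.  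The hypothesis $\dim V_0 \geq 2$ forces $\nu \geq 1$ (otherwise Theorem \ref{derham} gives $\mathbb{C}\mathbf{1} = A^0 = A^\nu = \mathbb{C}t$, contradicting $t \in J$), so $c_{u'} = 2\nu \neq 1$; hence $c_{u_0}=0$, $u_0 = 0$, and $u \in \bigoplus_{\lambda \geq 1}B^\lambda$.

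For (\ref{eqs}) under the reductive assumption: the grading $[B^\lambda,B^\mu] \subseteq B^{\lambda+\mu}$ together with vanishing outside $[0,\nu]$ makes $\bigoplus_{\lambda \geq 1}B^\lambda$ a nilpotent two-sided ideal of the Leibniz algebra $V_1$, and the identity $u = \lambda^{-1}[h,u]$ for $u \in B^\lambda, \lambda \geq 1$ places it inside $[V_1,V_1]$.  Since $L(-1)$ commutes with $h(0)$ and kills $\mathbf{1}$, $N = L(-1)J$ lies in $\bigoplus_{\lambda \geq 1}B^\lambda$, so modulo $N$ we get $\bigoplus_{\lambda \geq 1}B^\lambda/N \subseteq [L,L]\cap\mathrm{rad}(L) = N_1/N$, where $L = V_1/N$.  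Conversely, $W_1$ reductive makes the centralizer $W_{1,0}$ of the semisimple element $h$ reductive, so $V_1/\bigoplus_{\lambda \geq 1}B^\lambda \cong W_{1,0}$ is reductive, giving the opposite containment by minimality of $N_1$.  This pins down $N_1 = \bigoplus_{m \geq 2} W_{m,m-1}$.
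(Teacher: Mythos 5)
Your argument is correct, and it reaches the central identity $Ann_{V_1}(t(-1)) = rad\langle \ , \ \rangle$ by a genuinely different route. The paper first establishes the abstract trichotomy of Proposition \ref{relari} and then eliminates cases (ii) and (iii) by exhibiting the single element $h$, which lies in $P$ but in neither $Ann_{V_1}(t(-1))$ nor $rad\langle \ , \ \rangle$ (Lemmas \ref{lemmahPM} and \ref{lemmahnotann}); only afterwards does it place $\oplus_{m\geq 2}W_{m,m-1}$ inside the radical, and it invokes reductivity of $W_1$ to get the reverse containment. You instead extend the de Rham pairing of Theorem \ref{derham} from $V_0$ to $V_1$ --- the identity $(h(0)v,w)+(v,h(0)w)=\nu(v,w)$ and the resulting perfect pairings $B^{\lambda}\times B^{\nu-\lambda}$ --- and then compute \emph{both} $Ann_{V_1}(t(-1))$ and $rad\langle \ , \ \rangle$ exactly as $\oplus_{\lambda\geq 1}B^{\lambda}$, bypassing Proposition \ref{relari} entirely. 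What your route buys: the nondegeneracy of $\beta$ on $W_{1,0}$ needs only the $ad(h)$-eigenspace decomposition of $W_1$ and nondegeneracy of $\beta$ on $W_1$, not reductivity, so you in fact show that the containment in (\ref{inters}) is an \emph{equality} unconditionally --- a mild strengthening of the stated theorem (reductivity is then needed only for the identification with $N_1$, where your argument coincides with Lemma \ref{prop4.1}). The one step a reader should check carefully is the treatment of $Ann_{V_1}(t(-1))\cap B^0$ via $\phi(u)=u(-1)t$: it goes through because $L(-1)t$ is a nonzero element of $B^{\nu}$, $\phi|_{B^0}$ is injective by nondegeneracy of $\langle \ , \ \rangle$ on $B^0$ and surjective by the dimension count $\dim B^0=\dim B^{\nu}$, and $c_{u'}=2\nu\neq 1$ since $\nu$ is a positive integer when $\dim V_0\geq 2$. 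What the paper's route buys is brevity given the machinery of Section 4: one only needs the single test element $h$ rather than a full computation of the radical.
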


 \medskip
 Recall that 
$V_1$ is a Leibniz algebra, $N= L(-1)V_0$ is a $2$-sided ideal in $V_1$, 
$V_1/N$ is a Lie algebra, and $N_1/N$ is the \emph{nilpotent radical} of $V_1/N$
(\ref{Leibnilraddef}).\ Because $W$ is a VOA
of CFT-type then $W_1$ is a Lie algebra with bracket $a(0)b\ (a, b \in W_1)$, and 
 $W_{1, 0} =C_{W_1}(h)$ is the \emph{centralizer} of $h$ in $W_1$.\

\begin{lemma}\label{lemmahPM} $h\in P$.
\end{lemma}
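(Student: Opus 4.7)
By Lemma~\ref{lemmaP}, $h \in P$ iff $t(-1)h \in L(-1)T$; since $h(-1)t - t(-1)h = L(-1)h(0)t = \nu L(-1)t \in L(-1)T$ (from the identity established in the proof of that lemma, together with Theorem~\ref{derham}(iii)), this is equivalent to showing $h(-1)t \in L(-1)T = \mathbb{C}L(-1)t$. My plan is to establish the sharper identity $h(-1)t = \tfrac{1}{2}L(-1)t$ by comparing the two linear functionals $m \mapsto (L(-1)t, m)$ and $m \mapsto (h(-1)t, m)$ on $V_1$: if they differ by exactly a factor of $2$, then nondegeneracy of $(\ ,\ )$ on $V_1$ will force $L(-1)t - 2h(-1)t = 0$.

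For the first pairing, (\ref{invbilform30}) immediately gives $(L(-1)t, m) = (t, L(1)m)$. For the second, I apply (\ref{invbilform1}) with $u = h$. We have $L(0)h = h$, $L(1)h = L'(1)h - 2h(1)h = -\nu\mathbf{1}$ (using that $L'(1)W_1 = 0$ since $W$ is self-dual of CFT-type, together with Theorem~\ref{derham}(iii)), and $L(k)h \in V_{1-k} = 0$ for $k \geq 2$. Therefore $e^{zL(1)}(-z^{-2})^{L(0)}h = -z^{-2}h - z\nu\mathbf{1}$, and comparing the coefficient of $z^0$ on both sides of (\ref{invbilform1}) yields $(h(-1)t, m) = -(t, h(1)m)$. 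Combined with $L(1) = L'(1) - 2h(1)$ from (\ref{Lshift}), this gives
\begin{equation*}
(L(-1)t, m) - 2(h(-1)t, m) = (t, L'(1)m).
\end{equation*}

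To conclude, I show $(t, L'(1)m) = 0$ for every $m \in V_1$. Using the decomposition (\ref{h1decomp0}), it suffices to treat $m \in W_{k,k-1}$ for each $k \geq 1$. If $k = 1$, then $m \in W_1$, and $L'(1)m = 0$ since $L'(1)W_1 = 0$. If $k \geq 2$, the relation $[L'(1), h(0)] = 0$ shows that $L'(1)$ preserves $h(0)$-eigenvalues, so $L'(1)m \in W_{k-1,k-1} = A^{k-1}$; since $t \in A^\nu$ and $\nu + (k-1) \neq \nu$, Theorem~\ref{derham}(v) gives $(t, L'(1)m) = 0$. This proves $h(-1)t = \tfrac{1}{2}L(-1)t \in L(-1)T$, and therefore $h \in P$.

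The main obstacle is the derivation of $(h(-1)t, m) = -(t, h(1)m)$ from (\ref{invbilform1}): one must carefully track that the correction term $-z\nu\mathbf{1}$ (which reflects the failure of $h$ to be primary in the shifted VOA $V$) contributes only at mode $n = -2$, not at the mode $n = -1$ relevant here. Once this is verified, the remainder of the argument is a direct combination of the shift formula (\ref{Lshift}), the de Rham structure of Theorem~\ref{derham}, and the self-duality of $W$.
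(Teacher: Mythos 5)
Your proof is correct, but it takes a different route from the paper's. The paper verifies the defining condition $\langle h,M\rangle=(h(1)M,t)=0$ directly: it decomposes the ideal $M$ into $h(0)$-eigenspaces $M^p$, kills the $p\neq 0$ pieces with Theorem \ref{derham}(v), and handles $M^0\subseteq C_{W_1}(h)$ by the substitution $h(1)=\tfrac12(L'(1)-L(1))$ together with $L'(1)W_1=0$ and $M=(L(-1)T)^{\perp}$. You instead invoke the characterization $P=\{u\mid t(-1)u\in L(-1)T\}$ from Lemma \ref{lemmaP} and prove the sharper explicit identity $h(-1)t=\tfrac12 L(-1)t$ by pairing against all of $V_1$ and using nondegeneracy of $(\ ,\ )$ on $V_1$. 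The ingredients are ultimately the same ($L'(1)W_1=0$, the $h(0)$-eigenvalue bookkeeping, and Theorem \ref{derham}(v)), but your version buys a concrete formula for $h(-1)t$ rather than mere membership in $P$, at the price of the adjoint computation for the non-quasiprimary vector $h$. One small slip there: with the convention of (\ref{invbilform1}) the operator $(-z^{-2})^{L(0)}$ acts first, so $e^{zL(1)}(-z^{-2})^{L(0)}h=-z^{-2}\bigl(h+zL(1)h\bigr)=-z^{-2}h+z^{-1}\nu\mathbf{1}$, not $-z^{-2}h-z\nu\mathbf{1}$; the correction term therefore contributes at mode $n=0$ rather than $n=-2$. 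Either way it misses the $n=-1$ coefficient, so your identity $(h(-1)t,m)=-(t,h(1)m)$ and everything downstream of it are unaffected.
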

\noindent 
{\bf Proof}.\ We have to show that $\langle h, M\rangle = (h(1)M, t)=0$.\ Since $M$ is an ideal in $V_1$ then $M$ is the direct sum of its $h(0)$-eigenspaces.\
Let $M^p = \{m\in M\ | \ h(0)m=pm\}$.\
Now $h(0)h(1)m=h(1)h(0)m=ph(1)m\ (m\in M^p)$, showing that
$h(1)M^p\subseteq A^p$.\ If $p\not= 0$ then $(A^p, t)=0$ by Theorem \ref{derham},
so that $(h(1)M^p, t)=0$ in this case.

\medskip
It remains to establish that $(h(1)M^0, t)=0$.\ To see this, first note that because $W$ is self-dual then
$L'(1)W_1=0$.\ Since $L'(1)=L(1)+2h(1)$ then $h(1)W_1 = L(1)W_1$, and in particular
$h(1)M^0=L(1)M^0$ (because $M^0\subseteq V_1^0=W_{1, 0}\subseteq W_1$).\ Therefore,
$(h(1)M^0, t)=(L(1)M^0, t)=(M^0, L(-1)t)=0$,
where the last equality holds by Lemma \ref{lemmaM}.\ 
The Lemma is proved. $\hfill \Box$

\begin{lemma}\label{lemmahnotann} $h\notin Ann_{V_1}(t(-1))\cup rad\langle \ , \ \rangle$.
\end{lemma}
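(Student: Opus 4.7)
The plan is to exhibit a single concrete calculation, namely $\langle h, h \rangle$, whose value (via Lemma \ref{lemmarad}) is both incompatible with $h(-1)t = 0$ and incompatible with $t(-1)h = 0$. The crucial numerical input is that $\nu \geq 1$, and once this is in hand the rest is mechanical.

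First I would pin down $\nu \geq 1$. Because $\dim V_0 \geq 2$ and $A^0 = \mathbb{C}\mathbf{1}$ by Theorem \ref{derham}(ii), there must exist some $\lambda > 0$ with $A^\lambda \neq 0$, so the maximal eigenvalue $\nu$ of $h(0)$ on $V_0$ is a positive integer (Theorem \ref{derham}(i), (iii)).

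Next, $h \notin rad\langle \ , \ \rangle$ drops out immediately: by definition of $\langle \ , \ \rangle$ and the fact that $h(1)h = (\nu/2)\mathbf{1}$,
\begin{equation*}
\langle h, h \rangle = (h(1)h, t) = \tfrac{\nu}{2}(\mathbf{1}, t),
\end{equation*}
which is nonzero by (\ref{1tperp}) and $\nu \geq 1$.

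For the harder half, $h \notin Ann_{V_1}(t(-1))$, I would argue by contradiction. Assuming $t(-1)h = 0$, the skew-symmetry computation already carried out in the proofs of Lemmas \ref{lemmaP} and \ref{lemmarad} gives
\begin{equation*}
t(-1)h = h(-1)t - L(-1)t(0)h = h(-1)t - \nu L(-1)t,
\end{equation*}
since $t(0)h = -h(0)t = -\nu t$ (using skew-symmetry in $V_0 \oplus V_1$ together with $h(k)t = 0$ for $k \geq 1$ because $V$ is $\mathbb{N}$-graded). Hence $h(-1)t = \nu L(-1)t$. Now I would evaluate $\langle h, h \rangle = -(h, h(-1)t)$ by Lemma \ref{lemmarad}, pull $L(-1)$ across via (\ref{invbilform30}), and exploit the shift identity $L(1) = L'(1) - 2h(1)$ combined with $L'(1)h = 0$ (since $W$ is self-dual of CFT-type) to get $L(1)h = -\nu\mathbf{1}$. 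This yields $\langle h, h\rangle = \nu^2 (\mathbf{1}, t)$, which when matched against $\tfrac{\nu}{2}(\mathbf{1},t)$ from the previous paragraph forces $\nu = 1/2$, contradicting integrality.

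The only real obstacle is the bookkeeping in step three: one needs to keep the two independent expressions for $\langle h, h\rangle$ straight and must correctly identify $L(1)h$ inside the shifted Virasoro, so that everything comes down to a comparison of two scalar multiples of $(\mathbf{1},t)$. Once this is in place the contradiction with $\nu \in \mathbb{Z}_{\geq 1}$ is immediate.
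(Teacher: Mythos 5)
Your proof is correct and follows essentially the same route as the paper: the same inputs ($h(1)h=(\nu/2)\mathbf{1}$, $L(1)h=-\nu\mathbf{1}$ via $L'(1)h=0$, $h(0)t=\nu t$, skew-symmetry, and $\nu\in\mathbb{Z}_{>0}$) appear, only packaged as a contradiction rather than the paper's direct evaluation $(t(-1)h,h)=(\nu^2-\nu/2)(t,\mathbf{1})\neq 0$. One cosmetic slip: the skew-symmetry expansion should read $t(-1)h=h(-1)t-L(-1)h(0)t$ (not $-L(-1)t(0)h$), but your displayed conclusion $h(-1)t=\nu L(-1)t$ is nevertheless the correct one.
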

\noindent
{\bf Proof}. First recall that $h(1)h=\nu/2\mathbf{1}$.\ Then we have
\begin{eqnarray*}
\langle h, h\rangle &=& (h(1)h, t) = \nu/2(\mathbf{1}, t)\not= 0.
\end{eqnarray*}
Here, 
$\nu\not= 0$ thanks to Theorem \ref{derham} and because we are assuming that
$\dim V_0\geq 2$.\ Because $h\in V_1$, this shows that $\langle h, V_1\rangle \not= 0$, so that $h\notin rad\langle \ , \ \rangle$.

\medskip
Next, using (\ref{Lshift}) we have $L(1)h=L'(1)h-2h(1)h$.\ Because $W$ is assumed to be self-dual then
$L'(1)h=0$, so that $L(1)h=-2h(1)h =-\nu\mathbf{1}$.\ Now
\begin{eqnarray*}
(t(-1)h, h)= (h(-1)t-L(-1)h(0)t, h).
\end{eqnarray*}
Therefore,
\begin{eqnarray*}
(L(-1)h(0)t, h) &=& (h(0)t, L(1)h)= -\nu^2(t, \mathbf{1}).
\end{eqnarray*}
Also,
\begin{eqnarray*}
(h(-1)t, h) &=& (t, -h(1)h-(L(1)h)(0)h) = -\nu/2(t, \mathbf{1}).
\end{eqnarray*}
Therefore,
\begin{eqnarray*}
(t(-1)h, h) &=&(\nu^2-\nu/2)(t, \mathbf{1}).
\end{eqnarray*}

\medskip
Because $\nu$ is a positive integer, the last displayed expression is nonzero.\
Therefore, $t(-1)h\not= 0$, i.e., $h\notin Ann_{V_1}(t(-1))$.\ This completes the proof of the
Lemma. $\hfill \Box$

\medskip
\noindent
We turn to the proof of Theorem \ref{thmshift}.\ First note that by combining Lemmas
\ref{lemmahPM} and \ref{lemmahnotann} together with Proposition \ref{relari}, we see
that cases (ii) and (iii) of Proposition \ref{relari} \emph{cannot} hold.\
Therefore, case (i) must hold, that is
\begin{eqnarray*}
rad\langle \ , \ \rangle = Ann_{V_1}(t(-1)).
\end{eqnarray*}

From (\ref{h0decomp0}) it is clear that, up to scalars, $\mathbf{1}$ is the only
state in $V_0$ annihilated by $h(0)$.\ It then follows from Lemma \ref{lemmaM} that
$J = \oplus_{m\geq 1} W_{m, m}$.\ In particular, $(W_{m, m}, t)=0\ (m\geq 1)$ by
(\ref{Tchar}).

\medskip
Now let $u\in W_{m, m-1}, v\in W_{k, k-1}$ with $m\geq 1, k\geq 2$.\ Then
$u(1)v \in V_0$ and $L'(0)u(1)v = (m+k-2)u(1)v$.\ Therefore, $u(1)v \in W_{m+k-2, m+k-2}$,
and because $m+k-2\geq 1$ it follows that
\begin{eqnarray*}
\langle u, v\rangle = (u(1)v, t) = 0.
\end{eqnarray*}
Because this holds for all $u\in W_{m, m-1}$ and all $m\geq 1$, we conclude that
$v\in$ rad$\langle \ , \rangle$.\ This proves that $\oplus_{m\geq 2}W_{m, m-1} \subseteq$ rad$\langle \ , \ \rangle$.\ Now (\ref{inters}) follows immediately.

\medskip
Now suppose that $W_1$ is reductive.\ 
Because $W$ is self-dual and of CFT-type, it has (up to scalars) a unique nonzero invariant bilinear form.\ Let us denote it by $(( \ , \ ))$. In particular, we have 
\begin{eqnarray*}
((u, v))\mathbf{1} = u(1)v\ \ (u, v \in W_1).
\end{eqnarray*}

\medskip
Because $V$ is simple and $V, W$ have the same set of fields,
then $W$ is also simple.\ In particular, $(( \ , \ ))$ must be nondegenerate.
Now if $L$ is a (finite-dimensional, complex) reductive Lie algebra equipped
with a nondegenerate, symmetric invariant bilinear form, then the restriction of the form
to the centralizer of any semisimple element in $L$ is also nondegenerate.\ In the present situation, this tells us that
the restriction of $(( \ , \ ))$ to $C_{W_1}(h)$ is nondegenerate.

\medskip
On the other hand, we have 
\begin{eqnarray*}
\langle u, v \rangle = (u(1)v, t) = ((u, v))(\mathbf{1}, t)
\end{eqnarray*}
and $(\mathbf{1}, t)\not= 0$ by (\ref{1tperp}).\ This shows that the restrictions of $\langle \ , \ \rangle$ and 
$(( \ , \ ))$ to $C_{W_1}(h)$ are \emph{equivalent} bilinear forms.\ Since the latter  is nondegenerate,
so is the former.\ Therefore, rad$\langle \ , \ \rangle \cap C_{W_1}(h)=0$.\
Now the second and third equalities of (\ref{eqs}) follows from (\ref{inters}) and the decomposition
$V_1 = C_{W_1}(h)\oplus \oplus_{m\geq 2} W_{m, m-1}$.

\medskip
To complete the proof of the Theorem it suffices to prove the next result.
\begin{lemma}\label{prop4.1} We have
\begin{eqnarray}\label{N0decomp1}
N_1 = Nilp(C_{W_1}(h)) \oplus_{m\geq 2}W_{m, m-1}.
\end{eqnarray}
In particular, if $W_1$ is a \emph{reductive} Lie algebra then
\begin{eqnarray}\label{N0decomp2}
N_1 = \oplus_{m\geq 2} W_{m, m-1}.
\end{eqnarray}
\end{lemma}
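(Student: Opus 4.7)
The plan is to establish, inside $V_1$, the direct-sum decomposition $N_1 = Nilp(C_{W_1}(h)) \oplus I$ where $I := \bigoplus_{m \geq 2} W_{m, m-1}$, by realising $N_1$ as the preimage of the nilpotent radical of $C_{W_1}(h)$ under the natural projection $V_1 \to V_1/I \cong C_{W_1}(h)$.

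First I would collect the commutator information. Using $[L'(0), u(q)] = (a-q-1)u(q)$ and $[h(0), u(q)] = (a-1)u(q)$ for $u \in W_{a, a-1}$, a direct computation gives, for $u \in W_{a,a-1}$ and $v \in W_{b,b-1}$,
\begin{equation*}
u(0)v \in W_{a+b-1,\, a+b-2}.
\end{equation*}
Hence $I$ is a two-sided Leibniz ideal of $V_1$, and the splitting $V_1 = W_{1,0} \oplus I$ identifies $V_1/I$ with $W_{1,0} = C_{W_1}(h)$, a genuine Lie algebra (being a subalgebra of $W_1$). Since $L(-1) = L'(-1)$ carries $W_{m,m}$ into $W_{m+1,m}$ for $m \geq 1$ and kills the vacuum, the subspace $N = L(-1)V_0$ is contained in $I$, so $N_1$ fits into the preimage framework just described.

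Next I would extract two decisive structural properties of $I$. Nilpotence: $k$-fold brackets in $I$ lie in $\bigoplus_{m \geq k+1} W_{m, m-1}$ by the weight formula, and finite-dimensionality of $V_1$ forces this to vanish for $k$ large. Containment in the derived subalgebra: since $h \in W_{1,0} \subseteq V_1$ and $h(0)v = (m-1)v$ for $v \in W_{m, m-1}$, the identity $v = (m-1)^{-1}[h,v]$ valid for $m \geq 2$ shows that $I \subseteq [V_1, V_1]$. With these facts, I characterise $N_1$ as the smallest ideal $K \supseteq N$ of $V_1$ for which $V_1/K$ is reductive. For any such $K$, the image of $I$ in $V_1/K$ is simultaneously a nilpotent ideal and a subspace of the derived subalgebra $[V_1/K, V_1/K]$; in a reductive Lie algebra nilpotent ideals lie in the centre, whose intersection with the derived subalgebra is trivial, so the image vanishes and $I \subseteq K$. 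Passing to $V_1/I \cong C_{W_1}(h)$, the ideal $K/I$ has reductive quotient, so $K/I \supseteq Nilp(C_{W_1}(h))$ by the defining property of $Nilp$. Conversely $V_1/(I + Nilp(C_{W_1}(h))) \cong C_{W_1}(h)/Nilp(C_{W_1}(h))$ is reductive, and $I + Nilp(C_{W_1}(h))$ is visibly an ideal of $V_1$ containing $N$. By minimality $N_1 = I + Nilp(C_{W_1}(h))$; the sum is direct because $I \cap W_{1,0} = 0$, which is (\ref{N0decomp1}). If $W_1$ is reductive then $C_{W_1}(h)$ is reductive (centraliser of a semisimple element in a reductive Lie algebra), so $Nilp(C_{W_1}(h)) = 0$ and (\ref{N0decomp2}) follows.

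The main step to verify carefully is the observation $I \subseteq [V_1, V_1]$ coming from the action of $h$: once this is in place, the structure theory of reductive Lie algebras forces any ideal with reductive quotient to swallow $I$, and the remainder of the argument is routine bookkeeping with the splitting $V_1 = C_{W_1}(h) \oplus I$.
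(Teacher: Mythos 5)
Your proof is correct and follows essentially the same route as the paper: both arguments show that $I=\oplus_{m\geq 2}W_{m,m-1}$ is a nilpotent two-sided ideal via the weight computation $W_{a,a-1}(0)W_{b,b-1}\subseteq W_{a+b-1,a+b-2}$, use $W_{m,m-1}=[h,W_{m,m-1}]$ for $m\geq 2$ to place $I$ inside $N_1$, and then reduce to $C_{W_1}(h)\cong V_1/I$, invoking reductivity of the centralizer of a semisimple element for the final claim. You merely spell out the minimality bookkeeping (the paper compresses this into ``(\ref{N0decomp1}) follows immediately''), which is fine.
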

\noindent
{\bf Proof.}\
Let $u \in W_{k, k-1}, v \in W_{m, m-1}$ with $k\geq 1, m \geq 2$.\ Then
$v(0)u \in W_{m+k-1}\cap V_1\subseteq W_{m+k-1, m+k-2}$ and $m+k-1>k$.\
This shows that $\oplus_{m\geq 2}W_{m, m-1}$ is an ideal in $V_1$.\ Moreover, because there is a maximum integer $r$ for which $W_{r, r-1}\not= 0$, it follows that $v(0)^l V_1=0$ for large enough $l$, so that
the left adjoint action of $v(0)$ on $V_1$ is \emph{nilpotent}.\ This shows that
$\oplus_{m\geq 2}W_{m, m-1}$ is a nilpotent ideal.\ Because $h(0)$ acts
on $W_{m, m-1}$ as multiplication by $m-1$, then $W_{m, m-1}=[h, W_{m, m-1}]$ for $m\geq 2$, whence
in fact
$\oplus_{m\geq 2}W_{m, m-1} \subseteq N_1$.\ Then (\ref{N0decomp1}) follows immediately.

\medskip
Finally, if $W_1$ is reductive, the centralizer of any semisimple element in $W_1$
is also reductive.\ In particular, this applies to $C_{W_1}(h)$ since $h(0)$ is indeed semisimple, so (\ref{N0decomp2}) follows from (\ref{N0decomp1}).\ This completes the proof of
the Lemma, and hence also that of Theorem \ref{thmshift}. $\hfill \Box$

\bigskip
The VOA $W$ is \emph{strongly regular} if it is self-dual and CFT-type as well
as both \emph{$C_2$-cofinite} and \emph{rational}.\ For a general discussion of such VOAs
see, for example, \cite{M}.\ It is known (\cite{M} and \cite{DM1}, Theorem 1.1) that
in this case, $W_1$ is necessarily reductive.\ Consequently, we deduce from
Theorem \ref{thmshift} that the following holds.
\begin{corollary}\label{cor4.3} Suppose that $W$ is a strongly regular VOA, and that $V$ is a self-dual,
$\mathbb{N}$-graded
VOA obtained as a shift of $W$.\ Then $rad\langle \ , \ \rangle = Nilp(V_1)$. $\hfill \Box$
\end{corollary}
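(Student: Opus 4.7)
The plan is to compose the two ingredients already assembled in the paper. First, I would invoke the fact that $W_1$ is a reductive Lie algebra whenever $W$ is strongly regular, which is \cite{DM1}, Theorem 1.1 (and is surveyed in \cite{M}). With this in hand, the hypotheses of Theorem \ref{thmshift} are met for the shifted triple $(W, h, V)$: $V$ is a self-dual, $\mathbb{N}$-graded shift of a self-dual CFT-type VOA $W$, and now $W_1$ is reductive.

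Applying the reductive case of Theorem \ref{thmshift}, I would conclude
\begin{eqnarray*}
rad\langle \ , \ \rangle \ =\ N_1.
\end{eqnarray*}
To finish, I would identify $N_1$ with the nilpotent radical $Nilp(V_1)$ of the Leibniz algebra $V_1$. This identification is essentially by definition: in (\ref{Leibnilraddef}) $N_1$ is defined via $N_1/N = Nilp(V_1/N)$, and the remarks immediately following (\ref{Leibnilraddef}) observe that $N_1$ is the smallest ideal in $V_1$ whose quotient is a reductive Lie algebra. This is precisely the characterization of $Nilp(V_1)$ used throughout the paper (compare the introduction and Remark \ref{remQ}), so $N_1 = Nilp(V_1)$, yielding the claimed equality.

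There is no substantive obstacle here; the work has already been done in Theorem \ref{thmshift}, and the corollary is simply a repackaging of that result under the more standard hypothesis of strong regularity. The only small points worth confirming are (a) that the running assumption $\dim V_0 \geq 2$ of Section 6 is in force, with the case $\dim V_0 = 1$ reducing to $V = W$ of CFT-type where the conclusion is covered by the original computation in \cite{DM3}; and (b) that strong regularity of $W$ indeed supplies the self-duality and CFT-type hypothesis feeding into Theorem \ref{thmshift}, which is immediate from the definition of strongly regular given in the paragraph preceding the corollary. In this way the corollary generalizes the nilpotent radical computation of \cite{DM3} from shifts of lattice theories to shifts of arbitrary strongly regular VOAs.
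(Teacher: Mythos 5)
Your proposal is correct and matches the paper's own argument exactly: cite \cite{DM1}, Theorem 1.1 for the reductivity of $W_1$, apply the reductive case of Theorem \ref{thmshift} to get $rad\langle \ , \ \rangle = N_1$, and identify $N_1$ with $Nilp(V_1)$ via (\ref{Leibnilraddef}). The extra hypothesis checks you flag are harmless but not needed beyond what the paper already assumes.
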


\medskip
\noindent
\begin{remark} The Corollary applies, for example, to the shifted theories $V=L_{\hat{sl_2}}(k, 0)^H$
discussed in Section \ref{SSsl2} below.\ In this case, one can directly compute the relevant quantities.
\end{remark}

\section{The $C_2$-cofinite case}
In this Subsection we are mainly concerned with simple VOAs $V$ that are
self-dual and $\mathbb{N}$-graded as before, but that are also \emph{rational}, or
\emph{$C_2$-cofinite}, or both. 

\medskip
To motivate the main results of the present Subsection, we recall some results about vertex operator algebras $V$ with $V_0=\mathbb{C}\mathbf{1}$.\ In this case, $V_1$ is a Lie algebra, and if
$V$ is strongly regular then
$V_1$ is \emph{reductive} (\cite{DM1}, Theorem 1.1).\ It is also known (\cite{DM4},
Theorem 3.1) that if $V$ is $C_2$-cofinite, but not necessarily rational, and
$S\subseteq V_1$ is a Levi factor, then the vertex operator subalgebra $U$ of $V$
generated by $S$ satisfies
\begin{eqnarray}\label{ssLAgen}
U\cong L_{\hat{\frak{g}}_1}(k_1,0)\oplus...\oplus L_{\hat{\frak{g}}_r}(k_r,0),
\end{eqnarray}
i.e., a direct sum of simple affine Kac-Moody Lie algebras $L_{\hat{\frak{g}}_j}(k_j,0)$ of positive integral level $k_j$. 

\medskip
We want to know to what extent these results generalize to the more general case when $dimV_0>1$.\
With $N=L(-1)V_0$ as before, we have seen  that $V_1/N$ is a Lie algebra.\ Now $V_0=\mathbb{C}\mathbf{1}$ precisely when $N=0$, but the natural guess
that $V_1/N$ is reductive if $V$ is rational and $C_2$-cofinite is generally false.\ Thus we need 
to understand the nilpotent radical $N_1/N$ of this Lie algebra. That is where the  bilinear form
$\langle \ , \ \rangle$ comes in.\ These questions
are naturally related to the issue, already addressed in Section \ref{SS2}, of the structure of the subalgebra of $V$ generated by a Levi subalgebra of $V_1$.\ The main result is

 \begin{theorem}\label{thmC2} Let $V$ be a simple, self-dual, $\mathbb{N}$-graded vertex operator algebra that is 
 $C_2$-cofinite, and let $V_1=B\oplus S$ with Levi factor
 $S$ and solvable radical $B$. Then the following hold.
\begin{enumerate}
\item $N_1\subseteq rad\langle \ , \ \rangle\subseteq B$, and the restriction of $\langle\ , \ \rangle$ to $S$ is non-degenerate.\ In particular, $a(1)b\in\mathbb{C}{\bf 1}$ for all $a,b\in S$.
\item If $U$ is the vertex operator algebra generated by $S$ then 
$U$ satisfies (\ref{ssLAgen}).
\end{enumerate}
\end{theorem}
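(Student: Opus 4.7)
I will leverage Theorem \ref{thmLevi}. The key preliminary is to show that every Levi factor $S$ of $V_1$ already lies in the ideal $F=\mathrm{Ann}_{V_1}(K)$, where $K\subseteq V_1$ is the Leibniz kernel (denoted $N$ in Section \ref{SS2}). The argument uses that $K\subseteq L(-1)V_0=N$ via the skew-symmetry identity $2u(0)u=L(-1)u(1)u$, combined with the fact that $L(-1)V_0$ is abelian and has trivial left $0$-action on $V_1$; Weyl complete reducibility of the finite-dimensional $S$-module $K$ then forces $S(0)K=0$. Granting $S\subseteq F$, Theorem \ref{thmLevi}(b) yields $u(1)v=\alpha(u,v)\mathbf{1}$ for $u,v\in S$, where $\alpha$ is a symmetric invariant form on $S$, and hence
\begin{equation*}
\langle u,v\rangle\big|_{S\times S}=\alpha(u,v)(\mathbf{1},t),\qquad (\mathbf{1},t)\neq 0.
\end{equation*}

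\textbf{Part (2) first.} Decomposing $S=\mathfrak{g}_1\oplus\cdots\oplus\mathfrak{g}_r$ into simple ideals with corresponding levels $k_j$, the sub-VOA $U$ generated by $S$ is a quotient of $\bigotimes_j V_{\hat{\mathfrak{g}}_j}(k_j,0)$. Adapting the argument of \cite{DM4}, Theorem 3.1 from the CFT-type setting to the present one via the Zhu Poisson algebra $V/C_2(V)$, $C_2$-cofiniteness forces each $k_j$ to be a positive integer and $U\cong\bigotimes_j L_{\hat{\mathfrak{g}}_j}(k_j,0)$; this establishes (\ref{ssLAgen}) and proves part (2). In particular $\alpha$ is nondegenerate on each simple summand of $S$, so by the display above $\langle\ ,\ \rangle|_{S}$ is nondegenerate, completing the final assertions of part (1).

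\textbf{Trapping $\mathrm{rad}\langle\ ,\ \rangle$.} For the containment $\mathrm{rad}\langle\ ,\ \rangle\subseteq B$, I work inside the Lie algebra $V_1/N$ where Malcev conjugacy of Levi factors is available. Because the preliminary claim $S\subseteq F$ holds for \emph{every} Levi factor, the nondegeneracy argument above applies uniformly, giving $\mathrm{rad}\langle\ ,\ \rangle\cap S'=0$ for every Levi factor $S'$ of $V_1$. If $\mathrm{rad}\langle\ ,\ \rangle/N$ had a nontrivial semisimple Levi component $L$, then by Malcev $L$ would lie in some Levi $S'$ of $V_1/N$, contradicting $L\subseteq S'\cap\mathrm{rad}\langle\ ,\ \rangle/N=0$. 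Hence $\mathrm{rad}\langle\ ,\ \rangle/N$ is solvable and $\mathrm{rad}\langle\ ,\ \rangle\subseteq B$. For $N_1\subseteq \mathrm{rad}\langle\ ,\ \rangle$, the plan is to show $V_1/\mathrm{rad}\langle\ ,\ \rangle$ is \emph{reductive}: from the Levi decomposition $S\oplus B/(B\cap\mathrm{rad}\langle\ ,\ \rangle)$ and the induced nondegenerate invariant form, the $S$-action on the solvable part should decompose into trivial summands, forcing the solvable part to be abelian and central. Reductivity makes the nilpotent radical vanish, and since $N_1$ projects into it, $N_1\subseteq \mathrm{rad}\langle\ ,\ \rangle$.

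\textbf{Main obstacle.} The principal technical hurdle is the transfer of Dong--Mason's integer-level result \cite{DM4} from VOAs of CFT-type to the present $\mathbb{N}$-graded setting: $C_2$-cofiniteness of $V$ does not automatically give $C_2$-cofiniteness of $U$, so the argument must be reworked inside the Zhu Poisson algebra (or via finite multiplicities of $U$-modules in $V$). A secondary but nontrivial issue is the reductivity of $V_1/\mathrm{rad}\langle\ ,\ \rangle$, since a nondegenerate invariant symmetric bilinear form alone does not force reductivity of an abstract Lie algebra (witness double extensions, Medina--Revoy type constructions); one must exploit both the Frobenius structure of $V_0$ and the specific definition $\langle u,v\rangle=(u(1)v,t)$ in an essential way, rather than relying on the abstract invariance alone.
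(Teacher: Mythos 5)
Your overall architecture for part (2) and for the nondegeneracy of $\langle\ ,\ \rangle$ on $S$ --- adapt the Gaberdiel--Neitzke spanning set to the $\mathbb{N}$-graded setting and run the Dong--Mason integrability argument to force positive integral levels --- is exactly the paper's route (Proposition \ref{propgen}, Theorem \ref{wlko}, Lemma \ref{snonde}), and your identification of that transfer as the main technical hurdle is on target. But your point of entry has a genuine gap. You base everything on the claim that \emph{every} Levi factor $S$ of $V_1$ lies in $F=\mathrm{Ann}_{V_1}(K)$, deduced from Weyl's theorem applied to the $S$-module $K$. The facts you cite ($K\subseteq N=L(-1)V_0$ and $N(0)V_1=0$) show only that $K$ is an $S$-submodule of $V_1$, hence completely reducible; they do not make the action trivial --- a nontrivial irreducible summand is perfectly consistent with complete reducibility. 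In the paper's Lemma \ref{lemmaWtriv} the triviality of $S(0)$ on the Leibniz kernel is precisely the \emph{hypothesis} $S\subseteq F$, so your sketch assumes what it sets out to prove; and the authors explicitly say they do not know whether Theorem \ref{thmLevi} extends to Levi factors of $V_1$. Fortunately the detour through $F$ is unnecessary: for $u,v$ in any Lie subalgebra $W\subseteq V_1$ of an $\mathbb{N}$-graded simple $V$, skew-symmetry gives $u(0)v+v(0)u=L(-1)\bigl(v(1)u\bigr)$ (the higher terms land in $V_{-1}=0$), so $u(0)v=-v(0)u$ forces $u(1)v\in\ker L(-1)=\mathbb{C}\mathbf{1}$, and $u(i)v=0$ for $i\geq 2$ for degree reasons. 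This one-line observation is what the paper uses; with it, your reduction of part (2) to the DM4 adaptation, and your Malcev argument in the Lie algebra $V_1/N$ giving $\mathrm{rad}\langle\ ,\ \rangle\subseteq B$, go through.

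The containment $N_1\subseteq\mathrm{rad}\langle\ ,\ \rangle$ (equivalently, reductivity of $V_1/\mathrm{rad}\langle\ ,\ \rangle$) is not established by your proposal: as you yourself observe, a nondegenerate invariant symmetric form does not force reductivity (double extensions), and the ``plan'' that the $S$-action on the solvable part should decompose into trivial summands, forcing it to be abelian and central, is not an argument --- a trivial $S$-action on $B/\mathrm{rad}\langle\ ,\ \rangle$ would not by itself make that quotient abelian, and the needed orthogonality $\langle B,[V_1,V_1]\rangle=0$ is circular under invariance alone. To be fair, the paper is also laconic at exactly this point (it asserts that Theorem \ref{thmC2} follows from Proposition \ref{propgen}, Theorem \ref{wlko} and Lemma \ref{snonde} without spelling this step out). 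As submitted, then, your proposal covers part (2), the nondegeneracy on $S$, and $\mathrm{rad}\langle\ ,\ \rangle\subseteq B$, but rests on the unproved (and avoidable) claim $S\subseteq F$ and leaves $N_1\subseteq\mathrm{rad}\langle\ ,\ \rangle$ open.
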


We start with
\begin{proposition}\label{propgen} Let $V=\oplus_{n=0}^{\infty}V_n$ be an $\mathbb{N}$-graded  vertex operator algebra such that $dim~V_0>1$. Let $X=\{x^i\}_{i\in I}\cup \{y^j\}_{j\in J}$ be a set of homogeneous elements in $V$ which are representatives of a basis of $V/C_2(V)$. Here, $x^i$ are vectors whose weights are greater than or equal to 1 and $y^j$ are vectors whose weights are zero. Then $V$ is spanned by elements of the form 
$$x^{i_1}(-n_1)...x^{i_s}(-n_s)y^{j_1}(-m_1)...y^{j_k}(-m_k){\bf 1}$$
where $n_1>n_2>....>n_s>0$ and $m_1\geq m_2\geq...\geq m_k>0$.
\end{proposition}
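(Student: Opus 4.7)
The plan is to adapt the Gaberdiel--Neitzke / Buhl spanning arguments (developed for CFT-type VOAs) to the $\mathbb{N}$-graded case with $\dim V_0 > 1$, exploiting the crucial fact that the modes of weight-zero vectors \emph{commute}. The argument is an induction on conformal weight, with a lexicographic refinement on the mode sequence.

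\medskip
\noindent First I would establish the rough spanning: $V$ is spanned by monomials $u^1(-k_1)\cdots u^r(-k_r)\mathbf{1}$ with $u^i \in X$ and $k_i \geq 1$ (no ordering yet). Given $v\in V_n$, the hypothesis that $X$ represents a basis of $V/C_2(V)$ yields a decomposition $v = \sum \alpha_i\, w^i(-1)\mathbf{1} + \sum a_j(-2) b_j$ with $w^i \in X\cap V_n$ and $\operatorname{wt}(a_j) + \operatorname{wt}(b_j) = n-1$, so $a_j,b_j$ have weight strictly less than $n$. Expanding $b_j$ inductively and prepending the factor $a_j(-2)$ produces the rough spanning; the identity $(L(-1)a)(k) = -k\, a(k-1)$ combined with (\ref{commform}) lets one trade one mode for another and keep all mode indices $\leq -1$.

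\medskip
\noindent Next I would upgrade to the asserted form in three successive reductions. \emph{(i) $y$-modes commute.} For $y^j, y^{j'}\in V_0$, the commutator formula (\ref{commform}) gives
\[
[y^j(m),\ y^{j'}(n)] \;=\; \sum_{i\geq 0}\binom{m}{i}\bigl(y^j(i)y^{j'}\bigr)(m+n-i),
\]
and each $y^j(i)y^{j'} \in V_{-i-1}$ vanishes for $i\geq 0$ since $V$ is $\mathbb{N}$-graded. Hence strings of $y$-modes mutually commute and may be freely rearranged into non-strict decreasing order $m_1 \geq \cdots \geq m_k > 0$. \emph{(ii) Moving $x$-modes past $y$-modes.} Because $x^i(\ell)y^j \in V_{\operatorname{wt}(x^i)-\ell-1}$ has weight strictly less than $\operatorname{wt}(x^i)$ for every $\ell\geq 0$, the commutator $[x^i(-n),\,y^j(-m)]$ is a sum of modes of strictly lower-weight vectors, which by induction on weight can be re-expressed within the claimed spanning set. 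This lets one push every $x$-mode to the left of every $y$-mode. \emph{(iii) Strict ordering of $x$-modes.} A Buhl-style argument, using (\ref{commform}) to rewrite any equal pair $x^i(-n)x^j(-n)$ as modes of vectors $x^i(\ell)x^j$ for $\ell\geq 0$ (which again have strictly smaller weight than $x^i$ or $x^j$, since $\operatorname{wt}(x^i),\operatorname{wt}(x^j)\geq 1$), produces by induction the strict decrease $n_1 > n_2 > \cdots > n_s > 0$.

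\medskip
\noindent The main obstacle is the careful bookkeeping of the induction: one needs a joint measure — for instance, total conformal weight first, then the length of the word, then a lexicographic measure on the tuple of mode indices — so that every commutator correction strictly decreases the induction parameter, and so that the outputs of the reduction steps respect the separation between $x$- and $y$-blocks. The conceptual reason the statement takes exactly the claimed asymmetric form is the dichotomy between weight-zero vectors (whose modes commute outright, giving only non-strict reordering but in exchange no correction terms) and vectors of weight $\geq 1$ (whose commutators drop the weight, enabling the Buhl sharpening to strict ordering).
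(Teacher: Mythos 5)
Your proposal is correct and takes essentially the same approach as the paper: the paper's entire proof is the single sentence that the result ``follows by modifying the proof of Proposition 8 in \cite{GN}'', and your outline is precisely that modification --- the Gaberdiel--Neitzke induction for the weight-$\geq 1$ generators together with the key observation that modes of weight-zero vectors commute in an $\mathbb{N}$-graded VOA (since $y^j(i)y^{j'}\in V_{-i-1}=0$ for $i\geq 0$), which is exactly why the $y$-block admits only non-strict ordering while the $x$-block can be strictly ordered. One small caveat: in your step (iii) the elimination of a repeated mode index $x^i(-n)x^j(-n)$ is achieved in \cite{GN} via the iterate (associativity) formula applied to $(x^i(-1)x^j)(-2n+1)$, not the commutator formula alone (which only reorders), but this is part of the GN bookkeeping you already defer to.
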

\noindent
{\bf Proof}.\  The result follows by modifying the proof of Proposition 8 in \cite{GN}.
$\hfill \Box$

\medskip
Notice that for a Lie algebra $W\subset V_1$, we have $u(0)v=-v(0)u$ for $u,v\in W$. Hence, $L(-1)u(1)v=0$ and $u(1)v\in \mathbb{C}{\bf 1}$. Moreover, we have $\langle \ , \ \rangle{\bf 1}=(u(1)v,t){\bf 1}= u(1)v$ for $u,v\in W$.

\medskip
Let $\frak{g}$  be a finite dimensional simple Lie algebra, $\mathfrak{h}\subset\frak{g}$ be a Cartan sub-algebra, and $\Psi$ be the associated root system with simple roots $\Delta$. Also, we set $(\cdot,\cdot)$ be the non-degenerate symmetric invariant bilinear form on $\frak{g}$ normalized so that the longest positive root $\theta\in \Psi$ satisfies $(\theta,\theta)=2$.\ The corresponding affine Kac-Moody Lie algebra $\hat{\frak{g}}$ is defined as
$$\hat{\frak{g}}=\frak{g}\otimes\mathbb{C}[t,t^{-1}]\oplus\mathbb{C} K,$$
where $K$ is central and the bracket  is defined  for $u,v\in \frak{g}$, $m,n\in \mathbb{Z}$ as
$$[u(m),v(n)]=[u,v](m+n)+m\delta_{m+n,0}(u,v)K\ \  (u(m)=u\otimes t^m).$$

Let $W\subset V_1$ be a Lie algebra such that $\frak{g}\subset W$ and $\langle \ , \ \rangle$ is non-degenerate on $W$. If $\langle \ , \ \rangle$ is non-degenerate on $\frak{g}$, then the map 
$$\hat{\frak{g}}\rightarrow End(V); u(m)\mapsto u(m),~u\in \frak{g}, m\in\mathbb{Z},$$ 
is a representation of the affine Kac-Moody algebra $\hat{g}$ of level $k$ where 
$$\langle \ , \ \rangle=u(1)v=k(u,v)\text{ for }u,v\in \frak{g}.$$

\begin{theorem}\label{wlko} Let $W\subset V_1$ be a Lie algebra such that $\langle \ , \ \rangle$ is nondegenerate on $W$. If $\frak{g}$ is a simple Lie-subalgebra of $W$ and $U$ is the vertex operator sub-algebra of $V$ generated by $\frak{g}$, then $\langle \ , \ \rangle$ is nondegenerate on $\frak{g}$, and $U$ is isomorphic to 
$L_{\hat{\frak{g}}}(k,0)$. Furthermore, $k$ is a positive integer and $V$ is an integrable $\hat{\frak{g}}$-module.
\end{theorem}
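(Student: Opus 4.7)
The plan is to extend, to the $\mathbb{N}$-graded setting, the CFT-type argument of \cite{DM4}, Theorem 3.1, by first producing an $\hat{\frak{g}}$-action on $V$ at some level $k$, then identifying $U$ as a highest-weight $\hat{\frak{g}}$-module, and finally forcing $k\in \mathbb{Z}_{>0}$ via integrability. Since $\frak{g}\subseteq W\subseteq V_1$ is a Lie subalgebra, for $u,v\in \frak{g}$ the Lie-bracket skew-symmetry $u(0)v=-v(0)u$ together with VOA skew-symmetry and $V_n=0$ for $n<0$ forces $u(1)v\in \ker L(-1)=\mathbb{C}\mathbf{1}$ and $u(i)v=0$ for $i\geq 2$. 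Writing $u(1)v=k(u,v)\mathbf{1}$, the map $(u,v)\mapsto k(u,v)$ is a symmetric $\frak{g}$-invariant bilinear form on simple $\frak{g}$, hence equals $k$ times the normalized form for a scalar $k\in\mathbb{C}$ (by uniqueness of such forms). The commutator formula then produces
\[
[u(m),v(n)] = [u,v](m+n) + mk(u,v)\delta_{m+n,0}\cdot Id_V, \qquad u,v\in\frak{g};\; m,n\in\mathbb{Z},
\]
realizing $V$ as a $\hat{\frak{g}}$-module of level $k$. Since $\langle u,v\rangle = k(u,v)(\mathbf{1},t)$ and $(\mathbf{1},t)\neq 0$ by \eqref{1tperp}, nondegeneracy of $\langle\ ,\ \rangle|_{\frak{g}}$ is equivalent to $k\neq 0$; its radical is in any case a Lie ideal of $\frak{g}$ by Proposition \ref{propbilform}, hence $0$ or $\frak{g}$ by simplicity.

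Next I would identify $U$ as a highest-weight module. Because $u(m)\mathbf{1}=0$ for $u\in\frak{g}$ and $m\geq 0$ (vacuum axiom combined with the $\mathbb{N}$-grading), $\mathbf{1}$ is a highest-weight vector of weight $(0,k)$, so the cyclic $\hat{\frak{g}}$-submodule $U(\hat{\frak{g}})\mathbf{1}$ is a quotient of the generalized Verma module $V_{\hat{\frak{g}}}(k,0)$. By the creation formula this cyclic module coincides with the vertex subalgebra $U\subseteq V$ generated by $\frak{g}$, so $U$ is already a quotient of $V_{\hat{\frak{g}}}(k,0)$ as a $\hat{\frak{g}}$-module.

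Finally, to pin down $k\in \mathbb{Z}_{>0}$, $U\cong L_{\hat{\frak{g}}}(k,0)$, and integrability of $V$, I would invoke the affine $\frak{sl}_2$-triple argument of \cite{DM4}. For the highest root $\theta\in \frak{g}$ the triple $(f_\theta(-1),\,-h_\theta(0)-K,\,e_\theta(1))\subset \hat{\frak{g}}$ acts on $\mathbf{1}$ as a lowest-weight vector of $\frak{sl}_2$-weight $-k$. Proposition \ref{propgen}, together with the finite-dimensionality of each $V_n$ (strengthened by $C_2$-cofiniteness from the enclosing Theorem \ref{thmC2} setup), bounds $\dim U_n$ polynomially; this growth bound forces $f_\theta(-1)^{k+1}\mathbf{1}$ to vanish in $U$, yielding $k\in \mathbb{Z}_{\geq 0}$. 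The degenerate case $k=0$ is excluded separately, e.g.\ by observing that a level-zero vacuum $\hat{\frak{g}}$-module compatible with a simple VOA structure collapses in a way incompatible with $\frak{g}\neq 0$. With $k$ positive integral, $U$ is the integrable irreducible quotient $L_{\hat{\frak{g}}}(k,0)$; the local nilpotency of the $f_\alpha(-1)$ on $U$ then propagates to $V$ through its $U$-module structure, giving integrability. The main obstacle is precisely this integrability step: the affine $\frak{sl}_2$-triples do not preserve the $L(0)$-grading, so translating finite-dimensional $L(0)$-weight spaces into local nilpotency requires the careful growth bounds from Proposition \ref{propgen} and $C_2$-cofiniteness, and the exclusion of $k=0$ demands an ad hoc argument using the interplay between $\frak{g}$ and the ambient conformal structure of $V$.
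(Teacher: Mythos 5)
Your overall architecture coincides with the paper's: affinize $\frak{g}$ via the commutator formula, reduce to $\frak{g}=sl_2$, and use the spanning set of Proposition \ref{propgen} (which is indeed where $C_2$-cofiniteness enters) to prove nilpotency of a raising operator on $\mathbf{1}$, whence integrability, $k\in\mathbb{Z}_{\geq 0}$, and then $k>0$ and nondegeneracy of $\langle\ ,\ \rangle|_{\frak{g}}$. However, the step you lean on --- ``Proposition \ref{propgen}\ \dots\ bounds $\dim U_n$ polynomially; this growth bound forces $f_\theta(-1)^{k+1}\mathbf{1}$ to vanish'' --- does not work as stated. First, $C_2$-cofiniteness does not give polynomial growth of graded dimensions, and even if it did, a dimension bound cannot detect the nonvanishing of a power $e_\theta(-1)^{N}\mathbf{1}$: for each $N$ this is a single vector sitting alone in its graded piece $V_N$, so it contributes at most $1$ to $\dim V_N$ and violates no bound on dimensions. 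The quantity the spanning set actually controls is the $\frak{g}$-\emph{weight}: by (\ref{nbasis}), every vector in $V_n$ with $n\leq t(t+1)/2$ is a combination of products of at most $2t$ factors drawn from the finite set $X$, hence has $\frak{g}$-weight at most $2t\lambda_0=2tm\alpha$, i.e.\ $O(\sqrt{n})$; whereas $x_\alpha(-1)^{l(l+1)/2}\mathbf{1}$ lies in $V_{l(l+1)/2}$ and has $\frak{g}$-weight $\tfrac{l(l+1)}{2}\alpha$, growing \emph{linearly} in the conformal weight. Taking $l+1>4m$ yields the contradiction and hence the vanishing. Second, aiming at the exponent $k+1$ is circular: you do not yet know $k$ is a nonnegative integer, so you must kill $e_\theta(-1)^{N}\mathbf{1}$ for an explicit large $N$ determined by $X$ alone, and only afterwards invoke the affine $sl_2$-triple through $\mathbf{1}$ to conclude $k\in\mathbb{Z}_{\geq 0}$ together with integrability.

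The remaining ingredients of your proposal (deriving $u(1)v\in\mathbb{C}\mathbf{1}$ and $u(i)v=0$, $i\geq 2$, from skew-symmetry, weights, and $Z(V)=\mathbb{C}\mathbf{1}$; identifying $\langle u,v\rangle=k(u,v)(\mathbf{1},t)$ with $(\mathbf{1},t)\neq 0$ so that nondegeneracy on $\frak{g}$ is equivalent to $k\neq 0$; realizing $U$ as a quotient of the vacuum Verma module) are consistent with the paper. Your exclusion of $k=0$ should be made concrete: if $k=0$ then integrability forces $e_\theta(-1)\mathbf{1}=e_\theta=0$, contradicting $\frak{g}\neq 0$; the vague ``collapses in a way incompatible with $\frak{g}\neq 0$'' is not an argument.
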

\noindent
{\bf Proof}.\  We will follow the proof in Theorem 3.1 of \cite{DM4}.\ First, assume that $\frak{g} =sl_2(\mathbb{C})$ with standard basis $\alpha,x_{\alpha},x_{-\alpha}$.\ Hence, $(\alpha,\alpha)=2$.\ Since each homogeneous subspace of $V$ is a completely reducible $\frak{g}$-module, then $V$ is also a completely reducible 
$\frak{g}$-module.\ A non-zero element $v\in V$ is called a weight vector for $\frak{g}$ of $\frak{g}$-weight $\lambda$ ($\lambda\in\mathbb{C}\alpha$) if $\alpha (0)v=(\alpha,\lambda)v$. Here, 
$\lambda\in \frac{1}{2}\mathbb{Z}\alpha$.

\medskip
We now make use of Proposition \ref{propgen}.\ Let $X=\{x^i\}_{i\in I}\cup \{y^j\}_{j\in J}$ be a set of homogeneous weight vectors in $V$ which are representatives of a basis of $V/C_2(V)$.\ The $x^i$ are vectors whose weights are greater than or equal to 1 and $y^j$ are vectors whose weights are zero. Since $X$ is finite, there is a nonnegative elements $\lambda_0=m\alpha\in\frac{1}{2}\mathbb{Z}\alpha$ such that the weight of each element in $X$ is bounded above by $\lambda_0$. For any integer $t\geq 0$, we have
\begin{eqnarray}\label{nbasis}
\oplus_{n\leq t(t+1)/2}V_n&\subseteq &Span_{\mathbb{C}}\{x^{i_1}(-n_1)...x^{i_s}(-n_s)y^{j_1}(-m_1)...y^{j_r}(-m_{r}){\bf 1}~|~\nonumber\\
& &\ \ \ \ \ \ \ \ \ \ \ \ n_1>n_2>...>n_s>0, m_1\geq m_2\geq ...\geq m_r>0,\nonumber\\
& &\ \ \ \ \ \ \ \ \ \ \ \ 0\leq s,r\leq t\}.
\end{eqnarray}
Furthermore, if $n\leq \frac{t(t+1)}{2}$, then a $\frak{g}$-weight vector in $V_n$ has $\frak{g}$-weight less than or equal to $2t\lambda_0=2tm\alpha$.

\medskip
Let $l$ be an integer such that $l+1>4m$ and we let $$u=(x_{\alpha})(-1)^{l(l+1)/2}{\bf 1}.$$ We claim that $u=0$. Assume $u\neq 0$. By (\ref{nbasis}), we can conclude that the $\frak{g}$-weight of $u$ is at most $2lm\alpha$. This contradicts with the direct calculation which shows that the $\frak{g}$-weight of $u$ is $\frac{l(l+1)}{2}\alpha$. Hence, $u=0$. This implies that $U$ is integrable. Furthermore, we have $V$ is integrable, $k$ is a positive integer and $\langle \ , \ \rangle$ is non-degenerate.

\medskip
 This proves the Theorem for $\frak{g}=sl_2$.\ The general case follows easily from this (cf.\ \cite{DM4}).\ $\hfill \Box$

\begin{lemma}\label{snonde} Let $S$ be a Levi sub-algebra of $V_1$. Then $\langle \ , \ \rangle$ is non-degenerate on $S$ and $Rad\langle \ , \ \rangle\cap S=\{0\}$.
\end{lemma}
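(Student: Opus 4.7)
The plan is to reduce the claim to showing $\langle\,,\,\rangle|_{\mathfrak{g}}\ne 0$ on each simple ideal $\mathfrak{g}$ of $S$, and then to adapt the integrability argument from the proof of Theorem \ref{wlko} to each such $\mathfrak{g}$ in turn.

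First, decompose $S=\mathfrak{g}_1\oplus\cdots\oplus\mathfrak{g}_r$ into simple ideals. Invariance of $\langle\,,\,\rangle$ on $V_1$ (Proposition \ref{propbilform}), together with the fact that distinct simple ideals of a semisimple Lie algebra commute, shows $\langle\mathfrak{g}_i,\mathfrak{g}_j\rangle=0$ for $i\ne j$, so $\langle\,,\,\rangle|_S$ is block diagonal with respect to this decomposition. Since every symmetric invariant bilinear form on a simple Lie algebra is either identically zero or a non-zero scalar multiple of the Killing form, non-degeneracy of $\langle\,,\,\rangle|_S$ is equivalent to the non-vanishing of $\langle\,,\,\rangle|_{\mathfrak{g}_i}$ for every $i$. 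Moreover, once non-degeneracy on $S$ is known, $\mathrm{Rad}\langle\,,\,\rangle\cap S=\{0\}$ follows at once, because any $u\in S$ with $\langle u,V_1\rangle=0$ in particular satisfies $\langle u,S\rangle=0$, forcing $u=0$.

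Fix a simple ideal $\mathfrak{g}=\mathfrak{g}_i$. For $a,b\in\mathfrak{g}$, skew-symmetry of $a(0)b$ combined with the VOA identity $a(0)b+b(0)a=L(-1)(a(1)b)$ forces $L(-1)(a(1)b)=0$; hence, by simplicity of $V$, $a(1)b\in V_0\cap\ker L(-1)=\mathbb{C}\mathbf{1}$. Write $a(1)b=\phi(a,b)\mathbf{1}$. Since $\phi$ is a symmetric invariant form on the simple Lie algebra $\mathfrak{g}$, one has $\phi=k(\,,\,)$ for some $k\in\mathbb{C}$ (with $(\,,\,)$ the standard normalised form), and then $\langle a,b\rangle=k(\mathbf{1},t)(a,b)$. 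By (\ref{1tperp}), $(\mathbf{1},t)\ne 0$, so the task reduces to showing $k\ne 0$. But the identity $a(1)b=k(a,b)\mathbf{1}$ turns the VOA commutator formula into the defining relations $[u(m),v(n)]=[u,v](m+n)+mk(u,v)\delta_{m+n,0}\,\mathrm{Id}$ of $\widehat{\mathfrak{g}}$ at level $k$, so $V$ carries a $\widehat{\mathfrak{g}}$-module structure at level $k$. Now pick a long-root $sl_2$-triple $(\alpha,x_\alpha,x_{-\alpha})$ in $\mathfrak{g}$ and rerun the $\mathfrak{g}$-weight vs.\ conformal-weight estimate from the proof of Theorem \ref{wlko} --- which invokes only Proposition \ref{propgen} and the $C_2$-cofiniteness of $V$ --- to get $x_\alpha(-1)^N\mathbf{1}=0$ for all sufficiently large $N$. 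This forces integrability of $V$ as a $\widehat{\mathfrak{g}}$-module; since $x_\alpha(-1)\mathbf{1}=x_\alpha\ne 0$, the cyclic submodule $U(\widehat{\mathfrak{g}})\mathbf{1}$ is not the trivial representation, and the classification of integrable highest-weight $\widehat{\mathfrak{g}}$-modules forces $k$ to be a positive integer; in particular $k\ne 0$, as needed.

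The main obstacle is a small point of logic in invoking Theorem \ref{wlko}: it is formally stated under the hypothesis that $\mathfrak{g}$ already lies in a Lie algebra $W\subseteq V_1$ on which $\langle\,,\,\rangle$ is \emph{non-degenerate}, which is exactly what we are trying to establish. The key point to verify is that this ambient non-degeneracy hypothesis plays no actual role in the proof of Theorem \ref{wlko}: what is really used there is only the fact that $\mathfrak{g}$ is a simple Lie subalgebra of $V_1$ (to force $a(1)b\in\mathbb{C}\mathbf{1}$), together with Proposition \ref{propgen} and $C_2$-cofiniteness. With this observation, the argument applies verbatim to each simple ideal of $S$ in turn, and both conclusions of the lemma follow.
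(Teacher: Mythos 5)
Your proof is correct and follows essentially the same route as the paper: both reduce to running the $C_2$-cofiniteness/integrability argument of Theorem \ref{wlko} (i.e.\ of \cite{DM4}) on a simple Lie subalgebra of $V_1$ and ruling out level zero, the only difference being organizational --- you decompose $S$ into simple ideals and show each level $k_i\neq 0$, whereas the paper treats the radical $X$ of the form $u(1)v$ on $S$ as a semisimple ideal all at once and derives a contradiction from $L(0,0)$ being one-dimensional. Your observation that the nondegeneracy hypothesis on the ambient $W$ in Theorem \ref{wlko} is not actually used in the integrability step is accurate and is implicitly what the paper relies on as well.
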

\begin{proof} 
Clearly, for $u,v\in S$, we have $u(1)v\in\mathbb{C}{\bf 1}$. Let $f:S\times S\rightarrow\mathbb{C}{\bf 1}$ be a map defined by $f((u,v))=u(1)v$. Since $u(1)v=v(1)u$ and $$(w(0)u)(1)v=-(u(0)w)(1)v=-(u(0)w(1)v-w(1)u(0)v)=w(1)u(0)v$$ for $u,v,w\in S$, we can conclude that $f$ is a symmetric invariant bilinear form on $S$. For convenience, we set $X=Rad(f)$. Since $S$ is semi-simple and $X$ is a $S$-module, these imply that $S=X\oplus W$ for some $S$-module $W$. Note that $W$ and $X$ are semi-simple and $S\cap Rad\langle \ , \ \rangle\subseteq X$. 

\medskip
For $u,v\in X$, we have $u(1)v=0$. Hence, the vertex operators $Y(u,z)$, $u\in X$, generate representation of the loop algebra in the sense that
$$[u(m),v(n)]=(u(0)v)(m+n),\text{ for }u,v\in X.$$ 
Following the proof of Theorem 3.1 in \cite{DM4}, we can show that the representation on $V$ is integrable and the vertex operator sub-algebra $U$ generated by a simple component of $X$ is the corresponding simple vertex operator algebra $L(k,0)$ and $k=0$. However, the maximal submodule of the Verma module $V(0,0)$, whose quotient is $L(0,0)$, has co-dimension one. This is not possible if $X\neq \{0\}$. Consequently, we have $X=0$ and $S\cap Rad\langle \ , \ \rangle=\{0\}$. Hence $\langle \ , \ \rangle$ is non-degenerate on $S$. 
\end{proof}

Theorem \ref{thmC2} follows from these results.

\section{Examples of shifted vertex operator algebras} \label{SSsl2}
To illustrate previous results, in this Section we consider some particular classes of shifted vertex operator algebras.

\subsection{Shifted $\widehat{sl_2}$}
 We will show that the 
simple vertex 
operator algebra (WZW model) $L_{\widehat{sl_2}}(k, 0)$ corresponding to affine 
$sl_2$ at (positive integral) level $k$ has a canonical shift to an $\mathbb{N}$-graded
vertex operator algebra $L_{\hat{sl_2}}(k, 0)^H$, and that the resulting de Rham structure on $V_0$
is that of complex projective space  $\mathbb{CP}^k$.\ The precise result is the following.

\begin{theorem} Let $e, f, h$ be Chevalley generators of $sl_2$, and set $H=h/2$.\ Then
the following hold:
\begin{eqnarray*}
&&(a)\ L_{\hat{sl_2}}(k,0)^H \ \mbox{is a simple,  $\mathbb{N}$-graded, self-dual vertex operator algebra}. \\
&&(b)\ \mbox{The algebra structure on the zero weight space of
$L_{\hat{sl_2}}(k,0)^H$ is} \\
&&\ \ \ \ \ \mbox{isomorphic to $\mathbb{C}[x]/\langle x^{k+1}\rangle$, where $x=e$}.
\end{eqnarray*}
 \end{theorem}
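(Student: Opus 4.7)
The plan is to verify (a) via the general theory of shifted VOAs and integrability of $\widehat{sl_2}$-modules, then to identify $V_0 := (L_{\widehat{sl_2}}(k,0)^H)_0$ and its multiplication explicitly for (b). Write $W := L_{\widehat{sl_2}}(k,0)$ with original Virasoro $\omega'$ and $\omega := \omega' + L'(-1)H$, so that $L(0) = L'(0) - H(0) = L'(0) - h(0)/2$.

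\smallskip\noindent
\emph{Part (a).} That $\omega$ is a Virasoro vector for $H \in W_1$ is standard. For $\mathbb{N}$-gradedness, integrability of $W$ as an $\widehat{sl_2}$-module gives that $h(0)$-eigenvalues on $W_m$ are even integers in $\{-2m,\dots,2m\}$ (from the monomial analysis of (b) below), so $L(0)$-eigenvalues lie in $\{0,1,\dots,2m\}$ and the eigenspaces are finite dimensional. Simplicity of $V$ transfers from $W$ since the two share Fock space, vacuum, and set of fields, hence the same ideals. For self-duality, I would appeal to the construction of invariant forms for shifted theories in \cite{DM3}; alternatively, once $V_0$ is computed in (b), Li's criterion \cite{L} reduces self-duality to $L(1)V_1 \subsetneq V_0$, which one can verify by noting that $L(1)$ decreases $L'(0)$-weight by $1$ and preserves $h(0)$-weight, hence maps $W_{m,m-1}$ into $W_{m-1,m-1}$, and the top component $\mathbb{C}v_k \subset V_0$ is not hit.

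\smallskip\noindent
\emph{Part (b).} A vector $v \in W$ lies in $V_0$ iff $v \in W_{m,2m}$ for some $m \geq 0$, where $W_{m,n} := \{v : L'(0)v = mv,\ h(0)v = nv\}$. Every element of $W_m$ is a linear combination of monomials $x_1(-n_1)\cdots x_r(-n_r)\mathbf{1}$ with $x_i \in \{e,h,f\}$, $n_i \geq 1$, $\sum n_i = m$, whose $h(0)$-weight equals $2(\#\{x_i=e\}) - 2(\#\{x_i=f\}) \leq 2m$, with equality iff every $x_i = e$ and every $n_i = 1$, giving the single monomial $e(-1)^m\mathbf{1}$. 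The integrability relation $e(-1)^{k+1}\mathbf{1} = 0$ in $L_{\widehat{sl_2}}(k,0)$ therefore yields
\[
V_0 \;=\; \bigoplus_{m=0}^{k} \mathbb{C}\, e(-1)^m \mathbf{1}, \qquad \dim V_0 = k+1.
\]

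\smallskip\noindent
\emph{Algebra structure on $V_0$.} Set $v_m := e(-1)^m\mathbf{1}$. Since $[e,e] = 0$ in $sl_2$ and $(e,e) = 0$ in the normalized form on $sl_2$, one has $e(i)e = 0$ for all $i \geq 0$, so $[e(p),e(q)] = 0$ for all $p,q$. Consequently $e(i)v_n = 0$ for $i \geq 0$, and an induction on $m$ via the iterate identity
\[
v_m(p)v_n \;=\; \sum_{i \geq 0}\bigl[e(-1-i)v_{m-1}(p+i)v_n \;+\; v_{m-1}(p-1-i)e(i)v_n\bigr]
\]
shows simultaneously that $v_m(p)v_n = 0$ for $p \geq 0$ and that $v_m(-1)v_n = e(-1)v_{m-1}(-1)v_n = \cdots = v_{m+n}$. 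The assignment $x \mapsto v_1 = e$ therefore induces an algebra isomorphism $\mathbb{C}[x]/\langle x^{k+1}\rangle \xrightarrow{\sim} V_0$.

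\smallskip\noindent
\emph{Main obstacle.} The most delicate point is self-duality: the invariance identity for $V$ is governed by the shifted $L(0),L(1)$ rather than those of $W$, so the original form on $W$ need not be invariant for $V$, and a separate argument (either direct construction as in \cite{DM3}, or Li's criterion combined with the $V_0$ computation above) is required. The remaining steps reduce to elementary $\widehat{sl_2}$-monomial bookkeeping and the iterate identity.
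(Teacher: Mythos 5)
Your overall route coincides with the paper's: identify $V_0$ by the monomial/weight bookkeeping together with $Y(e,z)^{k+1}=0$, get $\mathbb{N}$-gradedness from the same bound, transfer simplicity via the shared fields, and reduce self-duality to Li's criterion $L(1)V_1\subsetneq V_0$. Parts of (b) and the nonnegativity/integrality in (a) are fine.

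There is, however, a genuine gap at exactly the point you flag as delicate. Your justification that $e(-1)^k\mathbf{1}$ is not hit is purely a grading argument: $L(1)=L'(1)-h(1)$ preserves the $h(0)$-weight and lowers the $L'(0)$-weight by one, so it maps $W_{m,m-1}$ into $W_{m-1,m-1}$. But this only shows that the unique component of $V_1$ that \emph{could} surject onto the top piece $W_{k,k}=\mathbb{C}e(-1)^k\mathbf{1}$ is $W_{k+1,k}$, and that component is \emph{not} zero: it is spanned by $e(-2)e(-1)^{k-1}\mathbf{1}$ (with $h(-1)e(-1)^{k}\mathbf{1}$ proportional to it, via $f(0)e(-1)^{k+1}\mathbf{1}=0$). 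Since $W_{k,k}$ is one-dimensional, a priori $L(1)W_{k+1,k}$ is either $0$ or all of $W_{k,k}$, and in the latter case $L(1)V_1=V_0$ and $V$ is \emph{not} self-dual. So the grading gives no conclusion; one must actually compute. The paper does this: using $L'(1)e(-1)^{i}\mathbf{1}=0$ and $h(1)e(-1)^{i}\mathbf{1}=0$ one finds $L'(1)e(-2)e(-1)^{k-1}\mathbf{1}=2e(-1)^{k}\mathbf{1}$ and $h(1)e(-2)e(-1)^{k-1}\mathbf{1}=2e(-1)^{k}\mathbf{1}$, and it is precisely the cancellation of these two nonzero terms in $L(1)=L'(1)-h(1)$ that kills $W_{k+1,k}$ (whereas $L(1)h(-1)e(-1)^{i}\mathbf{1}=2(i-k)e(-1)^{i}\mathbf{1}$ for $i\le k-1$, so the image is exactly $\mathrm{span}\{e(-1)^{i}\mathbf{1}:i\le k-1\}$). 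Your alternative fallback, ``appeal to the construction of invariant forms for shifted theories in [DM3],'' does not close the gap either: the self-duality criterion there ($2h\in L$) is specific to lattice shifts, and no general self-duality statement for shifted triples is available to quote. You should add the explicit $L(1)$ computation on $W_{k+1,k}$.

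A smaller point: your finite-dimensionality claim for the new graded pieces is asserted from the analysis of $V_0$ alone; for general $V_n$ one also needs the integrability bound (at most $k$ factors of $e$) to cap the $L'(0)$-weights contributing to $V_n$, as in the paper's formula $L_H(0)v_{IJK}=\sum(l_i-1)+\sum(m_j+1)+\sum n_k$ with $r\le k$.
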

\noindent
{\bf Proof}.\ Let $W=L_{\hat{sl_2}}(k, 0)$.\ It is spanned by states $v_{IJK}:=e_{I}f_Jh_K\mathbf{1}$, where 
we write $e_I= e(-l_1)\hdots e(-l_r),  f_J = f(-m_1) \hdots f(-m_s), h_K=h(-n_1)\hdots h(-n_t)$
for $l_i, m_i, n_i>0.$\ Note that $v_{IJK}\in W_n$, where $n=\sum l_i +\sum m_i+\sum n_i$

\medskip
Recall from (\ref{Lshift}) that $L_H(0)=L(0)-H(0)$.\ We have
\begin{eqnarray*}
[H(0),e(n)]&=& [H, e](n)=e(n),\\
{[H(0),f(n)]}&=&[H, f](n)=-f(n),\\
{[H(0),h(n)]}&=&[H, h](n)=0.
\end{eqnarray*}
Then 
$H(0)v_{IJK} = (r-s)v_{IJK}$, so that
\begin{eqnarray}\label{IJKwt}
L_H(0)v_{IJK} = \left( \sum (l_i-1) +\sum (m_j+1)+\sum n_k\right)v_{IJK}
\end{eqnarray}

It is well-known
(e.g., \cite{DL}, Propositions 13.16 and 13.17) that $Y(e, z)^{k+1}=0$.\
Thus we may take $r$ to be no greater than $k$.\ It follows from
(\ref{IJKwt}) that the eigenvalues of $L_H(0)$ are integral and bounded below by $0$, 
and that the eigenspaces 
are finite-dimensional.\ Therefore, $V:=W^H$ is indeed
an $\mathbb{N}$-graded vertex operator algebra.\ Because $W$ is simple then so too is $V$, since they share the same fields.

\bigskip
Next we  show that $V$ is self-dual, which amounts to the assertion
that $L_H(1)V_1$ is \emph{properly} contained in $V_0$.\ 
Observe from
(\ref{IJKwt}) that the states $e(-1)^{p}{\bf 1}$
$(0 \leq p \leq k)$ span in $V_0$, while
$V_1$ is spanned by the  states
$\{h(-1)e(-1)^i{\bf 1},e(-2)e(-1)^i{\bf 1}~|~ 0\leq i\leq k-1\}$.\
We will show that $e(-1)^k\mathbf{1}$ does \emph{not} lie in the image
of $L_H(1)$.

\medskip
For $g\in sl_2, m\geq 1$, we have $$[L(1),g(-m)]=mg(1-m).$$ Since $L(1)e=0$ and $$L(1)e(-1)^{j+1}{\bf 1}=e(-1)L(1)e^{j}(-1){\bf 1}+e(0)e^{j}(-1){\bf 1}=e(-1)L(1)e^{j}(-1){\bf 1}$$ for $j\geq 0$, we can conclude by induction that 
\begin{equation*}L(1)e(-1)^i{\bf 1}=0\text{ for all }i\geq 1.\end{equation*} 
Similarly, because $H(1)e=0$ and
\begin{eqnarray*}
H(1)e(-1)^{j+1}{\bf 1}=e(-1)H(1)e(-1)^j{\bf 1}+e(0)e(-1)^{j}{\bf 1}=e(-1)H(1)e(-1)^j{\bf 1},
\end{eqnarray*} 
then
\begin{equation*}H(1)e(-1)^i{\bf 1}=0\text{ for all }i\geq 0.\end{equation*}

\medskip
We can conclude that for $0\leq i\leq k-1$,
\begin{eqnarray*}
L_H(1)h(-1)e(-1)^{i}{\bf 1}&=&(L(1)-2H(1))h(-1)e(-1)^{i}{\bf 1}\\
&=&2ie(-1)^i{\bf 1}-2ke(-1)^i{\bf 1}\\
&=&2(i-k)e(-1)^i{\bf 1},
\end{eqnarray*}
while
\begin{eqnarray*}
 L_H(1)e(-2)e(-1)^j{\bf 1}=(L(1)-2H(1))e(-2)e(-1)^j{\bf 1}=0.
 \end{eqnarray*}
Our assertion that $e(-1)^k\mathbf{1}\notin imL_H(1)$ follows from these calculations.\
This establishes part (a) of the Theorem.

\medskip
Finally, if we set $x:= e(-1)\mathbf{1}=e$ then by induction
$e(-1)^i\mathbf{1}=x.x^{i-1}=x^i$, so the algebra structure on $V_0$ is
isomorphic $\mathbb{C}[x]/x^{k+1}$ and part (b) holds.\ This completes the proof of the Theorem.
$\hfill \Box$

\noindent
\begin{remark} Suitably normalized, the invariant bilinear form on 
$V_0$ satisfies
$(x^p, x^q) = \delta_{p+q, k}$ (cf.\ Theorem \ref{derham}).\ $V_0$ can be identified with
the de Rham cohomology of $\mathbb{CP}^k$ ($x$ has degree $2$)
equipped with the pairing arising from Poincar\'{e} duality. 
\end{remark}

\subsection{Shifted lattice theories}
Let $L$ be a positive-definite even lattice of rank $d$ with inner product 
$( \ , \ ):L\times L\rightarrow \mathbb{Z}$.\ Let $H=\mathbb{C}\otimes L$ be the corresponding complex linear space equipped with the 
$\mathbb{C}$-linear extension of $( \ , \ )$.\ The dual lattice of $L$ is 
$$L^{\circ}=\{~f \in \mathbb{R}\otimes L~|~(f,\alpha)\in\mathbb{Z}\text{ all }\alpha\in L\}.$$
Let $(M(1),Y,{\bf 1}, \omega_L)$ be the free bosonic vertex operator algebra based on $H$ and let $(V_L, Y,{\bf 1},\omega_L)$ be the corresponding lattice vertex operator algebra.\ Both vertex operator algebras have central charge $d$, and the Fock space of $V_L$ is 
$$V_L=M(1)\otimes \mathbb{C}[L],$$ where $\mathbb{C}[L]$ is the group algebra of $L$. 

\medskip
For a state $h\in H\subset (V_L)_1$, we set $\omega_h=\omega_L+h(-2){\bf 1}$,
with $V_{L,h}=(V_L,Y,{\bf 1}, \omega_h)$.
\begin{lemma}(\cite{DM3}).\ Suppose that $h\in L^0$.\ Then 
$V_{L,h}$ is a vertex operator algebra, and it is self-dual if, and only if, $2h\in L$.
\end{lemma}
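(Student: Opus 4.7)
The lemma has two parts, and I would treat them separately: verifying the VOA axioms for $V_{L,h}$, and then characterizing self-duality.

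For the first part, that $\omega_h = \omega_L + h(-2)\mathbf{1}$ is a Virasoro vector for any $h \in H$ (with central charge $d - 12(h,h)$) is the standard computation already invoked earlier in the paper. The substantive point is the grading axiom. Decompose $V_L = \bigoplus_{\alpha \in L} M(1)\otimes e^\alpha$. On the summand $M(1)\otimes e^\alpha$, $L_h(0) = L(0) - h(0)$ has eigenvalues $\tfrac{1}{2}(\alpha,\alpha) - (h,\alpha) + N$ with $N \in \mathbb{Z}_{\geq 0}$. Since $L$ is even and $h \in L^\circ$, both $\tfrac{1}{2}(\alpha,\alpha)$ and $(h,\alpha)$ are integers, so the spectrum is integral. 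Completing the square, $\tfrac{1}{2}(\alpha,\alpha) - (h,\alpha) = \tfrac{1}{2}(\alpha-h,\alpha-h) - \tfrac{1}{2}(h,h)$, gives the lower bound $-\tfrac{1}{2}(h,h)$, and each $L_h(0)$-eigenspace is finite-dimensional because only finitely many lattice points $\alpha$ lie in any fixed ball around $h$ and each $M(1)$-summand has finite-dimensional weight spaces. Since $L_h(-1) = L(-1)$, the translation axiom is unchanged, and the remaining VOA axioms are inherited from $V_L$.

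For the self-duality, note first that since $V_L$ is simple and $V_{L,h}$ shares the same fields, $V_{L,h}$ is also simple, so any nonzero invariant bilinear form on $V_{L,h}$ is unique up to scalar by Schur's lemma. The unique invariant form $(\cdot,\cdot)$ on $V_L$ pairs $M(1)\otimes e^\alpha$ with $M(1)\otimes e^{-\alpha}$ via the standard form on $M(1)$ and the canonical pairing of $\mathbb{C}[L]$. My strategy for necessity is to show that any invariant form on $V_{L,h}$ must arise from $(\cdot,\cdot)$ by a twist of the lattice labels, and that matching $L_h(0)$-weights forces this twist to be translation by $2h$: the identity $\tfrac{1}{2}(\alpha,\alpha) - (h,\alpha) = \tfrac{1}{2}(\beta,\beta) - (h,\beta)$ combined with the requirement of nonzero pairing (which must factor through the $V_L$-form) yields $\beta = -\alpha + 2h$. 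For $e^{-\alpha + 2h}$ to remain inside $V_L$ for all $\alpha \in L$, we need $2h \in L$.

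For sufficiency, assuming $2h \in L$, I would construct an explicit invariant form by combining $(\cdot,\cdot)$ on $V_L$ with Li's $\Delta$-operator $\Delta(-h,z)$ built from the modes of $h$. This operator intertwines the $V_L$- and $V_{L,h}$-module structures on the common Fock space, and by postcomposing the original form with an appropriate evaluation of this intertwiner one obtains a candidate form $(u,v)_h$. The hypothesis $2h \in L$ is precisely what guarantees that the formal series involved take values inside $V_L$ (rather than in a module for an enlarged lattice) and that the pairing is nonzero. The main obstacle is the verification of the invariance identity (\ref{invbilform1}) for the candidate form: because $\omega_L$ and $\omega_h$ give different $L(1)$- and $L(0)$-actions, invariance for the original form must be conjugated by the $\Delta$-operator to yield the required invariance for $\omega_h$, and carrying out this conjugation cleanly using the standard operator product identities $Y(\Delta(h,z)u, z) = \Delta(h,z) Y(u,z) \Delta(h,z)^{-1}$ is the technical heart of the argument.
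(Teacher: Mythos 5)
The paper does not actually prove this lemma --- it is imported verbatim from \cite{DM3} --- so your attempt is being measured against the standard argument there rather than against anything in the text. Your first half is correct and essentially complete: the Virasoro property of $\omega_h$ is the computation already invoked for Theorem 3.1 of \cite{DM3}, integrality of the $L_h(0)$-spectrum follows from $L$ even and $h\in L^{\circ}$, lower truncation and finite-dimensionality of the graded pieces follow from $\tfrac{1}{2}(\alpha,\alpha)-(h,\alpha)=\tfrac{1}{2}(\alpha-h,\alpha-h)-\tfrac{1}{2}(h,h)$ together with positive-definiteness, and $L_h(-1)=L(-1)$ because $(L(-1)h)(0)=0$, so the remaining axioms are untouched.

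The self-duality half, however, is a plan rather than a proof, and the plan has a genuine gap at its central step. For necessity you assert that any invariant form on $V_{L,h}$ ``must arise from $(\cdot,\cdot)$ by a twist of the lattice labels''; that is precisely the statement requiring proof, and the ingredients you offer do not supply it. The weight-matching identity $\tfrac{1}{2}(\alpha,\alpha)-(h,\alpha)=\tfrac{1}{2}(\beta,\beta)-(h,\beta)$ only says that $\alpha-h$ and $\beta-h$ have equal length, which is very far from forcing $\beta=2h-\alpha$; and invariance for $V_{L,h}$ is taken with respect to $e^{zL_h(1)}(-z^{-2})^{L_h(0)}$, so there is no a priori reason the new form ``factors through'' the $V_L$-form at all. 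Even granting the twist, you would still need to exhibit a pair that pairs nontrivially to conclude $2h\in L$. For sufficiency you explicitly defer ``the technical heart'' (the conjugation of the invariance identity by $\Delta(-h,z)$), so nothing is verified. The economical route --- and the one consistent with how the paper itself handles the shifted $\widehat{sl_2}$ example in Section 8.1 --- is Li's theorem as quoted in Section 3: for the simple VOA $V_{L,h}$ the space of invariant bilinear forms is $\bigl((V_{L,h})_0/L_h(1)(V_{L,h})_1\bigr)^*$, so self-duality is equivalent to $L_h(1)(V_{L,h})_1\neq (V_{L,h})_0$. Both implications then reduce to explicit computations with the states $u\otimes e^{\alpha}$; when $2h\in L$ the witness is $e^{2h}$, which has $L_h(0)$-weight $\tfrac{1}{2}(2h,2h)-(h,2h)=0$ and is shown not to lie in the image of $L_h(1)$ (it spans the socle $T$ of $(V_{L,h})_0$, cf.\ Section 8.2), while if $2h\notin L$ one shows $L_h(1)$ is onto in weight zero. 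I would recommend reworking the second half along these lines, or at minimum carrying out the $\Delta$-operator conjugation you defer.
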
 

\medskip
For the rest of this section, we assume that $0\not=h\in L^{\circ}$ and $2h\in L$,
so that $V_{L,h}$ is a self-dual, simple vertex operator algebra.\ Set 
$$Y(\omega_h,z)=\sum_{n\in\mathbb{Z}}L_h(n)z^{-n-2}.$$\ Then
\begin{eqnarray*}
L_h(0)(u\otimes e^{\alpha})&=&(n+\frac{1}{2}(\alpha,\alpha)-(h,\alpha))u\otimes e^{\alpha}\ (u\in M(1)_n).
\end{eqnarray*}
It follows that $V_{L, h}$ is $\mathbb{N}$-graded if, and only if,
the following condition holds:
\begin{eqnarray}\label{alphaineq}
(2h, \alpha) \leq (\alpha, \alpha)\ \ (\alpha\in L).
\end{eqnarray}

\medskip
From now on, we \emph{assume that (\ref{alphaineq}) is satisfied}.\ It is equivalent to the condition
$(\alpha-h, \alpha-h)\geq (-h, -h)$, i.e., $-h$ has the least (squared) length
among all elements in the coset $L-h$.\ Set
\begin{eqnarray*}
 A:=\{\alpha\in L\ | \ (\alpha, \alpha) = (2h, \alpha)\}.
 \end{eqnarray*}
 Note that $0, 2h\in A$.\ We have
\begin{eqnarray*}
(V_{L,h})_0&=&Span_{\mathbb{C}}\{e^{\alpha}~|~\alpha\in A\}.
\end{eqnarray*}
We want to understand the commutative algebra structure of $(V_{L, h})_1$, defined by
the $-1^{th}$ product $a(-1)b$.\ The identity element is $\mathbf{1}=e^{0}$.

\medskip
First note that if $\alpha, \beta\in A$ then
$(-h, -h)\leq (\alpha+\beta-h, \alpha+\beta-h)=(h, h)+2(\alpha, \beta)$ shows that 
$(\alpha, \beta)\geq 0$.\ Moreover, $(\alpha, \beta)=0$ if, and only if, $\alpha+\beta \in A$.
We employ standard notation for vertex operators in the lattice theory $V_L$ (\cite{LL}).
Then
\begin{eqnarray*}
e^{\alpha}(-1)e^{\beta}&=& Res_{z}z^{-1}E^-(-\alpha,z)E^+(-\alpha,z)e_{\alpha}z^{\alpha}\cdot e^{\beta}\\
&=&\epsilon(\alpha, \beta) Res_{z}z^{(\alpha,\beta)-1}E^-(-\alpha,z)E^+(-\alpha,z)e^{\alpha+\beta},
\end{eqnarray*}
where 
\begin{eqnarray*}
E^-(-\alpha, z)E^+(-\alpha, z)&=&exp\left\{-\sum_{n> 0}\frac{\alpha(-n)}{n}z^{n}\right\}
exp\left\{\sum_{n> 0}\frac{\alpha(n)}{n}z^{-n}\right\}.
\end{eqnarray*}
It follows that
\begin{eqnarray}\label{product}
e^{\alpha}(-1)e^{\beta} = \left\{ \begin{array}{cc}
\epsilon(\alpha, \beta)e^{\alpha+\beta}& \mbox{if $\alpha+\beta\in A$} \\
0 & \mbox{otherwise}
\end{array} \right.
\end{eqnarray}

\medskip
If $0\not=\alpha\in A$ then $(2h, \alpha)=(\alpha, \alpha)\not= 0.$ Thus $2h+\alpha \notin A$, and the last calculation shows that $e^{\alpha}(-1)e^{2h}=0$.\ It follows that $e^{2h}$ spans
the unique minimal ideal $T \subseteq (V_{L, h})_1$

\medskip
Recall (\cite{LL}) that $\epsilon: L\times L\rightarrow \left\{\pm 1\right\}$ is a 
(bilinear) $2$-cocycle satisfying $\epsilon(\alpha, \beta)\epsilon(\beta, \alpha)=(-1)^{(\alpha, \beta)}$.\
Thus we have proved
\begin{lemma} There are signs $\epsilon(\alpha, \beta)=\pm 1$ such that multiplication in
$(V_{L, h})_1$ is given by (\ref{product}). The minimal ideal $T$ is spanned by $e^{2h}$. $\hfill \Box$
\end{lemma}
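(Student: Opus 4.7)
The plan is to observe that nearly all of the work has already been done in the lead-up to the Lemma, so the proof amounts to assembling the ingredients and checking one remaining point about the ideal structure.

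First, the multiplication formula (\ref{product}) follows directly from the residue calculation displayed just before the Lemma. Specifically, in the expression
\[
 e^{\alpha}(-1)e^{\beta} = \epsilon(\alpha,\beta)\,\mathrm{Res}_{z}\,z^{(\alpha,\beta)-1}E^{-}(-\alpha,z)E^{+}(-\alpha,z)e^{\alpha+\beta},
\]
the operator $E^{-}(-\alpha,z)E^{+}(-\alpha,z)$ involves only nonnegative powers of $z$ when applied to $e^{\alpha+\beta}$ at weight zero (any $\alpha(n)$ with $n\geq 1$ acts as zero on $e^{\alpha+\beta}$, and the $z^{n}$-terms with $n\geq 1$ raise the conformal weight). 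Hence the residue is nonzero only if $(\alpha,\beta)=0$, in which case it equals $e^{\alpha+\beta}$. The previously-established inequality $(\alpha,\beta)\geq 0$ for $\alpha,\beta\in A$ with equality iff $\alpha+\beta\in A$ then yields exactly (\ref{product}). The signs $\epsilon(\alpha,\beta)$ are simply the cocycle values from the standard twisted group algebra presentation of $V_L$ in \cite{LL}; bilinearity of $\epsilon$ is what is needed to write the multiplication in the stated form.

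For the claim that $T=\mathbb{C}e^{2h}$, I would first verify that $\mathbb{C}e^{2h}$ is an ideal. For $\alpha=0$ it is clear that $\mathbf{1}(-1)e^{2h}=e^{2h}$. For $0\neq \alpha\in A$, the definition of $A$ and positive-definiteness of $(\,,\,)$ give $(2h,\alpha)=(\alpha,\alpha)>0$; consequently
\[
 (\alpha+2h,\alpha+2h) = (\alpha,\alpha)+2(2h,\alpha)+(2h,2h) \neq (2h,2h)+(2h,\alpha)=(2h,\alpha+2h),
\]
so $\alpha+2h\notin A$ and by (\ref{product}) we get $e^{\alpha}(-1)e^{2h}=0$. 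Thus $\mathbb{C}e^{2h}$ is preserved under multiplication by every generator $e^{\alpha}$ of $(V_{L,h})_0$, hence is an ideal.

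Finally, by the general results of Section~3 (applied to the simple, self-dual $\mathbb{N}$-graded VOA $V_{L,h}$), the weight-zero algebra $(V_{L,h})_0$ is a commutative local symmetric algebra with a unique one-dimensional minimal ideal $T$. The one-dimensional ideal $\mathbb{C}e^{2h}$ just constructed must therefore coincide with $T$. There is no real obstacle here; the only point that requires care is distinguishing whether the residue calculation yields the stated formula with the correct cocycle sign, and this is purely bookkeeping inside the standard lattice VOA construction.
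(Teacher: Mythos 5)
Your argument is correct and follows essentially the same route as the paper: the paper's ``proof'' is precisely the two preceding paragraphs (the residue computation giving (\ref{product}) via $(\alpha,\beta)\geq 0$ with equality iff $\alpha+\beta\in A$, and the observation that $(2h,\alpha)=(\alpha,\alpha)\neq 0$ forces $e^{\alpha}(-1)e^{2h}=0$ for $0\neq\alpha\in A$), followed by the appeal to uniqueness of the minimal ideal from Section 3. Your only divergence is the slightly more roundabout verification that $\alpha+2h\notin A$, which reduces to the same fact $(\alpha,2h)=(\alpha,\alpha)>0$ already used in the paper.
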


\end{document}